\definecolor{shadecolor}{gray}{0.875}
\newtheorem{thrm}{Theorem}[section]
\newtheorem{lem}[thrm]{Lemma}
\newtheorem{cor}[thrm]{Corollary}
\newtheorem{prop}[thrm]{Proposition}
\theoremstyle{definition}
\newtheorem{defn}[thrm]{Definition}
\newtheorem{exmple}[thrm]{Example}
\newtheorem{rmk}[thrm]{Remark}
\newtheorem{ques}[thrm]{Question}
\DeclareMathOperator{\Eff}{\overline{Eff}}
\DeclareMathOperator{\Nef}{Nef}
\DeclareMathOperator{\Hilb}{Hilb}
\DeclareMathOperator{\CH}{CH}
\DeclareMathOperator{\reldim}{reldim}
\def\O{\mathcal O}
\def\S{\mathcal S}
\def\ZZ{\mathbb Z}
\def\PP{{\mathbb P}}
\begin{document}

%

\title{Positivity of the diagonal}

\author{Brian Lehmann}
\address{Department of Mathematics \\
Boston College  \\
Chestnut Hill, MA \, \, 02467}
\email{lehmannb@bc.edu}

\author{John Christian Ottem}
\address{University of Oslo \\
Box 1053, Blindern, 0316 Oslo, Norway }
\email{johnco@math.uio.no}

\thispagestyle{empty}

\thispagestyle{empty}
\begin{abstract}
We study how the geometry of a projective variety $X$ is reflected in the positivity properties of the diagonal $\Delta_X$ considered as a cycle on $X\times X$. We analyze when the diagonal is big, when it is nef, and when it is rigid. In each case, we give several implications for the geometric properties of $X$. For example, when the cohomology class of $\Delta_X$ is big, we prove that the Hodge groups $H^{k,0}(X)$ vanish for $k>0$. We also classify varieties of low dimension where the diagonal is nef and big.
\end{abstract}
\maketitle
\vspace{-0.5cm}
\section{Introduction}
\thispagestyle{empty}

The geometry of a projective variety $X$ is determined by the positivity of the tangent bundle $T_{X}$. Motivated by the fact that $T_X$ is the normal bundle of the diagonal $\Delta_{X}$ in the self-product $X\times X$, we will in this paper study how the geometry of $X$ is reflected in the positivity properties of $\Delta_{X}$ itself, considered as a cycle on $X\times X$. The prototypical example of a variety with positive diagonal is projective space; the central theme of the paper is that positivity of the diagonal forces $X$ to be similar to projective space.
In dimension 1, this perspective is already quite vivid: $\PP^1$ is the only curve where the diagonal is an ample divisor; elliptic curves have nef, but not big diagonals; and for higher genus, the diagonal is contractible, hence `negative' in a very strong sense.

In general, when $X$ has dimension $n$, the diagonal determines a class in the space $N_n(X\times X)$ of $n$-dimensional cycles modulo numerical equivalence, and we are interested in how this class sits with respect to the various cones of positive cycles of $X\times X$.  Note that in the absence of the Hodge conjecture, we often do not even know the dimension of the space $N_n(X\times X)$.  Thus we develop techniques to prove positivity or rigidity without an explicit calculation of the positive cones. 

The subsections below recall several different types of positivity and give a number of theorems illustrating each.  At the end of the introduction we will collect several examples of particular interest.

\subsection*{Big diagonal}

A cycle class $\alpha$ is said to be {\em big} if it lies in the interior of the closed cone generated by classes of effective cycles.  Bigness is perhaps the most natural notion of positivity for cycles.  We will also call a cycle {\em homologically big} if it is homologically equivalent to the sum of an effective $\mathbb{Q}$-cycle and a complete intersection of ample $\mathbb{Q}$-divisors.  Homological bigness implies bigness, and equivalence of the two notions would follow from the standard conjectures.   

The primary example of a variety with (homologically) big diagonal is projective space. In this case, the diagonal has a K\"unneth decomposition of the form
$$
\Delta_{X}=\sum_{p+q=n} \pi_1^*h^p\cdot \pi_2^*h^q
$$where $h$ is the hyperplane divisor, and this class is evidently big. Of course, the same argument applies also for the {\em fake projective spaces}, that is, smooth varieties $\neq \PP^n$ with the same betti numbers as $\PP^n$. In dimension 2, there are exactly 100 such surfaces \cite{py07}, \cite{cs10}, and they are all of general type. Thus unlike the case of curves, we now allow examples with positive Kodaira dimension, but which are still `similar' to projective space in the sense that they have the same Hodge diamond. 

More generally, homological bigness of the diagonal implies the vanishing of the `outer' Hodge groups of $X$.  Following ideas of \cite{LieFu}, we show:

\begin{thrm} \label{bignessthrm}
Let $X$ be a smooth projective variety.  If $\Delta_{X}$ is homologically big, then $H^{i,0}(X) = 0$ for $i >0$.
\end{thrm}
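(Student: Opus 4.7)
The plan is to combine positivity of $(n,n)$-forms on $X\times X$ with the Lefschetz hyperplane theorem and the K\"unneth formula to force vanishing. Suppose for contradiction that $\omega\in H^{i,0}(X)$ is nonzero for some $0<i\le n$. Fix a K\"ahler form $h$ on $X\times X$, and consider the auxiliary form
$$\phi := c_i\cdot(\pi_1^*\omega-\pi_2^*\omega)\wedge\overline{(\pi_1^*\omega-\pi_2^*\omega)}\wedge h^{n-i}$$
on $X\times X$, where $c_i$ is the sign $\sqrt{-1}^{\,i^2}$ that makes Hodge--Riemann positivity hold. The form $\phi$ is real, $d$-closed, and strongly positive as an $(n,n)$-form; moreover it vanishes identically along $\Delta_X$ because $(\pi_1^*\omega-\pi_2^*\omega)|_{\Delta_X}=\omega-\omega=0$. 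Consequently $\int_{X\times X}\phi\wedge[\Delta_X]=0$.

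Now use homological bigness to write $[\Delta_X]\equiv [E]+[A_1\cdots A_n]$ in $H^{n,n}(X\times X,\mathbb{Q})$. After replacing each ample $A_k$ by a sufficiently divisible positive multiple and invoking Bertini, the complete intersection is represented by a smooth irreducible subvariety $Y\subset X\times X$ of dimension $n$ whose class is $M[A_1\cdots A_n]$ for some $M\in\mathbb{Z}_{>0}$. Thus $M[\Delta_X]\equiv M[E]+[Y]$, and pairing with $\phi$ yields
$$0 \;=\; M\int_{X\times X}\phi\wedge[\Delta_X] \;=\; \sum_k Mn_k\int_{E_k}\phi|_{E_k} \;+\; \int_Y\phi|_Y,$$
where $E=\sum_k n_k E_k$ with $n_k>0$. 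Each term on the right is nonnegative by strong positivity of $\phi$ (restriction to any complex-analytic $n$-dimensional subvariety remains $\ge 0$), so every term vanishes. In particular $\int_Y\phi|_Y=0$; by pointwise Hermitian positivity on the K\"ahler manifold $(Y,h|_Y)$ (the identity $c_i\,\eta\bar\eta(h|_Y)^{n-i}=C\,\|\eta\|^2(h|_Y)^n$ with $C>0$, valid for any $(i,0)$-form $\eta$ on $Y$), this forces $(\pi_1^*\omega-\pi_2^*\omega)|_Y=0$ pointwise on $Y$, hence in $H^{i,0}(Y)$.

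Iterating the Lefschetz hyperplane theorem down the chain of ample-divisor intersections $X\times X\supset A_1\supset A_1\cap A_2\supset\cdots\supset Y$ (cutting a smooth $2n$-fold by $n$ ample divisors), the restriction $H^{i,0}(X\times X)\to H^{i,0}(Y)$ is injective for every $i\le n$. Hence $\pi_1^*\omega=\pi_2^*\omega$ already in $H^{i,0}(X\times X)$, which by K\"unneth reads $\omega\otimes 1=1\otimes\omega$ inside $\bigoplus_{a+b=i}H^{a,0}(X)\otimes H^{b,0}(X)$. For $i>0$ the two sides occupy distinct K\"unneth summands, forcing $\omega=0$ and the desired contradiction. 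The main obstacle is producing a smooth representative $Y$ to which both the pointwise Hermitian positivity and the Lefschetz hyperplane theorem apply; this is handled by clearing denominators in the $\mathbb{Q}$-coefficients and a Bertini argument, while the nonnegativity of $\int_{E_k}\phi|_{E_k}$ for possibly singular $E_k$ follows by restriction to the smooth locus (or passage to a resolution) since $\phi$ is closed.
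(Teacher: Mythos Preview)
Your proof is correct and follows essentially the same idea as the paper: both construct the auxiliary form $\phi = c_i\,(\pi_1^*\omega - \pi_2^*\omega)\wedge\overline{(\pi_1^*\omega - \pi_2^*\omega)}\wedge h^{n-i}$, note that it vanishes on $\Delta_X$, and use Hodge--Riemann positivity on $n$-dimensional subvarieties to contradict homological bigness. The only difference is in the endgame. The paper observes directly that $\phi\cdot h^n > 0$: since $\pi_1^*\omega - \pi_2^*\omega$ is a nonzero holomorphic form on $X\times X$ (it vanishes at $(x,y)$ only when $\omega_x=\omega_y=0$), the pointwise identity you quote shows the integrand is nonnegative everywhere and strictly positive on a dense open set, so writing $[\Delta_X]=\epsilon h^n + [Z]$ with $Z$ effective gives $0 = \phi\cdot\Delta_X \geq \epsilon\,\phi\cdot h^n > 0$. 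You instead deduce $\int_Y \phi = 0$, hence $(\pi_1^*\omega-\pi_2^*\omega)|_Y=0$, and then appeal to Lefschetz and K\"unneth to reach the contradiction; this is a valid but slightly longer route encoding the same content. One terminological slip: for $i\geq 2$ the form $\phi$ is only \emph{weakly} positive, not strongly positive, but weak positivity is exactly what guarantees $\int_Z \phi \geq 0$ for subvarieties $Z$, so your argument is unaffected.
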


An interesting feature of this result is that the proof makes use of {\em non-algebraic} cohomology classes to control effective cycles. 
When $X$ is a surface with a big diagonal, Theorem \ref{bignessthrm} implies the existence of a cohomological decomposition of the diagonal; we discuss this relationship in more depth in Section \ref{blochconjsec}.

\begin{exmple}
Let $X$ denote the blow-up of $\mathbb{P}^{3}$ along a planar elliptic curve which does not admit complex multiplication.  In Example \ref{blowupelliptic} we verify that $\Delta_{X}$ is big even though $h^{2,1}(X) \neq 0$.  Thus the vanishing results for Hodge groups as in Theorem \ref{bignessthrm} are optimal for threefolds.
\end{exmple}

We emphasize that even amongst varieties satisfying the hypotheses of Theorem \ref{bignessthrm} there are very few with big diagonal.  We will prove several additional strong constraints on the geometry of a variety with big diagonal. For example, such varieties can not admit a morphism to variety with smaller dimension.  Nevertheless, the complete classification of varieties with big diagonal seems subtle (see Section \ref{questions}).

\subsection{Big and nef diagonal}
A cycle class is said to be {\em nef} if it has non-negative intersection against every subvariety of the complementary dimension.  Diagonals which are both big and nef are positive in the strongest possible sense, and we classify such varieties in low dimensions.

\begin{thrm}
Let $X$ be a smooth projective variety.
\begin{itemize}
\item If $\dim X=2$ and $\Delta$ is nef and big then $X$ has the same rational cohomology as $\mathbb{P}^{2}$: it is either $\mathbb{P}^{2}$ or a fake projective plane.
\item If $\dim X=3$ and $\Delta$ is nef and homologically big then $X$ has the same rational cohomology as $\mathbb{P}^{3}$: it is either $\mathbb{P}^{3}$, a quadric, a del Pezzo quintic threefold $V_{5}$, or the Fano threefold $V_{22}$.
\end{itemize}
\end{thrm}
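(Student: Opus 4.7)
The plan is to combine the Hodge-theoretic vanishing of Theorem \ref{bignessthrm} with a Hodge-index argument controlling the Picard number, and then to appeal to standard classification results.

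First, apply Theorem \ref{bignessthrm} to conclude $H^{i,0}(X)=0$ for all $i>0$. This already pins down most of the Hodge diamond: in dimension $2$ it gives $b_{1}=b_{3}=0$, leaving only $h^{1,1}$ to be determined; in dimension $3$ it gives $b_{1}=b_{5}=0$, and one must still control $h^{1,1}$ and $h^{2,1}$.

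The heart of the proof is to show that $\rho(X)=1$. Suppose $\rho(X)\geq 2$. Fix an ample divisor $H$ on $X$ and, by the Hodge index theorem, find $v\in H^{\perp}\subset N^{1}(X)_{\mathbb{R}}$ with $v^{2}\cdot H^{n-2}<0$. The identity $\Delta\cdot (C\times C')=C\cdot C'$ converts effective test cycles on $X\times X$ into intersection numbers on $X$; the goal is to build an effective cycle on $X\times X$ whose intersection with $\Delta$ equals the negative number $v^{2}\cdot H^{n-2}$. Writing $v=[D_{1}]-[D_{2}]$ with $D_{i}$ effective $\mathbb{Q}$-divisors, one assembles a positive combination of the products $D_{i}\times D_{j}$ and ample classes pulled back from the factors. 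The hard part is canceling the non-effective cross terms, and this step will exploit homological bigness of $\Delta$ in the form $[\Delta]=A_{1}\cdot A_{2}+[E]$ with $A_{i}$ ample and $E$ effective; the effective part $E$ should absorb the non-effective contributions and produce a contradiction with nefness. For threefolds, a parallel argument applied to the K\"unneth component of $[\Delta]$ in $H^{3}(X)\otimes H^{3}(X)$, using the Hodge--Riemann bilinear relations on the primitive part of $H^{3}(X)$, should force $h^{2,1}(X)=0$.

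Once the rational cohomology of $X$ matches that of projective space, classification takes over. In dimension $2$, the Hodge numbers force $c_{1}^{2}=9$ and $c_{2}=3$, realizing equality in the Miyaoka--Yau inequality; by Yau's theorem, $X$ is either $\mathbb{P}^{2}$ (when $-K_{X}$ is ample) or a ball quotient with the same invariants, i.e., a fake projective plane. In dimension $3$, homological bigness of $\Delta$ forces $X$ to be rationally connected, so in particular $h^{0}(mK_{X})=0$ for $m>0$; combined with $\rho(X)=1$, the class $-K_{X}$ is then a positive multiple of the ample generator, so $X$ is Fano. By Iskovskikh's classification, the smooth Fano threefolds with $\rho=1$ and $b_{3}=0$ are exactly $\mathbb{P}^{3}$, the quadric $Q_{3}$, the del Pezzo quintic $V_{5}$, and the Mukai--Umemura threefold $V_{22}$, which gives the stated list.
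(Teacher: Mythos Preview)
Your outline has several genuine gaps, and the paper's argument proceeds quite differently.

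\textbf{Picard number one.} Your Hodge-index approach is not actually carried out: you acknowledge that ``the hard part is canceling the non-effective cross terms'' but give no mechanism for doing so, and it is not clear how the decomposition $[\Delta]=A_1\cdot A_2+[E]$ would absorb them. The paper's route is short and uses the two hypotheses separately. Nefness of $\Delta_X$ implies (Proposition~\ref{nefimpliesnefcycles}) that every pseudoeffective divisor on $X$ is nef; bigness of $\Delta_X$ implies (Lemma~\ref{bigimpliesbigupsef}) that every nef divisor on $X$ is big. Together these force $\Eff^1(X)=\Nef^1(X)$ with no nonzero boundary classes, hence $N^1(X)\cong\mathbb{R}$ (Corollary~\ref{coro:upsefrank}). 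This handles both the surface and the threefold case uniformly.

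\textbf{Ruling out $\kappa(X)\geq 0$ in dimension three.} Your assertion that ``homological bigness of $\Delta$ forces $X$ to be rationally connected'' is unjustified, and is false as a general principle: fake projective planes have homologically big diagonal but are of general type. Knowing $h^{i,0}=0$ and $\rho=1$ still leaves open the possibility that $K_X$ is ample. The paper excludes this by a Riemann--Roch and Miyaoka--Yau computation: $\chi(\mathcal{O}_X)=1$ gives $c_1c_2=24$, while for a minimal threefold of general type $-K_X^3=c_1^3\geq \tfrac{8}{3}c_1c_2=64$, a contradiction.

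\textbf{Controlling $h^{2,1}$.} A ``parallel'' Hodge--Riemann argument on $H^{2,1}$ cannot work from homological bigness alone: Example~\ref{blowupelliptic} exhibits a threefold with homologically big diagonal and $h^{2,1}\neq 0$. The mechanism of Theorem~\ref{qpg} relies on $(k,0)$-forms being automatically primitive after restriction to any subvariety, which fails for $(2,1)$-classes. The paper instead uses nefness directly: since $\Delta_X$ is nef and homologically big, $\Delta_X^2=c_3(X)=\chi_{\mathrm{top}}(X)\geq 1$; with $\rho=1$ and $h^{i,0}=0$ this reads $4-2h^{2,1}\geq 1$, so $h^{2,1}\leq 1$, and the Iskovskikh classification of Fano threefolds with $\rho=1$ finishes the argument.
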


It is interesting to compare this result to Mori's theorem that the only smooth variety with ample tangent bundle is $\mathbb{P}^{n}$.  By switching to the perspective of numerical positivity of $\Delta_{X}$, we also include varieties with the same cohomological properties as projective space.

In higher dimensions we make partial progress toward a classification.  In particular, we show that
$N_{k}(X) \cong \mathbb{R}$ for every $0 \leq k \leq \dim X$, provided that the diagonal is big and {\em universally pseudoeffective} (this is a stronger condition than nefness, in the sense that $\pi^{*}\Delta_{X}$ is required to be pseudoeffective for {\em every} morphism $\pi: Y \to X\times X$).

\subsection*{Dual positivity}

We also study nefness or universal pseudoeffectiveness in the absence of bigness.

\begin{thrm} \label{dualposdiag}
Let $X$ be a smooth projective variety.  If $\Delta_{X}$ is nef (resp.~universally pseudoeffective) then every pseudoeffective class on $X$ is nef (resp.~universally pseudoeffective).
\end{thrm}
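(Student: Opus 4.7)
Both parts rest on the identity
\[
\beta \cdot \gamma \;=\; (\pi_1^*\beta \cdot \pi_2^*\gamma)\cdot[\Delta_X] \;=\; (\beta\times\gamma)\cdot[\Delta_X]
\]
expressing the intersection of complementary-dimensional classes $\beta,\gamma$ on $X$ as an intersection with the diagonal on $X\times X$, where $\pi_1,\pi_2:X\times X\to X$ are the two projections and $\beta\times\gamma$ denotes the external product. The elementary observation driving the argument is that external products preserve pseudoeffectivity: if $\beta = \lim \beta_n$ with $\beta_n$ effective on $X$ and $\gamma$ is effective, then $\beta_n\times\gamma$ is effective on $X\times X$, so $\beta\times\gamma$ lies in the closure of the effective cone on $X \times X$.

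For the nef case, let $\alpha$ be pseudoeffective on $X$ and $V\subset X$ an irreducible subvariety of complementary dimension. The external product $\alpha\times[V]$ is pseudoeffective on $X\times X$ of dimension complementary to $[\Delta_X]$, so nefness of $[\Delta_X]$ gives
\[
\alpha\cdot V \;=\; (\alpha\times[V])\cdot[\Delta_X] \;\geq\; 0,
\]
which is exactly the condition for $\alpha$ to be nef.

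For the universally pseudoeffective case, fix a morphism $f:Y\to X$ and let $g := f\times\mathrm{id}_X: Y\times X\to X\times X$. Flat base change applied to the Cartesian square
\[
\xymatrix{
Y\times X \ar[r]^{g} \ar[d]_{\pi_Y} & X\times X \ar[d]^{\pi_1} \\
Y \ar[r]^{f} & X
}
\]
together with the identity $\alpha = (\pi_1)_*(\pi_2^*\alpha\cdot[\Delta_X])$ yields
\[
f^*\alpha \;=\; (\pi_Y)_*\!\bigl(\pi_X^*\alpha\cdot g^*[\Delta_X]\bigr)
\]
on $Y\times X$, where $\pi_X$ denotes the projection to $X$. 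Here $\pi_X^*\alpha$ is pseudoeffective as a flat pullback, and $g^*[\Delta_X]$ equals the effective graph cycle $[\Gamma_f]$ by an excess intersection computation (the excess bundle for the graph embedding vanishes). Once the intersection $\pi_X^*\alpha\cdot g^*[\Delta_X]$ is shown to be pseudoeffective on $Y\times X$, proper pushforward under $\pi_Y$ preserves pseudoeffectivity and delivers the conclusion.

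The main obstacle is establishing this pseudoeffectivity, since in general the product of two pseudoeffective cycle classes is not pseudoeffective. The plan would be to approximate $\alpha = \lim\alpha_n$ by effective cycles on $X$ and deform each $\pi_X^*\alpha_n = Y\times\alpha_n$ within its rational equivalence class on $Y\times X$ into general position with respect to $\Gamma_f$, so that the intersection is proper and therefore represented by an effective cycle; one then takes limits. Exploiting the universal pseudoeffectivity of $\Delta_X$, which controls pullbacks of the diagonal under all base changes into $X\times X$ (not only for the specific $g$ above), should be the mechanism that allows this limiting procedure to remain inside the pseudoeffective cone.
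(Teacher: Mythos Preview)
Your argument for the nef case is correct and is essentially a dual reformulation of the paper's proof. The paper restricts $\Delta_X$ to $Z' = \pi_1^{-1}(Z) = Z \times X$, notes that nefness is preserved under this pullback, then pushes forward along the flat map $\pi_2|_{Z'}$ to recover $[Z]$ as a nef class on $X$. You instead pair directly: $\alpha \times [V]$ is pseudoeffective, $\Delta_X$ is nef, done. Both are fine; yours is marginally quicker here.

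The universal pseudoeffective case, however, has a real gap, and you have correctly located it yourself. Your formula $f^*\alpha = (\pi_Y)_*\bigl(\pi_X^*\alpha \cdot g^*[\Delta_X]\bigr)$ with $g^*[\Delta_X] = [\Gamma_f]$ is fine, but observe that $[\Gamma_f]$ is an honest effective cycle regardless of any hypothesis on $\Delta_X$; so at this point you have not yet used that $\Delta_X$ is universally pseudoeffective. The suggestion to deform $Y \times \alpha_n$ into general position with $\Gamma_f$ is not available in general: the cycles $Y \times Z_i$ can be rigid in $Y \times X$, and the improper locus of the intersection with $\Gamma_f$ is exactly $f^{-1}(Z_i)$, which is precisely what you are trying to control. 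The final sentence gestures toward using pullbacks of $\Delta_X$ along other morphisms, but no concrete mechanism is given.

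The paper sidesteps this entirely by running the \emph{same} argument as in the nef case, not a new one. The point is that universal pseudoeffectiveness, like nefness, is preserved by pullback (tautologically) and by flat pushforward onto a smooth target (this is the input from \cite{fl14}). So one reduces to an irreducible cycle $Z$ on $X$, pulls $\Delta_X$ back to $Z' = Z \times X$ (still universally pseudoeffective), and pushes forward along the flat projection $\pi_2|_{Z'}: Z' \to X$ to obtain $[Z]$, which is therefore universally pseudoeffective. Passing to limits handles arbitrary pseudoeffective classes. The moral: rather than fixing $f$ and trying to show $f^*\alpha$ is pseudoeffective, one should realize each effective class on $X$ as the image of $\Delta_X$ under operations that preserve the positivity notion in question.
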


For example, a surface with nef diagonal must be a minimal surface.

\begin{exmple}
If $X$ has a nef tangent bundle, then $\Delta_{X}$ is nef.  Campana and Peternell predict that any Fano manifold with nef tangent bundle is in fact rational homogeneous.  Note that Theorem \ref{dualposdiag} is compatible with this conjecture: on a homogeneous variety every pseudoeffective class must be nef.

While varieties with nef tangent bundle will have nef diagonal, the converse is not true; for example, fake projective planes have anti-ample tangent bundle but their diagonals are universally pseudoeffective.\end{exmple}

It is interesting to look for other sources of feedback between nefness of the diagonal and nefness of the tangent bundle.  For example:

\begin{thrm} \label{nefkoddim}
Let $S$ be a smooth surface of Kodaira dimension $\leq 1$ whose diagonal is nef.  Then $T_{S}$ is a nef vector bundle, except possibly when $S$ is a minimal properly elliptic surface with no section.
\end{thrm}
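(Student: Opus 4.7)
The plan is to use Theorem \ref{dualposdiag} to reduce to a short list of possibilities via the Enriques--Kodaira classification, and then verify the conclusion in each case.

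By Theorem \ref{dualposdiag}, nefness of $\Delta_{S}$ implies that every pseudoeffective class on $S$ is nef. In particular $S$ has no negative curves, so $S$ is minimal, and the nef and pseudoeffective cones of $S$ coincide. I then split on the Kodaira dimension. For $\kappa(S)=-\infty$, minimality together with the absence of negative curves forces $S$ to be $\mathbb{P}^{2}$, $\mathbb{P}^{1}\times\mathbb{P}^{1}$, or a ruled surface $\mathbb{P}(E)\to C$ with $g(C)\ge 1$ and $E$ semistable: Hirzebruch surfaces $\mathbb{F}_{n}$ with $n\ge 1$, ruled surfaces with unstable $E$, and ruled surfaces over curves of genus $g\ge 2$ all produce pseudoeffective classes that are not nef (a negative section in the first two cases, and a pullback of the non-nef tangent of $C$ in the third). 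In each remaining case $T_{S}$ is nef by a direct examination of the tangent sequence.

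For $\kappa(S)=0$, abelian and bielliptic surfaces have numerically flat and hence nef tangent bundle. The K3 and Enriques cases are ruled out by the following argument: $c_{1}(T_{S})$ is numerically trivial, so if $T_{S}$ were nef, then by the Demailly--Peternell--Schneider structure theorem $T_{S}$ would be numerically flat, forcing $c_{2}(T_{S})=0$, contradicting $c_{2}=24$ or $12$. Hence $T_{S}$ is not nef in these cases, and one must then verify directly that $\Delta_{S}$ cannot be nef on such surfaces; I would attempt this by exhibiting explicit 2-cycles on $S\times S$ (for instance from self-correspondences attached to the transcendental lattice in the K3 case, or from the covering involution in the Enriques case) with negative intersection against $\Delta_{S}$.

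For $\kappa(S)=1$, let $\pi : S \to C$ be the relatively minimal elliptic fibration. Since $\pi^{*}T_{C}$ is a quotient of $T_{S}$, the pseudoeffective$=$nef hypothesis forces $g(C)\le 1$, so $T_{C}$ is nef. If $\pi$ admits a section $s : C \to S$, then $s(C)^{2}\ge 0$, and pulling back the tangent sequence $0\to T_{S/C}\to T_{S}\to \pi^{*}T_{C}\to 0$ along $s$, combined with a case analysis of singular fibers of $\pi$, lets me conclude that both $T_{S/C}$ and $\pi^{*}T_{C}$ are nef, so $T_{S}$ is nef. In the absence of a section this propagation breaks down along multisections and multiple fibers, which is why the statement allows this exception. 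The main obstacle will be this $\kappa=1$ case with a section: propagating nefness of $T_{S/C}$ globally from a section, in the presence of singular and multiple fibers of $\pi$, is where the section hypothesis is essential. The K3 and Enriques subcase also requires an explicit construction of 2-cycles violating nefness, which is less routine than the purely numerical arguments used elsewhere.
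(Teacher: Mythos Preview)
Your overall strategy (classification plus Theorem~\ref{dualposdiag}) matches the paper's, but there are two genuine gaps.

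\textbf{Kodaira dimension $1$ with a section.} You aim to prove that $T_{S}$ is nef in this case, but that is impossible: for any minimal surface with $\kappa(S)=1$ the canonical class $K_{S}$ is nef and not numerically trivial, so $-K_{S}=c_{1}(T_{S})$ is not nef and hence $T_{S}$ cannot be nef. The theorem is therefore asserting that this case is \emph{vacuous}: a properly elliptic surface with a section never has nef diagonal. The paper's argument is a one-liner you already have the ingredients for. You correctly observe that nefness of $\Delta_{S}$ forces $s(C)^{2}\ge 0$; now compute $s(C)^{2}$ by adjunction. Since $K_{S}$ is a positive rational multiple of a fiber and $s(C)\cdot F=1$, we get $K_{S}\cdot s(C)>0$, and $2g(C)-2 = K_{S}\cdot s(C) + s(C)^{2}$ with $g(C)\le 1$ gives $s(C)^{2}<0$, a contradiction. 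Your proposed route through the tangent sequence and a ``case analysis of singular fibers'' is heading towards a false conclusion.

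\textbf{Ruled surfaces over curves of genus $\ge 2$.} You rule these out by claiming they always carry a pseudoeffective class that is not nef. This is false: if $E$ is a stable rank~$2$ bundle on a curve $C$ of genus $\ge 2$ (and these exist), then on $\mathbb{P}(E)$ the nef and pseudoeffective cones of divisors coincide. So Theorem~\ref{dualposdiag} alone does not exclude these surfaces. The paper instead uses Lemma~\ref{neflowgenus}: any surface admitting a surjection to a curve of genus $\ge 2$ has $\Delta_{S}$ not nef, because $(\pi\times\pi)^{*}\Delta_{C}$ is an effective class with $\Delta_{C}^{2}<0$. You need this extra input; ``pullback of the non-nef tangent of $C$'' does not produce a pseudoeffective-but-not-nef class on $S$.

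For K3 and Enriques surfaces your plan is only a sketch, and in both cases the actual argument is more specific than what you indicate. For Enriques surfaces the relevant involution is not the K3 covering involution but the deck involution of a $2{:}1$ map $S\to\mathbb{P}^{2}$ given by an ample $D$ with $D^{2}=2$; its graph $\Gamma$ satisfies $\Delta_{S}\cdot\Gamma<0$. For K3 surfaces the paper uses the birational geometry of $\Hilb^{2}(S)$ (via Bayer--Macr{\`\i}) to produce movable divisors $H^{[2]}-bB'$ with $b$ large enough, and then a direct intersection computation on the blow-up of $\Delta_{S}$; a vague appeal to ``self-correspondences attached to the transcendental lattice'' is not a substitute.
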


The exception is necessary: Example \ref{hyperellipticnefexample} constructs a hyperelliptic surface with no section which has nef diagonal.  Also, the natural extension to general type surfaces is false: a fake projective plane has nef diagonal.

\subsection*{Examples}

\begin{exmple}[Toric varieties]
Let $X$ be a smooth toric variety.  Theorem \ref{goodproducteff} shows that $\Delta_{X}$ is big if and only if every nef cycle on $X$ is big.  One might expect that the only toric varieties with big diagonal are the projective spaces, but this turns out not to be the case.  For example, \cite{fs09} gives an example of a toric threefold of Picard rank $5$ with big diagonal.

By combining our work with results of \cite{fs09} we can classify toric varieties with nef diagonal:

\begin{prop}
Let $X$ be a smooth projective toric variety.  Then $\Delta_{X}$ is nef if and only if $X$ is a product of projective spaces.
\end{prop}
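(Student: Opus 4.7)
The plan is to prove each direction separately, using Theorem \ref{dualposdiag} as the bridge to a toric classification from \cite{fs09}.

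For the ``if'' direction, assume $X = \mathbb{P}^{n_{1}} \times \cdots \times \mathbb{P}^{n_{r}}$ and identify $X \times X$ with $\prod_{i}(\mathbb{P}^{n_{i}} \times \mathbb{P}^{n_{i}})$. Under this identification $[\Delta_{X}]$ is the product of the classes $[\Delta_{\mathbb{P}^{n_{i}}}]$ pulled back from each factor. By the Künneth decomposition already recorded in the introduction, each $[\Delta_{\mathbb{P}^{n_{i}}}] = \sum_{p+q=n_{i}} \pi_{1}^{*}h^{p}\cdot \pi_{2}^{*}h^{q}$ is a sum of products of nef (indeed ample) divisor classes, and therefore nef. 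Since products, sums, and pullbacks of nef classes remain nef, $[\Delta_{X}]$ is nef.

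For the ``only if'' direction, assume $\Delta_{X}$ is nef. Theorem \ref{dualposdiag} then forces every pseudoeffective class on $X$ to be nef; in particular, the pseudoeffective and nef cones of divisors (equivalently, of curves) on the smooth projective toric variety $X$ coincide. On a toric variety the pseudoeffective divisor cone is generated by torus-invariant prime divisors and the Mori cone by torus-invariant curves, so this equality becomes the combinatorial condition that every intersection number between an invariant prime divisor and an invariant curve is nonnegative. This is precisely the hypothesis from which \cite{fs09} concludes that the fan of $X$ is a product of simplex fans, i.e.\ that $X$ is a product of projective spaces.

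The main obstacle is invoking \cite{fs09} with the correct hypothesis: one must check that the condition supplied by Theorem \ref{dualposdiag} matches the one used there. There is some slack in our favor, since Theorem \ref{dualposdiag} actually yields $\Eff^{k}(X) = \Nef^{k}(X)$ in every codimension, not only for divisors. Whichever formulation \cite{fs09} uses, the hypothesis is therefore available, and the plan goes through.
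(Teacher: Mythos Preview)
Your approach matches the paper's: the paper also deduces the ``only if'' direction by combining Proposition \ref{nefimpliesnefcycles} (which is Theorem \ref{dualposdiag} in the introduction) with \cite[Proposition 5.3]{fs09}, and the ``if'' direction is immediate. Two remarks are in order.

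First, in the ``if'' direction you invoke the principle that ``products of nef classes remain nef.'' As a general fact this is false, and the paper itself flags a counterexample from \cite{delv11}. Your conclusion is nevertheless correct here, because after fully expanding $\prod_{i} [\Delta_{\mathbb{P}^{n_{i}}}]$ every term is a complete intersection of pullbacks of ample divisors on $X \times X$, and complete intersections of nef divisors are nef (indeed universally pseudoeffective). Alternatively, and more cheaply, a product of projective spaces has nef tangent bundle, so $\Delta_{X}$ is nef. You should replace the appeal to the false general principle with one of these justifications.

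Second, regarding \cite{fs09}: the paper cites \cite[Proposition 5.3]{fs09} in the form ``the only smooth toric varieties for which the pseudoeffective and nef cones coincide for all $k$ are products of projective spaces.'' So it is the all-codimension hypothesis that is used, not merely the divisor case. Your final paragraph already anticipates this and correctly observes that Theorem \ref{dualposdiag} supplies $\Eff^{k}(X) = \Nef^{k}(X)$ for every $k$, so the argument goes through; but the middle paragraph, which asserts that the divisor condition alone is ``precisely the hypothesis'' of \cite{fs09}, should be dropped or corrected.
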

\end{exmple}

\begin{exmple}[Hypersurfaces]
Let $X$ be a smooth hypersurface of degree $\ge 3$ and dimension $\geq 2$.  It is easy to see that the diagonal of $X$ is not nef.  For bigness, we show:

\begin{thrm}
For a smooth Fano hypersurface of degree $\ge 3$ and dimension $\le 5$, the diagonal is not big.
\end{thrm}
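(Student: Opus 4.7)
The plan is to argue by contradiction: assuming $[\Delta_X]$ is big, I would derive a cohomological and intersection-theoretic obstruction that contradicts the known structure of Fano hypersurfaces of low dimension.

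Setup. Let $n = \dim X$ and $d = \deg X$, so $3 \le d \le n+1$ and $2 \le n \le 5$. By the Lefschetz hyperplane theorem, $H^{2k}(X; \mathbb{Q}) = \mathbb{Q} \cdot h^k$ for $2k \neq n$, while $H^n(X; \mathbb{Q})$ contains a large primitive part $H^n_{\mathrm{prim}}(X;\mathbb{Q})$ because $d\ge 3$.  The Künneth decomposition of $[\Delta_X]$ in $H^{2n}(X\times X;\mathbb{Q})$ takes the form
\[
[\Delta_X] \;=\; \frac{1}{d}\sum_{k=0}^{n} h_1^{k}\, h_2^{n-k} \;+\; P,
\]
where $P \in H^n_{\mathrm{prim}}(X) \otimes H^n_{\mathrm{prim}}(X)$ is the primitive middle component.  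Because $d \ge 3$ the primitive middle cohomology is nonzero and has nontrivial Hodge decomposition (e.g.\ $h^{2,1}_{\mathrm{prim}}=5$ for the cubic threefold, $h^{3,1}_{\mathrm{prim}} = 1$ for the cubic fourfold, and $h^{2,1}_{\mathrm{prim}}=30$ for the quartic threefold).

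Strategy. The goal is to exhibit a nef (or universally pseudoeffective) class $\eta$ of dimension $n$ on $X \times X$ with $[\Delta_X] \cdot \eta \le 0$, placing $[\Delta_X]$ on the boundary of $\overline{\mathrm{Eff}}_n(X \times X)$.  I would construct $\eta$ from two complementary sources.  First, since $d\le n+1$ the Fano variety of lines $F_1(X)$ is non-empty of dimension $2n-d-1$, and the associated ``double incidence'' variety $W = \{(x,y)\in X\times X : x,y \text{ lie on a common line of } X\}$ is an effective cycle whose class can be computed via the universal line.  Intersecting $W$ against ample classes on $X\times X$ and projecting gives explicit effective $n$-cycles, whose Künneth decomposition one can compare to that of $[\Delta_X]$.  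Second, correspondence classes built from Hodge classes in the primitive middle cohomology supply candidate nef test classes, to be intersected against $P$ via the Lefschetz trace formula.  The aim is to show that in each case the algebraic classes generated by these constructions, together with the complete-intersection classes $h_1^a h_2^{n-a}$, span a cone that has $[\Delta_X]$ on a proper face.

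Main obstacle.  The hard step is verifying nefness (or universal pseudoeffectivity) of the constructed obstruction class $\eta$.  For higher-codimension cycles, nefness is considerably more subtle than for divisors, since one cannot use a Nakai--Moishezon-type criterion.  The bound $\dim X \le 5$ is essential on two fronts: it keeps the case list finite (finitely many pairs $(n,d)$ with $3 \le d \le n+1 \le 6$), and it keeps both the Hodge structure on $H^n_{\mathrm{prim}}(X)$ and the Fano variety of lines simple enough that the relevant intersection numbers can be computed explicitly.  For the borderline cases ($d = 3$ with $n$ large, where $F_1(X)$ has high dimension and the primitive cohomology is most complex) I expect that one must combine the line construction with further input from correspondences coming from the Voisin rational self-map, or from the tools developed earlier in the paper (mobility bounds, intersections with universally pseudoeffective classes) to rule out bigness without pinning down $\eta$ directly.
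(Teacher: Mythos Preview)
Your high-level strategy---produce a nef $n$-class $\eta$ on $X\times X$ with $\eta\cdot\Delta_X\le 0$---matches the paper's, and your instinct to use the lines on $X$ is on target.  But there is a real gap: you have not said how you would certify nefness of your candidate $\eta$, and your two proposed sources do not close this.  The Hodge-theoretic source is a dead end for Fano hypersurfaces: the Hodge--Riemann argument (Theorem~\ref{qpg}) requires a nonzero class in $H^{k,0}(X)$, but $h^{k,0}(X)=0$ for all $k>0$ when $d\le n+1$, and primitive middle classes of mixed type $(p,q)$ with $p,q>0$ do not yield nef test forms by that mechanism.  The ``double incidence'' variety is the right geometric object, but working directly on $X\times X$ leaves you with exactly the nefness verification you flag as the obstacle.

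The paper's device that you are missing is to pass to the blow-up $\phi:W\to X\times X$ along $\Delta_X$.  This resolves the rational map ``pair of points $\mapsto$ spanned line'' into a genuine morphism $g:W\to Gr(2,n+2)$, and on a Grassmannian every Schubert class is nef (indeed universally pseudoeffective).  Thus $g^{*}\sigma_\lambda$ is automatically nef on $W$, with no case-by-case verification required.  Proposition~\ref{blowupcomputation} then reduces the theorem to a linear-algebra problem: for each $(n,d)$ with $3\le d\le n+1\le 6$, find $\alpha\in N_n(X\times X)$ (a $\mathbb{Q}$-combination of $\Delta_X$ and the $H_1^aH_2^{n-a}$) such that $\phi^{*}\alpha$ is a non-negative combination of pulled-back Schubert classes and nef classes supported on the exceptional divisor, and $\alpha\cdot\Delta_X\le 0$.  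The paper simply tabulates such an $\alpha$ in each of the nine cases.  Your proposal would become a proof once you incorporate this blow-up step and the Grassmannian morphism; without it the nefness of $\eta$ remains unestablished.
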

For a quadric hypersurface, $\Delta_X$ is big if and only if the dimension is odd, in which case it is a fake projective space (see Section \ref{quadricsection}).

\end{exmple}

\begin{exmple}[K3 surfaces]
By Theorem \ref{bignessthrm} the diagonal of a K3 surface is not big.  We prove the diagonal of a K3 surface is never nef by using the birational geometry of $\Hilb^{2}(X)$ as described by \cite{bm14}.  For general K3 surfaces we can say more: using a deformation argument we show

\begin{thrm}
For a very general K3 surface, the diagonal is the unique effective $\mathbb{R}$-cycle in its numerical class and it lies on an extremal ray of the pseudoeffective cone.
\end{thrm}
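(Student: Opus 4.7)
The plan is to first identify the Hodge classes in $H^{4}(X\times X,\mathbb{Q})$ for very general $X$, then classify irreducible effective $2$-cycles on $X\times X$, and finally use a deformation argument to rule out non-diagonal effective representatives of $[\Delta_{X}]$.

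For a very general K3 surface $X$ one has $\Pic(X)=\ZZ H$ and the transcendental lattice $T(X)$ is a simple rational Hodge structure with $\mathrm{End}_{\mathrm{Hdg}}(T(X))=\mathbb{Q}$. A K\"unneth decomposition of $H^{4}(X\times X,\mathbb{Q})$, combined with these facts, shows that the space of Hodge classes is four-dimensional, spanned by $e_{1}=[X\times \mathrm{pt}]$, $e_{2}=[\mathrm{pt}\times X]$, $e_{3}=[H\times H]$, and a ``transcendental diagonal'' class $e_{4}$ (the image of the identity endomorphism of $T(X)$ under Poincar\'e duality). In this basis $[\Delta_{X}]=e_{1}+e_{2}+(H^{2})^{-1}e_{3}+e_{4}$, and $N_{2}(X\times X)_{\mathbb{R}}$ has dimension at most four. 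I would then classify irreducible two-dimensional subvarieties $Z\subset X\times X$ by the dimensions of their projections: if one projection is constant then $Z$ is a fiber $\{x\}\times X$ or $X\times\{x\}$, of class $e_{1}$ or $e_{2}$; if both projections have one-dimensional image then $Z=C_{1}\times C_{2}$ with $C_{i}\in|m_{i}H|$ by the Picard-rank one hypothesis, so $[Z]$ is proportional to $e_{3}$; otherwise $Z$ dominates both factors and is a correspondence. For a correspondence, the action on $T(X)$ is a morphism of Hodge structures and hence a rational multiple of the identity, so $[Z]$ equals a combination of $e_{1},e_{2},e_{3}$ plus a rational multiple of $e_{4}$, and only cycles of this third kind can contribute to the $e_{4}$-coordinate.

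The rigidity step is to show that the only effective $\mathbb{R}$-cycle with class $[\Delta_{X}]$ is $\Delta_{X}$ itself. I would prove this by a deformation/countability argument: over a versal family $\mathcal{X}\to B$ of K3 surfaces, the relative Chow scheme parameterizing effective $2$-cycles on $\mathcal{X}\times_{B}\mathcal{X}$ in the numerical class of the relative diagonal has the diagonal as a tautological component. Any other component either fails to dominate $B$ (and is therefore avoided over very general $b\in B$) or dominates $B$, producing a moving family of non-trivial effective self-correspondences of very general K3 surfaces whose members act as the identity on cohomology. The latter possibility is ruled out by combining $\mathrm{End}_{\mathrm{Hdg}}(T(X))=\mathbb{Q}$ with the rigidity of the diagonal in cohomological families of very general K3 surfaces.

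Given uniqueness, extremality of the ray $\mathbb{R}_{\geq 0}[\Delta_{X}]$ in the pseudoeffective cone follows from Chow compactness of cycles of bounded class: if $[\Delta_{X}]=\alpha+\beta$ with $\alpha,\beta$ pseudoeffective, I would approximate each by effective $\mathbb{R}$-cycles and extract a Chow limit of the sums, which is an effective cycle in the class $[\Delta_{X}]$ and hence equal to $\Delta_{X}$ by uniqueness; since $\Delta_{X}$ is irreducible, the limit cycles for $\alpha$ and $\beta$ individually must be non-negative multiples of $\Delta_{X}$, forcing $\alpha,\beta\in\mathbb{R}_{\geq 0}[\Delta_{X}]$. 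The main obstacle is the rigidity step: controlling components of the relative Chow scheme and excluding moving families of non-trivial effective self-correspondences on very general K3 surfaces that act as the identity on transcendental cohomology.
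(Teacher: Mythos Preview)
Your approach is genuinely different from the paper's, and the rigidity step contains a real gap that you yourself flag as ``the main obstacle.''

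\textbf{What the paper does.} The paper never analyzes the K\"unneth decomposition or the Chow scheme of $X\times X$. Instead it works on the blow-up $\phi:Y\to S\times S$ along $\Delta_S$ and proves a general criterion: if $\phi^{*}[\Delta_S]$ is not pseudoeffective on $Y$, then $\Delta_S$ is \emph{strongly numerically rigid} (which by definition gives both uniqueness as an effective $\mathbb{R}$-cycle and extremality). The hypothesis is then verified by an explicit computation on a quartic K3 $S_0$, where the basepoint-free divisor $H_1+H_2-E$ on $Y$ (coming from the secant-line map to $Gr(2,4)$) satisfies $(H_1+H_2-E)^2\cdot\phi^{*}\Delta_{S_0}<0$, so $\phi^{*}[\Delta_{S_0}]$ is not pseudoeffective. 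A deformation argument over the moduli of degree-$d$ K3s (using that every polarized family contains a quartic specialization and that pseudoeffectivity specializes) then propagates this to the very general member.

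\textbf{Where your argument breaks.} Your case (b) --- a component of the relative Chow scheme dominating the base --- is not excluded by the Hodge-theoretic input you invoke. Knowing $\mathrm{End}_{\mathrm{Hdg}}(T(X))=\mathbb{Q}$ only tells you that an irreducible dominant correspondence $\Gamma$ acts on $T(X)$ as some rational scalar $\lambda$; it does not force $\lambda\in\{0,1\}$, nor does $\lambda=1$ force $\Gamma=\Delta_X$. Nothing you have written rules out, say, an irreducible $\Gamma\neq\Delta_X$ with $e_4$-coefficient equal to $1$ (plus adjustments in the $e_1,e_2,e_3$ directions by fibers and curve-products) deforming over moduli. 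The sentence ``ruled out by \ldots the rigidity of the diagonal in cohomological families'' is circular: that rigidity is precisely the assertion to be proved. There is also a secondary issue: the statement concerns effective $\mathbb{R}$-cycles, which may involve irreducible components of unbounded degree with arbitrarily small real coefficients, so a countability argument over components of the (integral) Chow scheme does not directly control them. The paper's notion of strong numerical rigidity and its blow-up criterion are designed exactly to handle this.
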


We expect the statement holds for every K3 surface, and we prove it for some specific classes (for example, for K3 surfaces of degree divisible by $4$ with Picard rank $1$).
\end{exmple}

\subsection{Acknowledgements}
We want to thank M.~Fulger for his input and for numerous corrections and improvements. We thank F.~Catanese for a discussion about fake quadrics, E.~Macr{\`{\i}} for a discussion about $\Hilb^{2}$ of K3 surfaces, and X.~Zhao for alerting us to the work of Lie Fu \cite{LieFu}.  BL was supported by an NSA Young Investigator Grant and by NSF grant 1600875, and JCO was supported by RCN grant 250104.

\section{Background} \label{backgroundsec}

Throughout we work over $\mathbb{C}$.  For a projective variety $X$, we will let $\Delta_X$ denote the diagonal in the self-product $X\times X$. The two projections of $X\times X$ will be denoted by $\pi_1$ and $\pi_2$ respectively.

\subsection{Cones of positive cycles}

Let $X$ be a projective variety.  We let $N_{k}(X)_{\mathbb{Z}}$ denote the group of $k$-cycles modulo numerical equivalence.  The numerical class of a cycle $Z$ is written $[Z]$ and we use $\equiv$ to denote numerical equivalence.  The abelian group $N_{k}(X)_{\mathbb{Z}}$ forms a lattice inside the numerical group $
N_{k}(X) := N_{k}(X)_{\mathbb{Z}} \otimes_{\mathbb{Z}} \mathbb{R}$, which is a finite dimensional real vector space.
We define $N^{k}(X)$ to be the vector space dual to $N_{k}(X)$.  When $X$ is smooth of dimension $n$, capping against $[X]$ defines an isomorphism $N^{n-k}(X) \to N_{k}(X)$, and we will switch between subscripts and superscripts (of complementary dimension) freely.

\begin{defn}
We say that a numerical class is {\em effective} if it is the class of an effective $\mathbb{R}$-cycle.  The {\em pseudoeffective cone} $\Eff_{k}(X)$ in $N_{k}(X)$  is the closure of the cone generated by effective classes.  A class is {\em big} when it lies in the interior of $\Eff_{k}(X)$.

The {\em nef cone} $\Nef^{k}(X)$ in $N^{k}(X)$ is the dual of the pseudoeffective cone, and a cycle is called nef if its class belongs to this cone. That is, a cycle is nef if it has non-negative intersection numbers with all $k$-dimensional subvarieties.
\end{defn}

The basic properties of these cones are verified in \cite{fl14}: they are full-dimensional, convex, and contain no lines.  Pseudo-effectiveness is preserved by pushforward, and nefness is preserved by pullback.  It is useful to have more a restrictive form of dual positivity:

\begin{defn}[\cite{fl14}]
Let $X$ be a projective variety.  A cycle class $\alpha \in N^{k}(X)$ is said to be {\em universally pseudoeffective} if $\pi^{*}\alpha $ is pseudoeffective for every morphism $\pi: Y \to X$. 
\end{defn}The primary examples of such cycles are complete intersections of ample divisors, or more generally, Chern classes of globally generated vector bundles.
As suggested by the superscript demarcation, the universally pseudoeffective cone is naturally contravariant for morphisms and should be thought of as a ``dual'' positive cone by analogy with the nef cone.

\subsection{Positive homology classes}
Let $H_{2k}(X)_{alg}\subseteq H_{2k}(X)$ denote the subspace of algebraic homology classes, i.e., the image of the cycle class map $cl:CH_k(X)\otimes {\mathbb R}\to H_{2k}(X)$. Let $E_{2k}(X)\subset H_{2k}(X)_{alg}$ denote the {\em cohomological effective cone}.
\begin{defn}
We say a $k$-cycle $\Gamma$ is {\em homologically big} if its cohomology class $[\Gamma]$ lies in the interior of $E_{2k}(X)$.
\end{defn}

In general, for smooth complex projective varieties, cohomological implies numerical equivalence, so any homologically big cycle is big in the usual sense. If Grothendieck's standard conjecture $D$ holds on $X$, namely that numerical and cohomological equivalence coincide, then $N_k(X)=H_{2k}(X)_{alg}$ and the two notions of `big' coincide. In the special case of a self-product, it is known that $D$ holds on $X\times X$ if and only if the Lefschetz standard conjecture holds on $X$ (i.e.~the inverse of the hard Lefschetz isomorphism is induced by a correspondence). This is known to hold for surfaces \cite{lieberman68}, and for threefolds not of general type by results of Tankeev \cite{tankeev}. We will in this paper be mostly interested in surfaces, and use the fact that the two notions coincide in this case without further mention.


We will also require the following result of \cite{Ott15} which follows from the theory of relative Hilbert schemes:

\begin{prop}\label{deformation}
Let $f: \mathcal{X} \to T$ be a smooth family of projective varieties over a smooth variety $T$ and suppose that $\alpha \in H^{k,k}(\mathcal{X},\mathbb{Z})$ has that the restriction to a very general fiber is represented by an effective cycle.  Then $\alpha|_{\mathcal{X}_{t}}$ is an effective class for any fiber $\mathcal{X}_{t}$.
\end{prop}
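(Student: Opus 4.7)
The plan is to parametrize effective cycles in fibers using the relative Hilbert scheme and to combine a countability argument with the very general hypothesis. After possibly shrinking $T$ or passing to an \'etale cover, assume that $f$ is projective and fix a relative polarization $\mathcal{L}$. Since $R^{2k}f_{*}\mathbb{Z}$ is a local system on $T$ and $\alpha$ is a global integral class, the fiberwise restrictions $\alpha|_{\mathcal{X}_{t}}$ form a locally constant family of classes of type $(k,k)$.

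For each finite collection $\tau = \bigl((P_{1},a_{1}),\dots,(P_{r},a_{r})\bigr)$ of Hilbert polynomials $P_{i}$ with respect to $\mathcal{L}$ and positive integer multiplicities $a_{i}$, form the product of relative Hilbert schemes $H_{\tau} = \prod_{i} \Hilb^{P_{i}}(\mathcal{X}/T)$, which is projective over $T$ by Grothendieck's theorem. Inside $H_{\tau}$, the union $H_{\tau}^{\alpha}$ of those connected components on which the universal cycle restricts fiberwise to a representative of $\alpha|_{\mathcal{X}_{t}}$ is an open and closed subscheme; this follows from the local constancy of the universal cycle class and of $\alpha|_{\mathcal{X}_{t}}$. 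The set $S \subset T$ of points $t$ where $\alpha|_{\mathcal{X}_{t}}$ admits an effective representative is then exactly $\bigcup_{\tau} \operatorname{image}\bigl(H_{\tau}^{\alpha} \to T\bigr)$, and there are only countably many such types $\tau$.

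The hypothesis asserts that $S$ contains the complement of a countable union of proper Zariski-closed subsets of $T$. If no $H_{\tau}^{\alpha}$ dominated $T$, each would have closed image contained in a proper subvariety, and $S$ would then be contained in a countable union of proper subvarieties of $T$, contradicting the very general assumption. Hence some $H_{\tau}^{\alpha}$ dominates $T$; by properness its image is all of $T$, which gives an effective representative of $\alpha|_{\mathcal{X}_{t}}$ for every $t \in T$.

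The countability-plus-properness argument is the conceptual core and is essentially formal. The main technical obstacle lies in the setup: arranging a relative polarization (perhaps only after étale base change), verifying that the relevant Hilbert schemes split into the promised projective-over-$T$ pieces compatible with the global class $\alpha$, and justifying the local constancy statements that make "cycle class equals $\alpha|_{\mathcal{X}_{t}}$" a well-defined condition on components of $H_{\tau}$. Making these steps precise is the content of \cite{Ott15}.
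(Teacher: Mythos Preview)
The paper does not actually prove this proposition; it simply attributes the result to \cite{Ott15} and remarks that it ``follows from the theory of relative Hilbert schemes.'' Your sketch is precisely the standard relative Hilbert scheme plus countability-and-properness argument that this remark alludes to, so your approach matches what the paper indicates.
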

We can use this result when $\mathcal{X}$ is a family of varieties for which homological and numerical equivalence coicide (e.g., fourfolds). In this case, the theorem also implies that a class which restricts to be big on a very general fiber has big restriction on every fiber.

\section{Varieties with big diagonal}
In this section we consider the geometric implications of big diagonals.  

\begin{lem} \label{bigimpliesbigupsef}
Let $X$ be a projective variety.  If $X$ carries a universally pseudoeffective class $\alpha \in N^{k}(X)$ that is not big, then $\Delta_{X}$ is not big.
\end{lem}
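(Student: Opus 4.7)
The plan is to show that $\Delta_X$ pairs trivially with a nonzero nef class on $X\times X$, which by the duality between $\Eff_n(X\times X)$ and $\Nef^n(X\times X)$ precludes $\Delta_X$ from lying in the interior of $\Eff_n(X\times X)$. Throughout I write $n = \dim X$ and, to avoid trivialities, assume $\alpha \neq 0$ (which forces $0 < k < n$ since in the extreme codimensions every nonzero pseudoeffective class is big).

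First I would produce a nef test class on $X$: universal pseudoeffectivity implies ordinary pseudoeffectivity, so the non-bigness hypothesis places $\alpha$ on the boundary of $\Eff^k(X)$. A supporting hyperplane argument, or equivalently the duality between $\Eff^k(X)$ and $\Nef^{n-k}(X)$, then produces a nonzero class $\beta \in \Nef^{n-k}(X)$ with $\alpha \cdot \beta = 0$. With this in hand, I set
$$
\gamma := \pi_1^*\alpha \cdot \pi_2^*\beta \,\in\, N^n(X\times X)
$$
and aim to verify (a) $\Delta_X \cdot \gamma = 0$, (b) $\gamma \neq 0$, and (c) $\gamma$ is nef. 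Property (a) is immediate from $\pi_i|_{\Delta_X} = \mathrm{id}_X$ under the natural identification $\Delta_X \cong X$, giving $\Delta_X \cdot \gamma = \alpha \cdot \beta = 0$. For (b), nondegeneracy of the intersection pairing on $X$ yields $\mu \in N^{n-k}(X)$ and $\nu \in N^k(X)$ with $\alpha \cdot \mu \neq 0$ and $\beta \cdot \nu \neq 0$, and a K\"unneth-type computation produces $\gamma \cdot \pi_1^*\mu \cdot \pi_2^*\nu = (\alpha \cdot \mu)(\beta \cdot \nu) \neq 0$.

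The main obstacle is (c). For an irreducible $n$-dimensional subvariety $Z \subset X \times X$, I would fix a resolution $\rho:\widetilde Z \to Z$ and set $f_i := \pi_i \circ \rho : \widetilde Z \to X$, so that $\gamma \cdot Z$ is computed on $\widetilde Z$ as $f_1^*\alpha \cdot f_2^*\beta$. The pullback $f_2^*\beta$ is nef on $\widetilde Z$ since nefness is preserved under pullback, and $f_1^*\alpha$ is pseudoeffective on $\widetilde Z$ — this is the step that genuinely requires universal pseudoeffectivity of $\alpha$, since $f_1$ is an arbitrary morphism into $X$ and mere nefness of $\alpha$ would not suffice. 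Because nef classes pair nonnegatively with pseudoeffective classes of complementary dimension on any projective variety, $\gamma \cdot Z \geq 0$, so $\gamma$ is nef. The conclusion now follows from the standard duality characterization that a class in $N_n(X\times X)$ is big if and only if it pairs strictly positively with every nonzero element of $\Nef^n(X\times X)$.
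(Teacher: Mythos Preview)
Your proof is correct and follows essentially the same route as the paper: both construct the test class $\gamma = \pi_1^*\alpha \cdot \pi_2^*\beta$ from a nef $\beta$ annihilating $\alpha$, verify that $\gamma$ is nef using the universal pseudoeffectivity of $\alpha$, and conclude from $\gamma \cdot \Delta_X = 0$. The only cosmetic differences are that you check nefness by passing to a resolution of each test subvariety (the paper argues directly on $X\times X$ that $\pi_1^*\alpha \cdot E$ is pseudoeffective and then pairs with the nef $\pi_2^*\beta$) and that you explicitly verify $\gamma \neq 0$, which the paper leaves implicit.
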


In particular, if $X$ carries a nef divisor that is not big, then $\Delta_{X}$ is not big.

\begin{proof}
Let $n$ denote the dimension of $X$.  Since $\alpha$ is not big, there is some non-zero nef class $\beta \in N^{n-k}(X)$ that has vanishing intersection with $\alpha$.  Then consider $\gamma := \pi_{1}^{*}\alpha \cdot \pi_{2}^{*}\beta$ on $X \times X$.  Clearly $\gamma$ is a nef class: if $E$ is an effective cycle of dimension $n$, then $\pi_{1}^{*}\alpha \cdot E$ is still pseudoeffective, so that it has non-negative intersection against the nef class $\pi_{2}^{*}\beta$.  Since $\gamma \cdot \Delta_{X} = 0$, we see that $\Delta_{X}$ can not be big.
\end{proof}

\begin{cor} \label{bigmorcor}
Let $X$ be a projective variety of dimension $n$.  If $X$ admits a surjective morphism $f: X \to Y$ to a variety of dimension $< n$, then $\Delta_{X}$ is not big.
\end{cor}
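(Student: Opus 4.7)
The plan is to reduce directly to Lemma \ref{bigimpliesbigupsef} by exhibiting a universally pseudoeffective divisor class on $X$ that fails to be big, where the construction uses the morphism $f$ in an essential way.

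First I would fix a very ample divisor $H$ on $Y$ and take $\alpha := f^{*}H \in N^{1}(X)$. To see $\alpha$ is universally pseudoeffective, note that for any morphism $\pi : Z \to X$, the composition $f \circ \pi : Z \to Y$ pulls back $H$ to a globally generated (in particular, effective) divisor class on $Z$; thus $\pi^{*}\alpha = (f\pi)^{*}H$ is pseudoeffective, verifying the condition of the definition. (Equivalently, $\alpha$ is the first Chern class of a globally generated line bundle, which is one of the examples listed after the definition of universal pseudoeffectivity.)

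Next I would argue that $\alpha$ is not big. Since $\dim Y < n$, we have $N^{n}(Y) = 0$, so $H^{n} = 0$ in $N^{n}(Y)$, and hence $\alpha^{n} = f^{*}(H^{n}) = 0$ in $N^{n}(X)$. On the other hand $\alpha$ is nef, being the pullback of an ample divisor. By the standard characterization of bigness for nef divisors (a nef divisor $D$ on an $n$-dimensional projective variety is big if and only if $D^{n} > 0$), the vanishing of $\alpha^{n}$ forces $\alpha$ to fail bigness. Lemma \ref{bigimpliesbigupsef} then gives that $\Delta_{X}$ is not big, which is the claim.

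There is not really a substantial obstacle here; the content of the corollary lies entirely in Lemma \ref{bigimpliesbigupsef}, and the only point to be careful about is packaging the pullback divisor as a universally pseudoeffective (as opposed to merely nef) class, so that the hypothesis of that lemma is genuinely met.
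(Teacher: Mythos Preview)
Your proof is correct and matches the paper's approach. The paper does not give an explicit proof of the corollary; it is meant to follow immediately from the remark after Lemma~\ref{bigimpliesbigupsef} that a nef but non-big divisor already suffices (nef divisors being universally pseudoeffective), and your argument unpacks exactly this via $\alpha = f^{*}H$.
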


\def\K{\mathcal K}

It is sometimes helpful to consider non-algebraic classes as well. In this setting, we recall that a $(1,1)$-cohomology class $\alpha$ is defined to be nef if it is the limit of K\"ahler classes.

\begin{thrm}\label{analytictheorem}
Let $X$ be an $n$-dimensional smooth projective variety admitting a non-zero nef cohomology class $\alpha \in H^{1,1}(X,\mathbb{R})$ such that $\alpha^n=0$. Then $\Delta_{X}$ is not homologically big.
\end{thrm}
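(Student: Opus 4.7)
The plan is to adapt the strategy of Lemma~\ref{bigimpliesbigupsef} to the possibly non-algebraic setting: I will construct a nef cohomology class $\gamma \in H^{n,n}(X\times X,\mathbb{R})$ satisfying $\gamma \cdot [\Delta_X] = 0$ yet $\gamma \cdot A^n > 0$ for some ample divisor $A$ on $X\times X$. If $\Delta_X$ were homologically big, then $[\Delta_X]$ would lie in the interior of the cohomological effective cone $E_{2n}(X\times X)$; any linear functional that is non-negative on this cone and vanishes at an interior point must vanish on the whole cone, contradicting $\gamma \cdot A^n > 0$.

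Let $\nu := \nu(\alpha)$ denote the numerical dimension of $\alpha$, defined as the largest integer $k$ with $\alpha^k \cdot H^{n-k} > 0$ for some (equivalently, any) ample class $H$ on $X$. The hypothesis $\alpha^n = 0$ gives $\nu \leq n-1$. For the lower bound $\nu \geq 1$: if $\alpha \cdot H^{n-1} = 0$ for every ample $H$, polarization extends this to $\alpha \cdot H_1 \cdots H_{n-1} = 0$ for arbitrary $H_i \in H^{1,1}(X,\mathbb{R})$, and then the hard Lefschetz isomorphism $H^{1,1}(X,\mathbb{R}) \to H^{n-1,n-1}(X,\mathbb{R})$ given by multiplication by $H^{n-2}$, together with Poincar\'e duality, forces $\alpha = 0$---contradicting the hypothesis. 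I then set
$$
\gamma \;:=\; \pi_1^*\bigl(\alpha^\nu\bigr) \cdot \pi_2^*\bigl(\alpha \cdot H^{n-\nu-1}\bigr).
$$
Its restriction to the diagonal is $\alpha^{\nu+1} \cdot H^{n-\nu-1} = 0$ by maximality of $\nu$, while a K\"unneth computation with $A := \pi_1^*H + \pi_2^*H$ yields
$$
\gamma \cdot A^n \;=\; \binom{n}{n-\nu}\bigl(\alpha^\nu \cdot H^{n-\nu}\bigr)\bigl(\alpha \cdot H^{n-1}\bigr) \;>\; 0,
$$
both factors being positive by the definition of $\nu$ and the bound $\nu \geq 1$.

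The main obstacle is showing that $\gamma$ is nef in the paper's sense---that it pairs non-negatively with every effective $n$-cycle on $X\times X$---because products of nef $(1,1)$-classes need not be nef as higher-codimension numerical classes. I would resolve this by passing to smooth forms: writing $\alpha = \lim_k [\omega_k]$ with $\omega_k$ K\"ahler forms and $H = [\eta]$ for some K\"ahler form $\eta$, each approximant is represented by the smooth non-negative $(n,n)$-form $\pi_1^*\omega_k^\nu \wedge \pi_2^*(\omega_k \wedge \eta^{n-\nu-1})$, which integrates non-negatively against any effective $n$-cycle. Taking $k \to \infty$ transfers non-negativity to $\gamma$, completing the proof.
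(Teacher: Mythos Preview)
Your proof is correct and follows essentially the same approach as the paper: both construct a nef $(n,n)$-class on $X\times X$ of the shape $\pi_1^*\alpha^k \cup \pi_2^*\alpha \cup (\text{K\"ahler filler})$ that vanishes on the diagonal yet pairs positively with a power of an ample class, and both verify nefness by observing that the class is (a limit of classes of) weakly positive $(n,n)$-forms. The only cosmetic differences are that the paper fills the remaining degree with $\omega^{n-k-1}$ for a K\"ahler form $\omega$ on $X\times X$ rather than with $\pi_2^*H^{n-\nu-1}$ pulled back from one factor, and that you make the K\"ahler approximation of $\alpha$ explicit where the paper appeals directly to weak positivity.
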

\begin{proof}Let $\omega$ be a K\"ahler form on $X\times X$. Let $\alpha$ be a nef $(1,1)$-form on $X$ and let $0< k< n$ be an integer so that $\alpha^k\neq 0$, but $\alpha^{k+1}= 0$. The two pullbacks $\pi_1^*\alpha^k$ and $\pi_2^*\alpha \cup \omega^{n-k-1}$ are weakly positive forms on $X\times X$, and hence their product $$\beta=\pi_1^*\alpha^k\cup \pi_2^*\alpha\cup \omega^{n-k-1}$$ is a weakly positive $(n,n)$-class on $X\times X$ \cite[Ch. III]{dembook}. Now the main point is that $\beta$ is nef, in the sense that $\int_Z \beta\ge 0$ for all subvarieties $Z\subset X \times X$. This is because $\beta$ restricts to a non-negative multiple of the volume form on $Z$ for every smooth point on it (cf.  \cite[Ch. III (1.6)]{dembook}). Note however that it is not in general the case that the product of two nef classes remains nef,  as shown in \cite{delv11}. 

If $\Delta_{X}$ is homologically big, then we can write $[\Delta_{X}] = \epsilon h^n+Z$ where $\epsilon>0$, $h$ is an ample line bundle and $Z$ is an effective cycle. Moreover, since $h$ is ample and $\alpha$ is nef, the following two inequalities hold:
\begin{equation}\label{cupprod}\int_Z \beta\ge 0 \,\,\mbox { and }\int_X h^n \cup \beta>0
\end{equation}
However, these contradict $\beta\cdot \Delta_{X}=0$, which holds by our assumptions on $\alpha$ and $k$. \end{proof}

Bigness of the diagonal is compatible with pushforward:

\begin{lem} \label{pushforwardpreservesbigdiag}
Let $f: X \to Y$ be a surjective morphism of projective varieties.  If $\Delta_{X}$ is big, then so is $\Delta_{Y}$.
\end{lem}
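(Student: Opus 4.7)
The plan is to reduce to the case where $f$ is generically finite and then apply a short convex-duality argument. Since $f$ is surjective, $\dim Y \leq \dim X$. If the inequality were strict, Corollary \ref{bigmorcor} applied to $f: X \to Y$ would contradict the hypothesis that $\Delta_X$ is big. Hence $\dim X = \dim Y =: n$, so $f$ is generically finite of some positive degree $d$, and $F := f \times f : X \times X \to Y \times Y$ is generically finite surjective of degree $d^2$.

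The second step is the key computation: since the restriction of $F$ to $\Delta_X \cong X$ factors as $X \xrightarrow{f} Y \cong \Delta_Y$, one has
$$F_*[\Delta_X] = d \cdot [\Delta_Y].$$
This identifies $[\Delta_Y]$, up to the positive scalar $d$, as a pushforward of $[\Delta_X]$.

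For the third step I would use the duality characterization of bigness: a class lies in the interior of $\Eff_n$ if and only if it pairs strictly positively with every nonzero class in the dual cone $\Nef^n$. So to show $\Delta_Y$ is big, it suffices to show that $\gamma \cdot \Delta_Y > 0$ for every nonzero nef class $\gamma \in N^n(Y \times Y)$. Given such a $\gamma$, note that $F^*\gamma$ is nef (pullback of nef is nef) and nonzero (since $F_* F^*\gamma = d^2\gamma \neq 0$ because $F$ is generically finite of degree $d^2$). Bigness of $\Delta_X$ then gives $F^*\gamma \cdot \Delta_X > 0$, and by the projection formula
$$d \cdot \gamma \cdot \Delta_Y \;=\; \gamma \cdot F_*[\Delta_X] \;=\; F^*\gamma \cdot [\Delta_X] \;>\; 0,$$
yielding the required strict inequality.

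The only step that requires any care is the initial reduction, which depends on invoking Corollary \ref{bigmorcor} to rule out $\dim Y < \dim X$; without this, bigness need not be preserved because pushforward by a morphism with positive-dimensional fibres would send $[\Delta_X]$ to zero and the projection formula argument would collapse. Once the generically finite case is isolated, the remaining obstacle is purely formal: one must know that pullback of nef classes by a generically finite surjection is nonzero, which is immediate from $F_* F^* = \deg(F) \cdot \mathrm{id}$.
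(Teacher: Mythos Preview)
Your argument is correct and follows essentially the same line as the paper's own proof. Both proofs rest on the identity $(f\times f)_*[\Delta_X] = d\,[\Delta_Y]$ in the generically finite case and on the preservation of bigness under surjective pushforward; you simply make the latter fact explicit via the duality pairing with nef classes, whereas the paper invokes it as a known property. Your initial reduction to $\dim X = \dim Y$ via Corollary~\ref{bigmorcor} is exactly the observation the paper records in the remark preceding its proof.
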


Note that by Lemma \ref{bigimpliesbigupsef} the hypothesis is never satisfied if $0<\dim Y < \dim X$, so the main interest is in the generically finite case.

\begin{proof}
Let $n$ denote the dimension of $X$ and $d$ denote the dimension of $Y$.  Fix an ample divisor $H$ on $X$.  Then $\Delta_{X} \cdot H^{2n-2d}$ is a big class on $X$.

Consider the induced map $f \times f: X \times X \to Y \times Y$.  The set-theoretic image of $\Delta_{X}$ is $\Delta_{Y}$; in particular,  $(f \times f)_{*}: [\Delta_{X}] \cdot H^{2n-2d}$ is proportional to $\Delta_{Y}$.  Since the pushforward of a big class under a surjective map is still big, we see that $\Delta_{Y}$ is also big.
\end{proof}




\subsection{Cohomological criteria}
The main result of this section is the following theorem.

\begin{thrm}\label{qpg}
Let $X$ be a smooth projective variety with homologically big diagonal. Then $H^{k,0}(X)=0$ for all $k>0$.
\end{thrm}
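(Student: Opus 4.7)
The strategy mirrors that of Theorem \ref{analytictheorem}: I will produce a weakly positive $(n,n)$-form $\gamma$ on $X\times X$ that vanishes along $\Delta_X$ but pairs strictly positively with a complete intersection of ample divisors. This will contradict the homological bigness decomposition $[\Delta_X] = [E] + \epsilon[A]^n$ (with $E$ an effective $n$-cycle, $A$ an ample divisor on $X\times X$, and $\epsilon > 0$) once both sides are paired with $[\gamma]$.

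Suppose for contradiction that $\omega \in H^{k,0}(X)$ is nonzero for some $k>0$. The key construction is the holomorphic $(k,0)$-form
$$\tau := \pi_1^*\omega - \pi_2^*\omega \in H^{k,0}(X\times X),$$
which vanishes identically on $\Delta_X$ since $\pi_1$ and $\pi_2$ agree there. Set $\beta := i^{k^2}\tau\wedge\bar\tau$. On any complex $k$-subspace $V$ of a tangent space of $X\times X$, the restriction $\tau|_V$ is a scalar, so $\beta|_V$ is a non-negative multiple of the standard volume form of $V$; hence $\beta$ is a positive $(k,k)$-form in the sense of Demailly \cite[Ch. III]{dembook}. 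Fix a K\"ahler form $\Omega$ on $X\times X$ (for instance $\pi_1^*\eta + \pi_2^*\eta$ for a K\"ahler form $\eta$ on $X$) and set $\gamma := \beta \wedge \Omega^{n-k}$. Since the product of a positive form and a strongly positive form is weakly positive, $\gamma$ is a weakly positive $(n,n)$-form, and $\gamma|_{\Delta_X}=0$.

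Pairing $[\gamma]$ against both sides of $[\Delta_X] = [E] + \epsilon[A]^n$ gives
$$0 = [\gamma]\cdot[\Delta_X] = [\gamma]\cdot[E] + \epsilon\,[\gamma]\cdot[A]^n.$$
The first summand is non-negative: on the smooth locus of each irreducible component of $E$, the restriction of $\gamma$ is a non-negative top form by weak positivity. For the second, choose a K\"ahler representative of $A$, whose $n$-th wedge power is a strongly positive form representing $[A]^n$; since $\omega$ is nonzero its zero locus $Z\subset X$ is a proper analytic subset, so $\tau$ (and hence $\gamma$) is nonzero on the dense open set $(X\times X)\setminus (Z\times Z)$, making the corresponding pairing integrate to a strictly positive number. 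This forces $0>0$, a contradiction.

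The main obstacle lies in Demailly's positivity calculus: one must verify that $i^{k^2}\tau\wedge\bar\tau$ is a positive $(k,k)$-form (not merely in the decomposable case) and that wedging a positive form with a strongly positive form yields a weakly positive form pairing non-negatively with effective cycles. Once these non-algebraic inputs are in hand, the pairing argument proceeds cleanly; the essential use of non-algebraic cohomology classes matches the remark immediately following the theorem statement.
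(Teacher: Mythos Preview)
Your proof is correct and is essentially the same as the paper's. Both construct the $(n,n)$-class $\varepsilon\,(\pi_1^*\sigma-\pi_2^*\sigma)\wedge\overline{(\pi_1^*\sigma-\pi_2^*\sigma)}\wedge\Omega^{n-k}$ (your $\gamma$, the paper's $\beta$), observe that it vanishes on $\Delta_X$, pairs non-negatively with every effective $n$-cycle, and pairs strictly positively with $h^n$; the only cosmetic difference is that the paper justifies non-negativity on a subvariety $Z$ by pulling back to a resolution and invoking the Hodge--Riemann bilinear relations there, whereas you phrase the same computation in the language of positive $(k,k)$-forms \`a la Demailly---these are two ways of saying the same thing, since your pointwise check that $i^{k^2}\tau\wedge\bar\tau$ restricts non-negatively to every complex $k$-plane is exactly the linear-algebra content of Hodge--Riemann for $(k,0)$-forms.
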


In particular, no varieties with trivial canonical bundle can have homologically big diagonal.

\begin{proof}
Following \cite{coniveau} and \cite{LieFu}, we will utilize the Hodge--Riemann relations to find faces of the effective cones of cycles.
To set this up, let $\omega$ be a K\"ahler form on a smooth projective variety $W$.  Note that a cohomology class in $H^{k,0}(W)$ is automatically primitive.  Thus by the Hodge--Riemann bilinear relations, the bilinear form on $H^{k,0}(W)$ given by $$q(a,b)=\varepsilon \int_W a \cup \bar{b}\cup \omega^{n-k}$$ is positive definite. Here $\varepsilon=1$ if $k$ is even, and $\varepsilon=\sqrt{-1}$ if $k$ is odd. 

Now fix a K\"ahler form $\omega$ on $X \times X$ and let $\sigma$ be a non-zero closed $(k,0)$-form on $X$. Consider the product 
$$
\beta=\varepsilon\left(\pi_1^*\sigma -\pi_2^*\sigma\right)\cup \left(\pi_1^*\bar \sigma-\pi_2^*\bar\sigma\right)\cup \omega^{n-k}.
$$This is a non-zero $(n,n)$-form on $X\times X$, which by construction vanishes on the diagonal. 

Now, if $Z\subset X\times X$ is an $n$-dimensional subvariety, the Hodge--Riemann relations (applied on a resolution of $Z$) imply that $\beta \cdot Z\ge 0$. Similarly, $\beta \cdot h^{n}>0$ for an ample divisor $h$ on $X\times X$. Finally, since $\Delta_X\cdot \beta=0$, it follows that $\Delta$ cannot be homologically big.
\end{proof}

\begin{rmk}
The above theorem can also be deduced from \cite[Lemma 3.3]{LieFu}, which is proved using a similar argument. 
\end{rmk}




\begin{exmple}
Even when the diagonal is only (numerically) big, we can still show that $H^{1,0}(X)$ vanishes.  First suppose that $A$ is an abelian variety of dimension $n$.  Then the diagonal is the fiber over 0 of the subtraction map $f: A \times A \to A$.  In particular, $\Delta_{A}$ has vanishing intersection against the nef class $f^{*}L \cdot H^{n-1}$ where $L$ is an ample divisor on $A$ and $H$ is an ample class on $A \times A$.  Under suitable choices, the class $\beta$ in the proof of Theorem \ref{qpg} constructed from $H^{1,0}(A)$ will be exactly this $n$-cycle.

More generally, when $X$ is a smooth projective variety with non-trivial Albanese, we have a subtraction map $X \times X \to A$.  The diagonal will have vanishing intersection against the pullback of an ample divisor from $A$ under the subtraction map intersected with an appropriate power of an ample divisor on $X \times X$.  Again, this is essentially the same as the class $\beta$ constructed in the proof above.
\end{exmple}

\begin{exmple} \label{blowupelliptic}
We give an example of a smooth Fano threefold $X$ with homologically big diagonal which satisfies $h^{2,1}(X) \neq 0$.  Thus Theorem \ref{qpg} is optimal in the sense that the other Hodge groups need not vanish.

Let $X$ be the blow-up of $\mathbb{P}^{3}$ along a planar elliptic curve $C$ which does not have complex multiplication.  Let $H$ denote the pullback of the hyperplane class to $X$ and $E$ denote the exceptional divisor.  It is easy to verify that:
\begin{align*}
\Eff_{2}(X) = \langle H-E, E \rangle \qquad \qquad & \Nef_{2}(X) = \langle 3H-E,H \rangle  \\
\Eff_{1}(X) = \langle HE, H^{2}-HE \rangle \qquad \qquad & \Nef_{1}(X) = \langle H^{2}, 3H^{2} - HE \rangle
\end{align*}

On $X \times X$ let $H_{i},E_{i}$ denote the pullbacks of $H$ and $E$ under the $i$th projection.  Since $C$ does not have complex multiplication, $N_{3}(X \times X)$ has dimension $11$: it is spanned by $\Delta_{X}$ and the non-zero products of $H_{1},E_{1},H_{2},E_{2}$.

Recall that $C \times C$ has three-dimensional Neron-Severi space spanned by the fibers $F_{1},F_{2}$ of the projections and the diagonal $\Delta_{C}$.  Let $Z_{a,b,c}$ denote the class in $N_{3}(X \times X)$ obtained by pulling the divisor $aF_{1} + bF_{2} + c\Delta_{C}$ back from $C \times C$ to $E \times E$ and then pushing forward to $X \times X$.  An intersection calculation shows that
\begin{equation*}
Z_{a,b,c} = \frac{a}{3}H_{1}E_{1}E_{2} + \frac{b}{3}H_{2}E_{1}E_{2} + c ( H_{1}^{3} + H_{1}^{2}H_{2} + H_{1}H_{2}^{2} + H_{2}^{3} - \Delta_{X}).
\end{equation*}
Applying this to the effective divisor $2F_1+2F_2-\Delta$, we obtain
\begin{align*}
\Delta_{X} = & Z_{2,2,-1} + \frac{1}{6}H_{1}E_{1}E_{2} + \frac{1}{6}H_{2}E_{1}E_{2} +  \frac{5}{6} H_{1}E_{1}(H_{2} - E_{2}) + \frac{5}{6}H_{2}E_{2}(H_{1} - E_{1}) \\
& + \frac{5}{6}H_{1}H_{2}(H_{1} - E_{1}) + \frac{5}{6}H_{1}H_{2} (H_{2} - E_{2}) +  \frac{1}{6}H_{1}^{2}(H_{2} - E_{2}) + \frac{1}{6}H_{2}^{2}(H_{1} - E_{1}) \\
& + \frac{1}{6}H_{1}^{2}E_{2} + \frac{1}{6}H_{2}^{2}E_{1} +  H_{1}^{3} + H_{2}^{3}
\end{align*}
and since the terms are all effective and together span $N_{3}(X \times X)$ we see $\Delta_{X}$ is big (and hence homologically big, since $X$ is a rational threefold).
We also note in passing that $\Delta_{X}$ is not nef, since it has negative intersection against the effective cycle $H_{1}E_{1}E_{2}$.
\end{exmple}

\subsection{Criteria for bigness}

There is one situation where it is easy to test for bigness of the diagonal, namely when the effective cones of $X\times X$ are as simple as possible.

\begin{thrm} \label{goodproducteff}
Let $X$ be a smooth projective variety of dimension $n$.  Suppose that for every $k$
\begin{equation*}
\Eff_{k}(X \times X) = \sum_{i+j=k} \pi_{1}^{*}\Eff_{i}(X) \cdot \pi_{2}^{*} \Eff_{j}(X).
\end{equation*}
Then $\Delta_{X}$ is big if and only if every nef class on $X$ is big.  
\end{thrm}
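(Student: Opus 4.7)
The plan is to prove each implication by contradiction, combining the K\"unneth decomposition of $\Delta_X$ with the product-cone hypothesis on $\Eff_n(X\times X)$.

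For the forward direction, I would argue the contrapositive in the same spirit as Lemma~\ref{bigimpliesbigupsef}. Suppose some $\alpha\in\Nef^k(X)$ fails to be big, i.e., $\alpha\notin\mathrm{int}\,\Eff_{n-k}(X)$. Cone duality then produces a nonzero $\beta\in\Nef^{n-k}(X)$ satisfying $\alpha\cdot\beta\leq 0$ (via a supporting hyperplane when $\alpha\in\partial\,\Eff_{n-k}(X)$, or a separating hyperplane when $\alpha\notin\Eff_{n-k}(X)$). Set $\gamma=\pi_1^*\alpha\cdot\pi_2^*\beta\in N^n(X\times X)$. Under the hypothesis, nefness of $\gamma$ reduces to checking nonnegativity against product generators $\pi_1^*A\cdot\pi_2^*B$ with $A\in\Eff_i(X)$ and $B\in\Eff_{n-i}(X)$; a dimension count makes this intersection vanish unless $i=k$, in which case it factors as $(\alpha\cdot A)(\beta\cdot B)\geq 0$. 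The K\"unneth formula gives $\gamma\neq 0$, and $\gamma\cdot\Delta_X=\alpha\cdot\beta\leq 0$ contradicts the fact that every big class has strictly positive intersection with every nonzero nef class.

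For the reverse direction, assume every nef class on $X$ is big, equivalently $\Nef^k(X)\setminus\{0\}\subseteq\mathrm{int}\,\Eff_{n-k}(X)$ for every $k$. I want to show $\gamma\cdot\Delta_X>0$ for every nonzero nef $\gamma\in\Nef^n(X\times X)$. The hypothesis presents $\Eff_n(X\times X)$ as a direct sum of sub-cones $C_i:=\pi_1^*\Eff_i(X)\cdot\pi_2^*\Eff_{n-i}(X)$ indexed by K\"unneth pieces, so every K\"unneth component $\gamma^{(i)}\in N^i(X)\otimes N^{n-i}(X)$ of $\gamma$ is itself nef on $X\times X$. Because $\Delta_X$ is effective, $\gamma^{(i)}\cdot\Delta_X\geq 0$ for every $i$, and it remains to prove strict positivity whenever $\gamma^{(i)}\neq 0$. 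A K\"unneth computation identifies $\gamma^{(i)}\cdot\Delta_X$ with the trace of $\gamma^{(i)}$ viewed as an endomorphism $T\colon N^i(X)\to N^i(X)$ via the Poincar\'e intersection pairing, and the nef condition on $\gamma^{(i)}$ translates exactly to $T(\Eff_{n-i}(X))\subseteq\Nef^i(X)\subseteq\mathrm{int}\,\Eff_{n-i}(X)$, i.e., $T$ is a strict cone-preserving map.

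The main technical obstacle is then to upgrade strict cone-preservation of $T$ to strict positivity of $\mathrm{tr}(T)$. I would attack this via a Perron--Frobenius/Krein--Rutman argument: $T$ has a simple Perron eigenvalue $\rho>0$ with eigenvector in $\mathrm{int}\,\Eff_{n-i}(X)$, while all remaining eigenvalues $\lambda_j$ satisfy $|\lambda_j|<\rho$; combined with the additional rigidity that $T$ also preserves the strict subcone $\Nef^i(X)$ of $\Eff_{n-i}(X)$, this should pin down enough of the spectrum to force $\mathrm{tr}(T)=\rho+\sum_j\lambda_j>0$. Equivalently, one may reformulate this step as proving that the identity tensor $\Delta_X^{(n-i)}$ lies in the interior of the projective tensor cone $\Eff_i(X)\otimes_\pi\Eff_{n-i}(X)$ under the hypothesis that every nef on $X$ is big, a claim one can verify by hand in the motivating examples such as $\mathbb P^n$, fake projective planes, and the toric Picard-rank-$5$ threefold referred to in the introduction.
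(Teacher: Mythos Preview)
Your forward direction is correct and is essentially the paper's argument for that implication.

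Your reverse direction has a real gap. The reduction to K\"unneth components and to the trace of the associated endomorphism $T$ is valid, but the spectral step you propose does not go through as stated. The bare assertion that a strictly cone-preserving linear map has positive trace is false: on $\mathbb{R}^3$ with the Lorentz cone $K=\{z\geq\sqrt{x^2+y^2}\}$, the map $(x,y,z)\mapsto(-x,-y,(1+\epsilon)z)$ sends $K\setminus\{0\}$ into $\mathrm{int}(K)$ yet has trace $\epsilon-1<0$. Your situation is more constrained---you have $T(\Eff_{n-i})\subseteq\Nef^i$ with $\Nef^i$ a proper sub-cone of $\Eff_{n-i}$---but you have not shown this extra rigidity forces $\mathrm{tr}(T)>0$, and your alternative reformulation via the interior of the projective tensor cone is just a restatement of the goal. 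The hand-checks you cite are not evidence either: for $\mathbb{P}^n$ and fake projective planes one has $\dim N^i(X)=1$ for every $i$, so each $T$ is a nonnegative scalar and the issue is vacuous.

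The paper's route is entirely different and avoids spectral theory. It first proves a dual decomposition
\[
\Nef^{k}(X\times X)=\sum_{i+j=k}\pi_1^*\Nef^{i}(X)\cdot\pi_2^*\Nef^{j}(X)
\]
under the hypothesis on $\Eff$. Granting this, if $\Delta_X$ is not big then some nonzero extremal nef class $\gamma$ on $X\times X$ satisfies $\gamma\cdot\Delta_X=0$; the display forces $\gamma=\pi_1^*\beta_j\cdot\pi_2^*\beta_{n-j}$ with $\beta_j\in\Nef^{j}(X)$ and $\beta_{n-j}\in\Nef^{n-j}(X)$, whence $\beta_j\cdot\beta_{n-j}=0$ and $\beta_j$ is a nef class on $X$ that is not big. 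The key move you are missing is to dualize the product-cone hypothesis rather than to analyze an arbitrary nef class directly.
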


\begin{proof}
We first claim that the nef cone has the expression
\begin{equation*}
\Nef^{k}(X \times X) = \sum_{i+j=k} \pi_{1}^{*}\Nef^{i}(X) \cdot \pi_{2}^{*} \Nef^{j}(X).
\end{equation*}
The containment $\supseteq$ is clear from the description of the pseudoeffective cone.  Conversely, it suffices to show that every class generating an extremal ray of $\Eff_{k}(X \times X)$ has vanishing intersection against some element of the right hand side.  By hypothesis such classes have the form $\pi_{1}^{*}\alpha_{i} \cdot \pi_{2}^{*}\alpha_{k-i}$ where $\alpha \in \Eff_{i}(X)$ and $\alpha_{k-i} \in \Eff_{k-i}(X)$ both lie on extremal rays.  Choose nef classes $\beta^{i} \in \Nef^{i}(X)$ and $\beta^{k-i} \in \Nef^{k-i}(X)$ satisfying $\alpha_{i} \cdot \beta^{i} = 0$ and $\alpha_{k-i} \cdot \beta^{k-i} = 0$.  Then
\begin{equation*}
(\pi_{1}^{*}\alpha_{i} \cdot \pi_{2}^{*}\alpha_{k-i}) \cdot (\pi_{1}^{*}\beta^{i} \cdot \pi_{2}^{*}\beta^{n-i}) = 0
\end{equation*}

Now suppose that $\Delta_{X}$ is not big.  Then it must have vanishing intersection against some $\alpha \in \Nef^{n}(X \times X)$ which lies on an extremal ray.  By the expression above, such a class has the form\begin{equation*}
\alpha = \pi_{1}^{*}\beta_{j} \cdot \pi_{2}^{*}\beta_{n-j}
\end{equation*}
where for some constant $j$ we have $\beta_{j} \in \Nef^{j}(X)$ and $\beta_{n-j} \in \Nef^{n-j}(X)$.  But then $\beta_{j} \cdot \beta_{n-j} = 0$ as classes on $X$.  Since $\beta_{j}$ has vanishing intersection against a nef class, it can not be big.

Conversely, suppose that there is a nef class in $N_{k}(X)$ which is not big.  Since there are also big nef classes in $N_{k}(X)$, by convexity of the nef cone we can find a nef class $\beta \in N_{k}(X)$ on the boundary of the pseudoeffective cone.  Thus there is another nef class $\beta'$ such that $\beta \cdot \beta' = 0$.  Arguing as above, we see that $\pi_{1}^{*}\beta \cdot \pi_{2}^{*}\beta'$ is a nef class with vanishing intersection against $\Delta_{X}$.
\end{proof}

Two typical situations where one can apply Theorem \ref{goodproducteff} are when:

\begin{itemize}
\item $X$ is a toric variety.
\item $N_{k}(X \times X) = \oplus_{i+j=k}\pi_{1}^{*}N_{i}(X) \cdot \pi_{2}^{*} N_{j}(X)$, every pseudoeffective cone on $X$ is simplicial, and every nef class on $X$ is universally pseudoeffective.
\end{itemize}

The first fact is well-known.  To see the second, note that the hypothesis on universal pseudo-effectivity shows that any external product of nef cycles is nef.  The simplicial hypothesis then implies that the external product of the pseudoeffective cones is dual to the external product of the nef cones.  Thus the external product of the pseudoeffective cones is in fact the entire pseudoeffective cone of $X \times X$.

We will apply Theorem \ref{goodproducteff} to examples where one can prove directly that all nef classes are universally pseudoeffective (e.g., fake projective spaces, Grassmannians,\ldots). However, it seems relatively rare in general for the condition on pseudoeffective cones in Theorem \ref{goodproducteff} to hold. Here is a basic example:

\begin{exmple} \label{effdecomofdiag}
Let $S$ be the blow-up of $\mathbb{P}^{2}$ in $r$ general points for some $r \geq 5$. There is a strict containment
\begin{equation*}
\mathbb{R}_{\geq 0}[F_{1}] \oplus \pi_{1}^{*}\Eff_{1}(S) \cdot \pi_{2}^{*}\Eff_{1}(S) \oplus \mathbb{R}_{\geq 0}[F_{2}] \subsetneq \Eff_{2}(S \times S).
\end{equation*}
In fact, a lengthy but straightforward computation shows that the diagonal does not lie in the cone on the left. 
\end{exmple}

\section{Dual positivity}

We next turn to the ``dual'' forms of positivity: nefness and universal pseudoeffectiveness.  The main examples are varieties with nef tangent bundle.  For such varieties the class of $\Delta_{X}$ is nef, but not all varieties with nef diagonal have nef tangent bundle; for example, a fake projective plane has nef diagonal even though the tangent bundle is antiample.

We emphasize that only ``dual-positivity'' of the tangent bundle should be inherited by the diagonal.  The bigness of the tangent bundle $T_{X}$ is quite different from the bigness of the class $\Delta_{X}$.  For example, a product of at least two projective spaces has big and nef tangent bundle, but by Lemma \ref{bigimpliesbigupsef} the diagonal class is not big.  More generally, a smooth toric variety has big tangent bundle by \cite{hsiao15}, but it is rare for a toric variety to have big diagonal.

\begin{prop} \label{nefimpliesnefcycles}
Let $X$ be a smooth variety.  If $\Delta_{X}$ is nef (resp.~universally pseudoeffective) then every pseudoeffective class on $X$ is nef  (resp.~universally pseudoeffective).
\end{prop}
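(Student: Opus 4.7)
The plan is to use $\Delta_X$ as a correspondence on $X\times X$ that transports positivity back to $X$. The central identity is that for any morphism $f\colon Y\to X$,
$$
f^{*}\alpha \;=\; (\pi_{Y})_{*}\bigl(\pi_{X}^{*}\alpha \cdot [\Gamma_{f}]\bigr),
$$
where $\Gamma_{f}\subset Y\times X$ is the graph of $f$ and $\pi_{Y},\pi_{X}$ are the two projections; moreover $[\Gamma_{f}]$ is obtained as the pullback $(f\times\operatorname{id}_{X})^{*}\Delta_{X}$. Hypotheses of positivity on $\Delta_{X}$ therefore descend to $[\Gamma_{f}]$, and we then push the resulting positivity statement back down to $Y$. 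The nef case is essentially the specialization $Y=X$, $f=\operatorname{id}$.

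\emph{Nef case.} Let $\alpha \in N^{k}(X)$ be pseudoeffective. Since the nef and pseudoeffective cones are dual, to show $\alpha$ is nef it suffices to pair it with an arbitrary pseudoeffective $\beta \in N^{n-k}(X)$ and check non-negativity. External products of effective cycles are effective, $\pi_{1}^{*}[A]\cdot\pi_{2}^{*}[B] = [A\times B]$, so by continuity the external product $\pi_{1}^{*}\alpha \cdot \pi_{2}^{*}\beta$ is pseudoeffective on $X\times X$. Restricting this class to $\Delta_{X}\cong X$ recovers $\alpha\cdot\beta$, so
$$
\alpha\cdot\beta \;=\; \Delta_{X}\cdot \pi_{1}^{*}\alpha\cdot \pi_{2}^{*}\beta \;\geq\; 0,
$$
by nefness of $\Delta_{X}$, which establishes nefness of $\alpha$.

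\emph{Universally pseudoeffective case.} Let $\alpha$ be pseudoeffective on $X$ and $f\colon Y\to X$ an arbitrary morphism; we must show $f^{*}\alpha$ is pseudoeffective on $Y$. The class $[\Gamma_{f}] = (f\times\operatorname{id}_{X})^{*}\Delta_{X}$ is universally pseudoeffective on $Y\times X$: the property of being universally pseudoeffective is preserved by pullback along any morphism, since one can apply the defining condition to compositions. Next, $\pi_{X}^{*}\alpha$ is pseudoeffective on $Y\times X$ because flat pullback sends effective cycles to effective cycles. The key auxiliary fact is that the intersection of a universally pseudoeffective class $\eta$ with a pseudoeffective class $\xi$ is pseudoeffective: writing $\xi$ as a limit of effective cycles $\sum c_{i}[C_{i}]$, one has $\eta\cdot [C_{i}] = (\widetilde{\nu}_{i})_{*}(\widetilde{\nu}_{i}^{*}\eta)$ for a resolution $\widetilde{\nu}_{i}\colon \widetilde{C}_{i}\to W$, and this is pseudoeffective because $\widetilde{\nu}_{i}^{*}\eta$ is pseudoeffective on the smooth $\widetilde{C}_{i}$ and proper pushforward preserves pseudoeffectiveness. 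Applying this to $\eta = [\Gamma_{f}]$ and $\xi=\pi_{X}^{*}\alpha$ yields pseudoeffectiveness of $\pi_{X}^{*}\alpha\cdot [\Gamma_{f}]$ on $Y\times X$, and a final pushforward along the proper map $\pi_{Y}$ then produces the pseudoeffective class $f^{*}\alpha$ on $Y$. The main obstacle is the auxiliary lemma that universally pseudoeffective times pseudoeffective is pseudoeffective; once this is in hand via the refined intersection plus resolution argument just outlined, both halves of the proposition reduce to bookkeeping with the graph identity above.
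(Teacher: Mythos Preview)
Your argument is correct. For the nef case you compute the pairing $\alpha\cdot\beta$ directly as $\Delta_X\cdot\pi_1^*\alpha\cdot\pi_2^*\beta$ and use that external products of pseudoeffective classes are pseudoeffective; the paper instead restricts $\Delta_X$ to $\pi_1^{-1}(Z)$ and pushes forward along $\pi_2$, invoking that nefness is preserved by pullback and by flat pushforward to a smooth base. These are two packagings of the same correspondence identity, but the paper's formulation has the advantage of applying uniformly to \emph{any} positivity notion preserved by pullback and flat pushforward (as they remark), which immediately handles both cases at once. Your treatment of the universally pseudoeffective case spells out in detail what the paper delegates to \cite{fl14}, namely the graph identity $[\Gamma_f]=(f\times\operatorname{id})^*\Delta_X$ together with the lemma that universally pseudoeffective times pseudoeffective is pseudoeffective; your resolution-and-projection-formula justification of that lemma is the standard one.
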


In fact, the proposition is true for any property preserved by pullback and flat pushforward.  This proposition strengthens \cite[Proposition 2.12]{cp91}, which shows the analogous statement for divisors on a variety with $T_{X}$ nef.

\begin{proof}
We focus on nefness; the proof for universal pseudoeffectiveness is identical, using the properties of positive dual classes proved in \cite{fl14}.

It suffices to show nefness for the class of an irreducible cycle $Z$ on $X$.  Since $\pi_1$ is flat, $Z' = \pi_{1}^{-1}(Z)$ represents $\pi_{1}^{*}[Z]$.  The restriction of $\Delta_{X}$ to $Z'$ is nef; since nefness is preserved by flat pushforward onto a smooth base, $(\pi_{2}|_{Z'})_{*} [\Delta_{X}]|_{Z'} = [Z]$ is also nef on $X$.
\end{proof}

\begin{cor} \label{coro:upsefrank} Let $X$ be a smooth projective variety.
\begin{enumerate}
\item If $\Delta_{X}$ is big and nef, then $N^{1}(X) \cong \mathbb{R}$.
\item If $\Delta_{X}$ is big and universally pseudoeffective, then $N_{k}(X) \cong \mathbb{R}$ for every $k$.
\end{enumerate}
\end{cor}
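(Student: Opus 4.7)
The plan is to prove both parts by the same two-step strategy: first invoke Proposition~\ref{nefimpliesnefcycles} to promote every pseudoeffective class on $X$ to whatever stronger positivity property $\Delta_{X}$ enjoys, then apply the contrapositive of Lemma~\ref{bigimpliesbigupsef}, together with bigness of $\Delta_{X}$, to force every nonzero pseudoeffective class on $X$ to be big. Once that is done, a standard cone-geometry observation closes the argument: because the pseudoeffective cone is full-dimensional, closed, convex, and contains no lines (Section~\ref{backgroundsec}), the only way for every nonzero element of it to be interior is for the ambient numerical space to be one-dimensional.

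For part (2) this plan goes through directly. Proposition~\ref{nefimpliesnefcycles}, in its universal-pseudoeffectivity form, says that every pseudoeffective class on $X$ is universally pseudoeffective. Lemma~\ref{bigimpliesbigupsef}, applied contrapositively using that $\Delta_{X}$ is big, then forces every nonzero such class to be big, so every nonzero element of $\Eff_{k}(X)$ is interior. The cone observation above gives $N_{k}(X)\cong\mathbb{R}$ for each $k$.

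For part (1), Proposition~\ref{nefimpliesnefcycles} only supplies nefness, while Lemma~\ref{bigimpliesbigupsef} is stated in terms of universal pseudoeffectivity, so we need one extra step. The key observation is that \emph{for divisors} nefness automatically upgrades to universal pseudoeffectivity: given any morphism $\pi\colon Y\to X$, the pullback of a nef divisor is nef, and any nef divisor is pseudoeffective (intersect against an ample class). Hence every pseudoeffective divisor on $X$ is nef, hence universally pseudoeffective, hence big by Lemma~\ref{bigimpliesbigupsef}, and the same dimension count yields $N^{1}(X)\cong\mathbb{R}$.

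The only real subtlety is precisely this divisor-specific upgrade from nef to universally pseudoeffective; it is known to fail in higher codimension (where a nef cycle need not even be pseudoeffective), which is exactly why part (1) is restricted to $N^{1}(X)$ while part (2), starting from the stronger hypothesis, covers all $N_{k}(X)$.
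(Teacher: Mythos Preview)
Your proof is correct and follows exactly the paper's approach: the paper's proof is the single line ``Combine Proposition~\ref{nefimpliesnefcycles}, Lemma~\ref{bigimpliesbigupsef}, and the fact that nef divisors are universally pseudoeffective,'' and you have simply unpacked this, including the implicit final step that a full-dimensional pointed closed cone whose nonzero elements are all interior must live in a one-dimensional space. Your identification of the divisor-specific upgrade (nef $\Rightarrow$ universally pseudoeffective) as the reason part~(1) is limited to $N^{1}$ is also exactly the point.
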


\begin{proof}
Combine Proposition \ref{nefimpliesnefcycles}, Lemma \ref{bigimpliesbigupsef}, and the fact that nef divisors are universally pseudoeffective.
\end{proof}

\begin{lem} \label{neflowgenus}
If a smooth variety $X$ admits a surjective map to a curve $C$ of genus $\geq 2$, then $\Delta_{X}$ is not nef.
\end{lem}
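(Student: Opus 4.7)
The plan is to show that the negativity of $[\Delta_C]^2 = 2-2g$ on $C\times C$ propagates up to $X\times X$ via $f\times f$: I would construct an $n$-dimensional subvariety of $X\times X$ (with $n = \dim X$) whose intersection with $\Delta_X$ is strictly negative, directly violating the definition of nefness.

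Concretely, I would consider the effective divisor $D := (f\times f)^{-1}(\Delta_C) = X\times_C X$ on $X\times X$, whose class is $(f\times f)^*[\Delta_C]$. After fixing a very ample divisor $H$ on $X\times X$ and choosing general members $H_1,\dots,H_{n-1} \in |H|$, Bertini guarantees that the scheme-theoretic intersection $Z := D \cap H_1 \cap \dots \cap H_{n-1}$ is an effective cycle of pure dimension $n$ representing the class $(f\times f)^*[\Delta_C]\cdot H^{n-1}$. Decomposing $Z = \sum_i m_i Z_i$ into irreducible components with $m_i > 0$, it suffices to show $\Delta_X \cdot Z < 0$: some component $Z_i$ will then be an $n$-dimensional subvariety with $\Delta_X \cdot Z_i < 0$, witnessing non-nefness.

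To compute $\Delta_X \cdot Z$, I would restrict to the diagonal. Under the identification $\Delta_X \cong X$, the map $(f\times f)|_{\Delta_X}$ is the composition $X \xrightarrow{f} C \xrightarrow{\delta} C\times C$, where $\delta$ is the diagonal embedding of $C$. Hence
\[
(f\times f)^*[\Delta_C]\,|_{\Delta_X} \;=\; f^*\delta^*[\Delta_C] \;=\; f^*c_1(N_{\Delta_C/C\times C}) \;=\; f^*c_1(T_C) \;=\; (2-2g)[F],
\]
where $[F]$ denotes the class of a fibre of $f$; the essential input here is the self-intersection formula together with the canonical identification $N_{\Delta_C/C\times C} \cong T_C$. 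Setting $H' := H|_{\Delta_X}$, which is ample on $X$, this gives
\[
\Delta_X \cdot Z \;=\; (2-2g)\,(H')^{n-1}\cdot [F].
\]
Since $(H')^{n-1}\cdot [F] > 0$ (an ample complete intersection on $X$ paired with the nonzero effective fibre class) and $2-2g < 0$ because $g \geq 2$, one obtains $\Delta_X \cdot Z < 0$ as needed.

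The only substantive step of the argument is the self-intersection identity $\delta^*[\Delta_C] = c_1(T_C)$ of degree $2-2g$, which supplies the crucial negativity; the surrounding steps (Bertini, restriction to the diagonal, and decomposition into irreducible components) are routine. Unlike Proposition \ref{nefimpliesnefcycles}, which need not be sharp on surfaces such as $C_1 \times C_2$ with both $g_i \geq 2$, this construction exploits the higher-codimensional cycle structure of $X\times X$ and so works for all dimensions uniformly.
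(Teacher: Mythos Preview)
Your argument is correct and is essentially identical to the paper's: both compute the intersection number $\Delta_X \cdot (f\times f)^*[\Delta_C] \cdot H^{n-1}$ and show it is negative using $\Delta_C^2 = 2-2g < 0$. The paper simply states this in one line (``by the projection formula and the fact that $\Delta_C$ has negative self-intersection''), whereas you spell out the restriction-to-the-diagonal computation and invoke Bertini to produce an explicit effective cycle; the latter is not strictly necessary since $(f\times f)^*[\Delta_C]\cdot H^{n-1}$ is visibly pseudoeffective.
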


\begin{proof}
Denote the morphism by $\pi: X \to C$.  Let $H$ be an ample divisor on $X \times X$.  Letting $n$ denote the dimension of $X$, we have $\Delta_{X} \cdot H^{n-1} \cdot (\pi \times \pi)^{*}\Delta_{C} < 0$ by the projection formula and the fact that $\Delta_{C}$ has negative self-intersection.
\end{proof}

We can also give a necessary condition for nefness based on the gonality of $X$.

\begin{prop}
Let $X$ be a smooth projective variety of dimension $n$ admitting a surjective generically finite map $f: X \to Y$ of degree $d$ to a smooth projective variety $Y$. Suppose that $c_{n}(X) > dc_{n}(Y)$.  Then $\Delta_{X}$ is not nef.
\end{prop}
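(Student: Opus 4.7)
The plan is to exhibit an effective $n$-cycle $R$ on $X\times X$ satisfying $\Delta_X \cdot R < 0$, which directly contradicts nefness of $\Delta_X$.

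The natural candidate arises from the fiber product $W := X\times_Y X$, realized as the scheme-theoretic preimage of $\Delta_Y$ under $f\times f\colon X\times X \to Y\times Y$. Since $f$ is generically \'etale (we work in characteristic zero), $W$ has pure dimension $n$, and over the \'etale locus of $f$ it splits as a disjoint union of $d$ reduced sheets over $X$, one of which is $\Delta_X$. Hence $\Delta_X$ is a multiplicity-one irreducible component of $W$, and
\begin{equation*}
R := [W] - [\Delta_X]
\end{equation*}
is an effective $n$-cycle on $X\times X$.

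The remaining content is two intersection number computations. First, the self-intersection formula gives $\Delta_X\cdot \Delta_X = c_n(T_X) = c_n(X)$, and likewise $\Delta_Y^2 = c_n(Y)$. Second, since the codimensions match in the Cartesian square defining $W$, one has $[W] = (f\times f)^*[\Delta_Y]$ in $N_n(X\times X)$; combining the projection formula with $(f\times f)_*[\Delta_X] = d\,[\Delta_Y]$ yields
\begin{equation*}
\Delta_X \cdot [W] \;=\; (f\times f)_*[\Delta_X]\cdot [\Delta_Y] \;=\; d\, c_n(Y).
\end{equation*}
Subtracting the two computations gives $\Delta_X\cdot R = d\,c_n(Y) - c_n(X) < 0$ by hypothesis, and since $R$ is effective this contradicts the nefness of $\Delta_X$.

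The main point requiring care is the cycle-level bookkeeping: checking both that $[W] = (f\times f)^*[\Delta_Y]$ (no excess intersection, since the actual codimension of $W$ equals the expected one) and that $\Delta_X$ appears in $[W]$ with multiplicity exactly one. Both follow from generic \'etaleness of $f$ in characteristic zero; once these are in hand, the rest of the argument reduces to a single application of the projection formula.
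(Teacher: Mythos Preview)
Your core computation is exactly the paper's, but there is a gap in the reduction. You assert that $W = X\times_Y X$ is pure of dimension $n$ and that $[W] = (f\times f)^*[\Delta_Y]$ ``since the actual codimension of $W$ equals the expected one.'' Generic \'etaleness of $f$ does not guarantee this: it controls the generic fiber, not the special ones. If $f$ has a positive-dimensional fiber $F = f^{-1}(y)$, then $F\times F\subset W$ has dimension $2\dim F$, which can exceed $n$. For a concrete instance take $f$ to be the blow-up of a point in $\mathbb{P}^3$: then $n=3$, the hypothesis $c_3(X)=6>4=d\,c_3(Y)$ is satisfied, but the exceptional $E\cong\mathbb{P}^2$ gives $E\times E\subset W$ of dimension $4$. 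In such cases neither the pure-dimensionality claim nor the identification of $[W]$ with the pullback class holds, so the definition of $R$ and the projection-formula step both break down.

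The paper avoids this by first disposing of the non-finite case. If $f$ contracts a curve, then $f^*H$ (for $H$ ample on $Y$) is big but not ample; combined with the earlier result that nefness of $\Delta_X$ forces $\Eff^1(X)=\Nef^1(X)$ (so big $=$ ample for divisors), this already shows $\Delta_X$ is not nef. Once $f$ is finite, $f\times f$ is finite surjective between smooth varieties, hence flat, and then everything you wrote --- $W$ is an effective $n$-cycle containing $\Delta_X$ with multiplicity one, the projection formula computation, and the conclusion $\Delta_X\cdot R = d\,c_n(Y)-c_n(X)<0$ --- goes through verbatim. So your argument is correct and identical to the paper's once you insert this reduction at the start.
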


\begin{proof}
If $f$ contracts a curve, then $X$ carries a curve that is not nef, and hence $\Delta_{X}$ is not nef.  Thus it suffices to consider the case when $f$ is finite.

Consider the map $F=(f\times f):X\times X\to Y \times Y$. This is finite surjective, hence flat. Note that $F|_{\Delta_{X}}=f$, so $F_*\Delta_X=d \Delta_{Y}$. Moreover, by flatness, $F^*\Delta_{Y}$ is an effective cycle containing $\Delta_{X}$ in its support. The intersection of $F^*\Delta_{Y}-\Delta_X$ with $\Delta_X$ is $$
F_*\Delta_X\cdot \Delta_{Y}-\Delta_X^2=dc_{n}(Y)-c_n(X)
$$which is negative by assumption, so that $\Delta_{X}$ is not nef.
\end{proof}

\begin{cor} \label{gonalitynefness}
Let $X$ be a smooth projective variety of dimension $n$ admitting a surjective generically finite map $f: X \to \mathbb{P}^{n}$ of degree $d$. Suppose that $c_{n}(X) > (n+1)d$.  Then $\Delta_{X}$ is not nef.
\end{cor}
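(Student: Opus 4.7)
The plan is to deduce this as a direct application of the preceding Proposition, with $Y = \mathbb{P}^n$. The only thing to verify is that the numerical hypothesis $c_n(X) > d \cdot c_n(Y)$ of that Proposition reduces, in the case $Y = \mathbb{P}^n$, to the hypothesis $c_n(X) > (n+1)d$ stated here.

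For this I would simply recall that the top Chern class of a smooth projective variety, evaluated on the fundamental class, computes the topological Euler characteristic. For $\mathbb{P}^n$ this gives $c_n(\mathbb{P}^n) = \chi(\mathbb{P}^n) = n+1$, as one sees either from the cell decomposition (one cell in each even dimension $0, 2, \ldots, 2n$) or from the splitting of the Euler sequence, which yields the Chern polynomial $(1+h)^{n+1}$ and hence $c_n(\mathbb{P}^n) = (n+1)h^n$. Substituting this into the inequality $c_n(X) > d \cdot c_n(Y)$ of the Proposition produces precisely the stated condition $c_n(X) > (n+1)d$.

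Given this, the corollary is immediate: the map $f : X \to \mathbb{P}^n$ is surjective and generically finite of degree $d$, the target $\mathbb{P}^n$ is smooth projective, and the numerical hypothesis of the Proposition is satisfied, so the Proposition directly gives that $\Delta_X$ is not nef. There is no real obstacle here — the content is entirely in the preceding Proposition, and the role of the corollary is just to specialize to the most commonly encountered situation, namely a variety of dimension $n$ which is forced by its existence of a degree-$d$ map to $\mathbb{P}^n$ (equivalently, by a bound on its gonality-type invariant) to have large top Chern number relative to that degree.
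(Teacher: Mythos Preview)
Your proposal is correct and matches the paper's intent exactly: the corollary is stated without proof in the paper because it is an immediate specialization of the preceding Proposition to $Y=\mathbb{P}^n$, using $c_n(\mathbb{P}^n)=n+1$.
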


\section{Rigidity} \label{rigiditysection}
The results of the previous sections indicate that it is quite rare for a variety to have big diagonal. In this section we will study varieties where $\Delta_X$ is as far away from big as possible, and in particular, when $[\Delta_X]$ spans an extremal ray in the pseudoeffective cone.


\begin{defn} \label{rigiddef}
Let $Z$ be an effective $\mathbb{R}$-cycle on a projective variety $X$ of dimension $k$.  We say that $Z$ is:
\begin{enumerate}
\item {\em strongly numerically rigid}, if $Z$ is irreducible and for every infinite sequence of effective $\mathbb{R}$-cycles $Z_{i}$ such that $\lim_{i \to \infty} [Z_{i}] = [Z]$, the coefficient $a_{i}$ of $Z$ in $Z_{i}$ limits to $1$.
\item {\em exceptional for a morphism $\pi:X \to Y$}, if $\reldim(\pi|_{Z}) > \reldim(\pi)$.
\end{enumerate}
\end{defn}

Exceptional classes are studied in \cite{fl15b} and are closely related to the notion of an exceptional divisor.  Among other nice properties, an exceptional numerical class can not be represented by a cycle whose deformations cover $X$.  

If $Z$ is strongly numerically rigid then it spans an extremal ray of the pseudoeffective cone and is the unique effective cycle in its numerical class.  A typical example of a strongly numerically rigid class is an irreducible divisor of numerical dimension $0$.  A related concept is discussed briefly in \cite[Page 93 Remark]{nakayama04}.

\subsection{Blowing up}

\begin{lem} \label{strongrigiditylem}
Let $X$ be a smooth projective variety and let $Z$ be an $k$-dimensional subvariety.  Suppose that there is an open neighborhood $U \subset N_{k}(X)$ of $[Z]$ such that $Z$ appears with positive coefficient in any effective $\mathbb{R}$-cycle with class in $U$.  Then $Z$ is strongly numerically rigid.
\end{lem}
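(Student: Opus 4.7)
The plan is to verify strong numerical rigidity directly from Definition \ref{rigiddef}. Since $Z$ is a subvariety it is automatically irreducible, so the only content is the statement about coefficients. Given a sequence of effective $\mathbb{R}$-cycles $Z_{i}$ with $[Z_{i}] \to [Z]$, I will decompose each as $Z_{i} = a_{i} Z + W_{i}$ where $W_{i}$ is an effective $\mathbb{R}$-cycle having no component equal to $Z$. Here $a_{i}$ is by definition the coefficient of $Z$ in $Z_{i}$, and for $i$ sufficiently large $[Z_{i}]$ lies in $U$, so the hypothesis forces $a_{i} > 0$ eventually. The task is then to show $a_{i}\to 1$.

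Arguing by contradiction, suppose $a_{i} \not\to 1$. After passing to a subsequence, $a_{i}$ converges to some $a \in [0,\infty]\setminus \{1\}$. The main case is $a < 1$, handled by a rescaling trick: for $i$ large enough that $a_{i} < 1$, the cycle $W_{i}/(1-a_{i})$ is an effective $\mathbb{R}$-cycle whose class $([Z_{i}] - a_{i}[Z])/(1-a_{i})$ converges to $[Z]$. So eventually this class lies in $U$, yet by construction $Z$ is not a component of $W_{i}/(1-a_{i})$, contradicting the defining property of $U$.

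For $a > 1$ finite and $a = \infty$ the contradiction instead comes from pointedness of $\Eff_{k}(X)$. When $a$ is finite, $[W_{i}] \to (1-a)[Z]$ exhibits $(1-a)[Z]$ as pseudoeffective with $1-a<0$; when $a = \infty$, dividing $Z_{i}$ by $a_{i}$ and passing to the limit shows that $-[Z]$ is pseudoeffective. In either case, together with the fact that $[Z]$ itself is a nonzero pseudoeffective class (its intersection with a top power of an ample class is positive), the whole line through $[Z]$ lies in $\Eff_{k}(X)$, contradicting that the pseudoeffective cone contains no lines.

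The main step, and really the only nontrivial one, is spotting the rescaling $W_{i}/(1-a_{i})$ in the case $a<1$---this is what converts the neighborhood hypothesis on $U$ into control of the coefficient $a_{i}$. The other cases are standard once this is in hand, and I do not anticipate any further obstacle.
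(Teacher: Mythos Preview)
Your argument is correct and is in fact more direct than the paper's. The paper proceeds in two stages: it first upgrades the hypothesis to a \emph{uniform} lower bound on the $Z$-coefficient over a possibly smaller neighborhood, using an auxiliary class $\beta$ in the interior of the movable cone to absorb small multiples of $Z$; it then invokes Nakayama's $\sigma_{Z}$ function (infimum of the $Z$-coefficient over effective representatives), establishes its lower semicontinuity and monotonicity along rays $\alpha + t\beta$, and deduces $\sigma_{Z}([Z]) = 1$.

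Your rescaling trick $W_{i}/(1-a_{i})$ bypasses both the movable cone and the $\sigma_{Z}$ machinery entirely: it immediately produces an effective cycle with no $Z$-component whose class lands back in $U$, which is exactly what the hypothesis forbids. This is the cleaner route for the lemma as stated. The paper's approach has the mild advantage of setting up $\sigma_{Z}$ with its general properties, but these are not reused elsewhere in the paper.

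Two minor remarks. First, the cases $a>1$ and $a=\infty$ are in fact vacuous: intersecting $Z_{i} = a_{i}Z + W_{i}$ with $H^{k}$ for an ample $H$ gives $a_{i} \leq ([Z_{i}]\cdot H^{k})/([Z]\cdot H^{k}) \to 1$, so $\limsup a_{i} \leq 1$. Your handling of these cases via pointedness of $\Eff_{k}(X)$ is nonetheless correct. Second, in the $a=\infty$ case your phrasing ``dividing $Z_{i}$ by $a_{i}$'' is slightly loose---what you actually use is that $W_{i}/a_{i}$ is effective with class $[Z_{i}]/a_{i} - [Z] \to -[Z]$---but the intended argument is clear.
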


The point is that there is no assumed lower bound for the coefficient with which $Z$ appears in the cycles.

\begin{proof}
We first show that, perhaps after shrinking $U$, there is a constant $\epsilon > 0$ such that $\epsilon Z \leq T$ for any effective $\mathbb{R}$-cycle $T$ with numerical class in $U$.  Suppose otherwise for a contradiction.  Choose $\beta$ in the interior of the movable cone (that is, the closure of the cone of classes subvarieties which deform to cover $X$).  For some sufficiently small $\tau$ we have that $\beta + \tau [Z]$ is still in the interior of the movable cone.  Thus, if $\alpha \in U$ has an effective representative where $Z$ appears with coefficient $c$, the class $\alpha + \frac{c}{\tau} \beta$ is represented by an effective $\mathbb{R}$-cycle in which $Z$ has coefficient $0$.  If there is an open neighborhood $U'$ of $[Z]$ with $\overline{U'} \subset U$ and admitting representatives with arbitrarily small coefficients of $Z$, we obtain a contradiction.

We can now argue as in \cite[Page 93 Remark]{nakayama04}: we define a function $\sigma_{Z}: \Eff^{\circ}_{k}(X) \to \mathbb{R}$ that records the infimum of the coefficients of $Z$ appearing in any effective $\mathbb{R}$-cycle of class $\alpha$.  This function is continuous on the big cone; by taking limits we extend it to a lower semicontinuous function on the entire pseudoeffective cone.  Furthermore, for any $\alpha \in \Eff_{k}(X)$ and $\beta$ in the interior of the movable cone, the restriction of $\sigma_{Z}$ to the ray $\alpha + t\beta$ is strictly decreasing in $t$.  We deduce that $\sigma_{Z}([Z]) > 0$.  An easy rescaling argument shows that $\sigma_{Z}([Z]) = 1$, and we conclude the strong numerical rigidity of $[Z]$ by lower semi-continuity of $\sigma_{Z}$.
\end{proof}

We can then test for the strong numerical rigidity of $\Delta$ by blowing up $\Delta$.

\begin{prop} \label{blowupcomputation}
Let $X$ be a smooth projective variety of dimension $n$.  Let $\phi: W \to X \times X$ denote the blow-up of the diagonal and let $i: E \to W$ denote the inclusion of the exceptional divisor.  Suppose that $\alpha \in N_{n}(X \times X)$ is a non-zero class such that
\begin{equation*}
\phi^{*}\alpha = M + i_{*}N
\end{equation*}
where $M \in N_{n}(W)$ is a nef class and $N \in N_{n}(E)$ is a nef class.
\begin{enumerate}[(1)]
\item If $\alpha \cdot \Delta_{X} = 0$ then $\Delta_{X}$ is not big.
\item If $\alpha \cdot \Delta_{X} < 0$ then $\Delta_{X}$ is strongly numerically rigid.
\end{enumerate}
\end{prop}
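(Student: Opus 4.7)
The plan is to exploit the decomposition $\phi^*\alpha = M + i_*N$ and the projection formula to bound the intersection of $\alpha$ with any effective $n$-cycle on $X\times X$, and then to feed the resulting inequality into Lemma \ref{strongrigiditylem} for (2) and into the duality between the nef cone and $\Eff_n(X\times X)$ for (1).

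The central step is the following \textbf{Key Claim}: if $Z$ is an effective $n$-cycle on $X\times X$ none of whose components equals $\Delta_X$, then $\alpha\cdot Z\ge 0$. To prove it, let $\tilde Z$ denote the strict transform of $Z$ in $W$. Each component of $Z$ is an irreducible $n$-dimensional subvariety distinct from $\Delta_X$, hence not contained in $\Delta_X$, so its strict transform is not contained in $E$. Thus $E$ meets $\tilde Z$ properly, $\phi_*\tilde Z = Z$, and the projection formula gives
\[
\alpha\cdot Z \;=\; \phi^*\alpha\cdot \tilde Z \;=\; M\cdot \tilde Z \,+\, i_*N\cdot \tilde Z.
\]
The first term is $\ge 0$ by nefness of $M$ and effectivity of $\tilde Z$. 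For the second, properness of the intersection means that the Gysin pullback $i^!\tilde Z$ is represented by the effective $(n-1)$-cycle $E\cap \tilde Z$ on $E$, so another application of the projection formula yields $i_*N\cdot \tilde Z = N\cdot i^!\tilde Z\ge 0$ by nefness of $N$ on $E$.

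With the claim in hand, I decompose any effective $n$-cycle as $Z = a\Delta_X + Z'$ with $a\ge 0$ and $Z'$ having no $\Delta_X$-component, obtaining
\[
\alpha\cdot Z \;\ge\; a\cdot(\alpha\cdot \Delta_X).
\]
In case (1), $\alpha\cdot \Delta_X = 0$ then gives $\alpha\cdot Z\ge 0$ for every effective $Z$, so $\alpha$ is a nonzero nef class in $N^n(X\times X)$ annihilating $[\Delta_X]$; hence $[\Delta_X]$ lies on a proper face of $\Eff_n(X\times X)$ and is not big. In case (2), the open set $U = \{\gamma\in N_n(X\times X) : \alpha\cdot \gamma < 0\}$ is a neighborhood of $[\Delta_X]$, and any effective cycle with class in $U$ must contain $\Delta_X$ with strictly positive coefficient (otherwise the Key Claim forces $\alpha\cdot Z\ge 0$). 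Lemma \ref{strongrigiditylem} then yields strong numerical rigidity.

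The principal obstacle is the Key Claim itself, and within it the identification $i_*N\cdot \tilde Z = N\cdot i^!\tilde Z$ together with the effectivity of $i^!\tilde Z$. Both rest on the observation that no component of $\tilde Z$ lies in $E$, which allows one to replace the refined intersection with the honest cycle-theoretic one $E\cap \tilde Z$. Once this technical point is handled, the rest of the argument is a direct application of the nefness of $M$ and $N$ together with the pointedness of the pseudoeffective cone.
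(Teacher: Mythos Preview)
Your argument is correct and follows essentially the same route as the paper: the paper simply observes that any effective $n$-cycle on $W$ not supported on $E$ has non-negative intersection with $\phi^{*}\alpha$, then pushes forward and applies Lemma \ref{strongrigiditylem}. Your Key Claim is the same statement phrased on $X\times X$ via the strict transform, and you have usefully made explicit the intersection-theoretic point (properness of $E\cap\tilde Z$ so that $i^!\tilde Z$ is effective and the projection formula gives $i_*N\cdot\tilde Z=N\cdot i^!\tilde Z\ge 0$) that the paper leaves implicit.
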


\begin{proof}
Let $T$ be an effective $n$-cycle on $W$.  If $T$ is not supported on $E$ then $T \cdot \phi^{*}\alpha$ is non-negative.  Pushing forward, we see that the only effective $n$-cycle on $X$ which can possibly have negative intersection with $\alpha$ is $\Delta_{X}$ itself.  Thus:

(1) Suppose $\alpha \cdot \Delta_{X} = 0$.  Then $\alpha$ is nef and thus $\Delta_{X}$ can not be big.

(2) Suppose $\alpha \cdot \Delta_{X} < 0$.  Then also $\alpha \cdot \beta < 0$ for any effective class $\beta$ sufficiently close to $[\Delta_{X}]$.  This means that any effective representative of such a $\beta$ must contain $\Delta_{X}$ in its support with positive coefficient.  We conclude that $\Delta_{X}$ is strongly numerically rigid by Lemma \ref{strongrigiditylem}.
\end{proof}

For surfaces, we have the following criterion:

\begin{prop} \label{blowuprigidity}
Let $S$ be a smooth surface.  Let $\phi: Y \to S\times S$ denote the blow-up of $\Delta_{S}$.  If $\phi^{*}[\Delta_{S}]$ is not pseudoeffective, then $\Delta_{S}$ is strongly numerically rigid.
\end{prop}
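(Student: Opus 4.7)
The plan is to invoke Lemma \ref{strongrigiditylem}: it suffices to find an open neighborhood $U$ of $[\Delta_S]$ in $N_2(S\times S)$ such that $\Delta_S$ occurs with positive coefficient in every effective representative of every class in $U$. First I would extract, from nef--pseudoeffective duality applied to the hypothesis, a nef class $\eta\in N_2(Y)$ with $c:=\phi^*[\Delta_S]\cdot\eta<0$. Setting $\eta':=\phi_*\eta\in N_2(S\times S)$, the projection formula gives $[\Delta_S]\cdot\eta'=c<0$, so $U:=\{\alpha\in N_2(S\times S):\alpha\cdot\eta'<0\}$ is an open neighborhood of $[\Delta_S]$.

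The crux of the argument is to show that $Z\cdot\eta'\ge0$ for every irreducible $2$-cycle $Z\neq\Delta_S$. For this I would invoke the blow-up formula
$$
\phi^*[Z]=[\tilde Z]+i_*\delta,
$$
where $\tilde Z$ is the strict transform and $\delta\in N_2(E)$ is the effective correction class supported on the preimage of $Z\cap\Delta_S$. The projection formula then gives
$$
Z\cdot\eta'=\phi^*[Z]\cdot\eta=\tilde Z\cdot\eta+\delta\cdot i^*\eta\ge 0,
$$
since $\tilde Z$ is effective and $\eta$ is nef, while $\delta$ is effective and $i^*\eta\in N_1(E)$ is nef on $E$ (the latter by a one-line projection-formula check: for any irreducible curve $C\subset E$, $i^*\eta\cdot C=\eta\cdot i_*C\ge 0$).

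With the key estimate in hand, extending by linearity shows that every effective $\mathbb{R}$-cycle $W'$ on $S\times S$ having no component along $\Delta_S$ satisfies $W'\cdot\eta'\ge 0$. Writing an arbitrary effective representative of a class $\alpha\in U$ as $W=a\Delta_S+W'$ with $a\ge 0$ and $W'$ of this form, linearity gives $\alpha\cdot\eta'=ac+W'\cdot\eta'\ge ac$. Since $\alpha\cdot\eta'<0$ and $c<0$, this forces $a>0$, and Lemma \ref{strongrigiditylem} delivers strong numerical rigidity of $\Delta_S$.

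The step I expect to require the most care is the effectiveness of the correction class $\delta$ in the blow-up formula. When $Z$ meets $\Delta_S$ properly, this is a clean consequence of Fulton's excess intersection formalism, with $\delta$ expressible as a nonnegative combination arising from Segre-class contributions on fibers of $E\to\Delta_S$ over $Z\cap\Delta_S$. In the non-proper case (where $Z$ shares a positive-dimensional locus with $\Delta_S$, though $Z\neq\Delta_S$), effectiveness still holds via the refined-pullback formalism, but it must be verified rather than taken for granted.
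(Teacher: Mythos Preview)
Your approach is correct and takes a genuinely different, more direct route than the paper. Your flagged concern about the effectiveness of $\delta$ can be dispatched cleanly in this codimension-$2$ situation: for any irreducible surface $Z\neq\Delta_S$ one has $\dim(Z\cap\Delta_S)\leq 1$, so $g^{-1}(Z\cap\Delta_S)$ has dimension at most $2$; combined with the lower bound $\dim W\geq 2$ for every irreducible component $W$ of $\phi^{-1}(Z)$ (Krull's hauptidealsatz, applied to the regular embedding $\Gamma_\phi\hookrightarrow Y\times(S\times S)$ of codimension $4$ intersected with $Y\times Z$), one sees that $\phi^{-1}(Z)$ is pure of the expected dimension $2$. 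The intersection defining $\phi^*[Z]$ is therefore proper, so $\phi^*[Z]=[\tilde Z]+\sum_j m_j[V_j]$ with all $m_j\geq 1$ and each $V_j\subset E$; in particular $\delta$ is effective. (Your description of the proper case is slightly off: when $Z\cap\Delta_S$ is zero-dimensional one actually gets $\delta=0$, since the fibres over the intersection points are absorbed into $\tilde Z$.) Once this is in hand, the rest of your argument via $\eta'=\phi_*\eta$ and Lemma~\ref{strongrigiditylem} goes through exactly as you wrote.

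The paper's proof bypasses this effectiveness question entirely. Instead of analysing $\phi^*[Z]$, it picks an arbitrary effective lift $T$ of $Z$ with no $\phi$-contracted components, uses the decomposition $[T]=\phi^*\phi_*[T]+i_*g^*L$ of $N_2(Y)$, and then produces a \emph{second} nef class $E\cdot\phi^*\pi_1^*\tilde\eta$ (where $\tilde\eta=g_*(\eta|_E)$) whose negativity against $T$ forces a component of $T$ to lie in $E$ and dominate $\Delta_S$. Your argument trades that auxiliary construction for the single observation that the blow-up has codimension $2$, which keeps the preimage pure-dimensional; this is shorter, but it would not adapt verbatim to higher-codimension centres, whereas the paper's lift-and-decompose strategy is in principle more flexible.
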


\begin{proof}
We let $E$ denote the exceptional divisor of $\phi$ and let $g: E \to \Delta_{S}$ denote the projective bundle map and $\xi$ the class of the relative $\mathcal{O}(1)$ on $E$.  We denote by $i: E \to Y$ the inclusion.

Suppose that $\phi^{*}[\Delta_{S}]$ is not pseudoeffective, and let $\eta$ be a nef class in $N^2(Y)$ such that $\eta \cdot \phi^{*}[\Delta_{S}] < 0$.  Choose a sufficiently small open subset $U \subset N_2(S\times S)$ of $[\Delta_{S}]$ such that $\eta \cdot \phi^{*}\beta < 0$ for every $\beta \in U$.  Let $Z$ be any effective $\mathbb{R}$-cycle on $S\times S$ such that $[Z] \in U$.  Let $T$ be any effective $\mathbb{R}$-cycle on $Y$ that pushes forward to $Z$; after removing vertical components, we may suppose that $T$ does not have any components contracted by $\phi$.  Let $\alpha$ denote the class of $T$.  We can write $\alpha = \phi^{*}\phi_{*}\alpha + i_{*}g^{*}L$ for some (not necessarily effective) $\mathbb{R}$-divisor class $L$ on $S$.  Then since $\eta\cdot \alpha\ge 0$, and $\eta\cdot \phi^*\phi_*\alpha<0$, we have $\eta \cdot i_{*}g^{*}L> 0$ and consequently
$$g_{*}(\eta|_{E}) \cdot L > 0.$$
Now, since $\eta|_{E}$ is nef and $g$ is flat, $g_{*}(\eta|_{E})$ is the class of a nef curve $\widetilde{\eta}$ on $S$.  Then, if $\pi_1$ denotes the projection to the first factor, we find  
\begin{align*}
E \cdot \phi^{*}\pi_{1}^{*}\widetilde{\eta} \cdot \alpha & =  E \cdot \phi^{*}\pi_{1}^{*}\widetilde{\eta} \cdot i_{*}g^{*}L \\
& = (-\xi) \cdot g^{*}(\widetilde{\eta} \cdot L)  < 0.
\end{align*}
By the nefness of $\widetilde{\eta}$ (and hence $\phi^{*}\pi_{1}^{*}\widetilde{\eta}$), we see that some component of $T$ must be contained in $E$, and furthermore (since we removed all $\pi$-contracted components) this component must dominate $\Delta_{S}$ under $\pi$.
Pushing forward, we see that $\Delta_{S}$ must be contained in $Z$ with positive coefficient.  We conclude by Lemma \ref{strongrigiditylem}.
\end{proof}

\subsection{Rigidity via the Hilbert scheme} \label{hilbschemesection}

Using the rational map $S \times S \dashrightarrow \Hilb^{2}(S)$, one can study the positivity of $\Delta_{S}$ via the geometry of the Hilbert scheme.  This approach is surprisingly successful, allowing us to use results arising from Bridgeland stability.

\begin{thrm} \label{hilb2thrm}
Let $S$ be a surface and let $B'$ denote the divisor on $\Hilb^{2}(S)$ such that $2B'$ parametrizes non-reduced subschemes.  For nef divisors $H$ and $A$ on $X$, consider $D_{1} := H^{[2]} - b_{1}B'$ and $D_{2} := A^{[2]} - b_{2}B'$ on $\Hilb^{2}(S)$.  If $c_2(S) > 0$ and
\begin{itemize}
\item $D_{1}$ and $D_{2}$ are movable and
\begin{equation*}
b_{1}b_{2} > \frac{4A \cdot H}{c_{2}(S)}
\end{equation*}
then $\Delta_{S}$ is not nef.
\item $D_{1}$ is nef, $D_{2}$ is movable, and
\begin{equation*}
b_{1}b_{2} > \frac{4A \cdot H}{c_2(S)}
\end{equation*}
then $\Delta_{S}$ is strongly numerically rigid.
\end{itemize}
\end{thrm}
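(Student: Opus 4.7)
The plan is to leverage the standard identification $\Hilb^{2}(S)=Y/\mathbb{Z}_{2}$, where $Y:=\mathrm{Bl}_{\Delta_{S}}(S\times S)$ is the blow-up of the diagonal. Writing $\phi:Y\to S\times S$ for the blow-up with exceptional divisor $E$, $g:E\to\Delta_{S}$ for the associated $\mathbb{P}^{1}$-bundle, $\xi\in N^{1}(E)$ for the relatively ample tautological class, and $\epsilon:Y\to\Hilb^{2}(S)$ for the quotient, one has $\epsilon^{*}H^{[2]}=\phi^{*}(\pi_{1}^{*}H+\pi_{2}^{*}H)$ and $\epsilon^{*}B'=E$. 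The key computation will expand $\epsilon^{*}D_{1}\cdot\epsilon^{*}D_{2}$ on $Y$ and push it down via $\phi$: mixed terms of the form $E\cdot\phi^{*}(\pi_{i}^{*}F)$ vanish under $\phi_{*}$ since $g_{*}\circ g^{*}=0$ on divisors of $S$, while $b_{1}b_{2}E^{2}=-b_{1}b_{2}i_{*}\xi$ contributes $-b_{1}b_{2}\Delta_{S}$ via $g_{*}\xi=[\Delta_{S}]$, producing
\[
\alpha := \phi_{*}(\epsilon^{*}D_{1}\cdot\epsilon^{*}D_{2}) = (\pi_{1}^{*}H+\pi_{2}^{*}H)(\pi_{1}^{*}A+\pi_{2}^{*}A)-b_{1}b_{2}\Delta_{S}.
\]
Pairing with $\Delta_{S}$ using $\Delta_{S}\cdot\pi_{i}^{*}F=F$ and $\Delta_{S}^{2}=c_{2}(S)$ then gives $\alpha\cdot\Delta_{S}=4H\cdot A-b_{1}b_{2}c_{2}(S)$, which is strictly negative by hypothesis.

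To conclude that $\Delta_{S}$ is not nef in the first case, I would observe that movability of $D_{1},D_{2}$ lifts to movability of $\epsilon^{*}D_{i}$ on $Y$ (finite pullback preserves codimension-two base loci), and that the intersection of two movable divisors on a smooth variety is pseudoeffective (approximate each by effective divisors with codimension $\geq 2$ base loci and take limits of proper intersections). Hence $\alpha$ is pseudoeffective, and the negative pairing with $\Delta_{S}$ forbids nefness. For the rigidity statement, the aim is to establish that $\alpha\cdot W\geq 0$ for every effective $2$-cycle $W$ on $S\times S$ not containing $\Delta_{S}$ as a component, so that Lemma~\ref{strongrigiditylem} forces $\Delta_{S}$ to appear with positive coefficient in every effective representative of a class near $[\Delta_{S}]$. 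By the projection formula, $W\cdot\alpha=\int_{Y}\phi^{*}[W]\cdot\epsilon^{*}D_{1}\cdot\epsilon^{*}D_{2}$; decomposing $\phi^{*}[W]=[\widetilde{W}]+i_{*}\gamma_{W}$ as strict transform plus exceptional correction $\gamma_{W}\in g^{*}N^{1}(S)$, the strict-transform contribution is non-negative (deform $\epsilon^{*}D_{2}$ to an effective representative meeting $\widetilde{W}$ properly, then invoke nefness of $\epsilon^{*}D_{1}$). The correction contribution reduces on the $\mathbb{P}^{1}$-bundle $E$ to a pairing of $\gamma_{W}$ against $(2g^{*}H+b_{1}\xi)(2g^{*}A+b_{2}\xi)$, which by fiber integration amounts to an intersection with $2b_{2}H+2b_{1}A-b_{1}b_{2}K_{S}$ on $S$.

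The main obstacle will be the analysis of this correction term in the rigidity argument. Unlike the codimension-one case, the Chow pullback $\phi^{*}[W]$ of a codimension-two cycle does not coincide with the strict transform $[\widetilde{W}]$, and the class $\gamma_{W}$ depends delicately on how $W$ meets $\Delta_{S}$. Controlling the sign of this correction is precisely what requires the stronger hypothesis that $D_{1}$ is nef (rather than merely movable): the strict-transform term alone is insufficient, and the nefness of $\epsilon^{*}D_{1}$ must be used to dominate the potentially negative anticanonical contribution. Once this is achieved, Lemma~\ref{strongrigiditylem} completes the proof.
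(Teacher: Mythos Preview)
Your treatment of the first bullet is correct and essentially identical to the paper's: pulling $D_1,D_2$ back to $Y=\mathrm{Bl}_{\Delta_S}(S\times S)$, intersecting, and using that a product of two movable divisors is pseudoeffective. Whether you phrase the negative pairing as $\epsilon^*D_1\cdot\epsilon^*D_2\cdot\phi^*\Delta_S<0$ on $Y$ or as $\alpha\cdot\Delta_S<0$ on $S\times S$ is immaterial by the projection formula. (The paper computes directly on $E$ via $\phi^*\Delta_S=i_*(\xi-g^*K_S)$ and gets $4A\cdot H-b_1b_2c_2(S)$; your pushforward route gives the same number.)

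For the rigidity bullet, however, your proposal stops at exactly the hard point. You correctly observe that the correction term $i_*\gamma_W$ in $\phi^*[W]=[\widetilde W]+i_*\gamma_W$ is what makes the direct inequality $\alpha\cdot W\ge 0$ delicate, but you do not explain how nefness of $D_1$ actually controls it; the sentence ``the nefness of $\epsilon^*D_1$ must be used to dominate the potentially negative anticanonical contribution'' is an aspiration, not an argument. In fact there is no reason to expect $\gamma_W$ to pair favorably with $2b_2H+2b_1A-b_1b_2K_S$ for arbitrary $W$, and trying to force this directly leads nowhere.

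The paper sidesteps this entirely. The point of assuming $D_1$ nef (rather than merely movable) is that then $\epsilon^*D_1\cdot\epsilon^*D_2$ is a \emph{nef} $2$-class on $Y$, not just pseudoeffective: restricting a movable divisor to any surface yields a pseudoeffective class, against which the nef $\epsilon^*D_1$ pairs non-negatively. The negative intersection with $\phi^*\Delta_S$ therefore shows that $\phi^*\Delta_S$ is \emph{not pseudoeffective} on $Y$. At that point one invokes Proposition~\ref{blowuprigidity} as a black box: that proposition already contains the delicate analysis of how effective cycles near $[\Delta_S]$ must acquire a $\Delta_S$-component, via an auxiliary nef curve class $\widetilde\eta=g_*(\eta|_E)$ on $S$ and the divisor $E\cdot\phi^*\pi_1^*\widetilde\eta$. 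You should not try to re-derive that mechanism inside the proof of Theorem~\ref{hilb2thrm}; the correct structure is the two-step argument ``$\phi^*\Delta_S$ not pseudoeffective $\Rightarrow$ apply Proposition~\ref{blowuprigidity}''.
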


\begin{proof}
Let $\phi: Y \to S \times S$ be the blow-up along the diagonal.  The exceptional divisor $E$ is isomorphic to $\mathbb{P}(\Omega^{1}_{S})$ with projection $g: E \to S$.  Letting $\xi$ denote the class of the relative $\mathcal{O}(1)$ and $i: E \to Y$ the injection, we have that $\phi^{*}\Delta_{S} = i_{*}(\xi - g^{*}K_{S})$.

Let $\psi: Y \to \Hilb^{2}(S)$ denote the $2:1$-map.  Then we  compute intersections by restricting to $E$:
\begin{align*}
\psi^{*}D_{1} \cdot \psi^{*}D_{2} \cdot \phi^{*}\Delta_{S} & = (2g^{*}H+b_{1}\xi) \cdot (2g^{*}A+b_{2}\xi) \cdot (\xi - g^{*}K_{S}) \\
& = -b_{1}b_{2}c_2(S) + 4 A \cdot H
\end{align*}

First suppose that $D_{1}$ and $D_{2}$ are movable and the inequality holds.  Since $\psi$ is finite, $\psi^{*}D_{1}$ and $\psi^{*}D_{2}$ are also movable, and hence their intersection is pseudoeffective.  The assumed inequality shows that $\psi^{*}D_{1} \cdot \psi^{*}D_{2} \cdot \phi^{*}\Delta_{S} < 0$, so that $\Delta$ is not nef.

Next suppose that $D_{1}$ is nef and $D_{2}$ is movable.  Then $\psi^{*}D_{1} \cdot \psi^{*}D_{2}$ is nef and by the same calculation as before we deduce that $\phi^{*}\Delta_{S}$ is not pseudoeffective.  By Proposition \ref{blowuprigidity}  $\Delta_{S}$ is strongly numerically rigid.
\end{proof}

It would be interesting if Theorem \ref{hilb2thrm} could be improved by a more in-depth study of the geometry of the Hilbert scheme $\Hilb^{2}(S)$.

\subsection{Albanese map} \label{albanesesection}

Let $X$ be a smooth projective variety and let $alb: X \to A$ be the Albanese map (for a chosen basepoint).  By the subtraction map for $X$, we mean the composition of $alb^{\times 2}: X \times X \to A \times A$ with the subtraction map for $A$.  Note that this map does not depend on the choice of basepoint.

\begin{prop}
Let $X$ be a smooth projective variety of dimension $n$.  Suppose that the Albanese map $alb: X \to A$ is generically finite onto its image but is not surjective.  Then $\Delta_{X}$ is exceptional for the subtraction map.
\end{prop}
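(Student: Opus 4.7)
My plan is to translate exceptionality into a dimension inequality about the image of the subtraction map $s \colon X \times X \to A$, and then to establish that inequality by exploiting the universal property of the Albanese.

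Setting $Y := alb(X)$, which has dimension $n$ since $alb$ is generically finite onto its image, one has $s(X \times X) = Y - Y$, while the diagonal maps to the single point $0 \in A$. Consequently $\reldim(s|_{\Delta_X}) = n$ and $\reldim(s) = 2n - \dim(Y-Y)$, so exceptionality is equivalent to the strict inequality $\dim(Y - Y) > n$. The opposite inequality $\dim(Y-Y) \geq n$ is automatic from the inclusion $Y - y_0 \subseteq Y - Y$ (for any $y_0 \in Y$) of an $n$-dimensional irreducible set, so the goal is to rule out equality.

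Assume for contradiction that $\dim(Y - Y) = n$. Since $Y - Y$ itself is irreducible and each $Y - y_0$ is an irreducible $n$-dimensional subset, I obtain $Y - y_0 = Y - Y$ for every $y_0 \in Y$, hence $Y - y_0 = Y - y_0'$ for all pairs $y_0, y_0' \in Y$. Rewriting this as $Y + (y_0 - y_0') = Y$ shows that every element of $Y - Y$ lies in the translation stabilizer $\mathrm{Stab}(Y) := \{a \in A \colon Y + a = Y\}$.

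Let $B \subseteq A$ denote the identity component of $\mathrm{Stab}(Y)$, which is an abelian subvariety. Because $Y - Y$ is irreducible and contains $0 \in B$, we have $Y - Y \subseteq B$; after choosing the basepoint so $alb(x_0) = 0$, this upgrades to $Y \subseteq B$. The main delicate step is then to invoke the universal property of the Albanese to force $B = A$: if $B$ were a proper abelian subvariety containing $alb(X)$, then the composition $X \to A \to A/B$ would be identically zero, and the uniqueness clause of the universal property applied to this constant map would identify the nontrivial projection $A \to A/B$ with the zero homomorphism—impossible unless $A/B = 0$. Granted $B = A$, the variety $Y$ is stable under all translations of $A$, hence $Y = A$, contradicting the assumed non-surjectivity of $alb$.
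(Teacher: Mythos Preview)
Your proof is correct and follows essentially the same route as the paper: both reduce exceptionality to the inequality $\dim(Y-Y)>n$, assume equality to deduce that every element of $Y-Y$ stabilizes $Y$, and then conclude $Y=A$ from the fact that the Albanese image generates $A$. The only cosmetic difference is that the paper phrases the stabilizer step through fiber dimensions (each fiber of the subtraction map projects isomorphically into $Y$, forcing $Y+p=Y$) rather than your comparison of irreducible subsets $Y-y_0\subseteq Y-Y$, and you spell out the final universal-property argument more carefully than the paper, which simply asserts that the stabilizer must be all of $A$.
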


\begin{proof}
Note that the diagonal is contracted to a point by the subtraction map.  Thus, it suffices to prove that a general fiber of the subtraction map $f: X \times X \to A$ has dimension $< n$.  Let $X'$ denote the image of the albanese map.  Since $alb$ is generically finite onto its image, it suffices to prove that the general fiber of the subtraction map $f: X' \times X' \to A$ has dimension $< n$.

Suppose otherwise for a contradiction.   For every closed point $p \in f(X' \times X')$ the fiber $F_{p}$ denotes pairs of points $(x_1,x_2) \in X' \times X'$ such that $x_1 = p + x_2$.  If this has dimension $n$, then it must dominate $X'$ under both projections.  In other words, $X'$ is taken to itself under translation by every point of $f(X' \times X')$.  Recall that $X'$ contains the identity of $A$, so that in particular $X' \subseteq f(X' \times X')$.  Thus, the subgroup of $A$ fixing $X'$ is all of $A$.  This is a contradiction when $X' \neq A$.
\end{proof}

There are many other results of a similar flavor.  For example, if the diagonal is the only subvariety of dimension $\geq n$ contracted by the Albanese map then $\Delta_{X}$ is strongly numerically rigid using arguments similar to those of \cite[Theorem 4.15]{fl15b}.  This situation holds for every curve of genus $\geq 2$ and seems to hold often in higher dimensions as well.

\section{Surfaces}

We now discuss positivity of the diagonal for smooth surfaces.  First, by combining Theorem \ref{qpg} with Corollary \ref{coro:upsefrank} (and using the equality of homological and numerical equivalence for surface classes) we obtain: 

\begin{thrm}
The only smooth projective surfaces with big and nef diagonal are the projective plane and fake projective planes.
\end{thrm}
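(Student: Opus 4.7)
The plan is to assemble the two cohomological constraints furnished by the preceding results and show that they force $S$ to have the Betti numbers of $\mathbb{P}^2$, which by the paper's definition immediately identifies $S$ as either $\mathbb{P}^2$ or a fake projective plane.

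First I would invoke Theorem \ref{qpg}: since $\Delta_S$ is big and homological/numerical equivalence coincide for surface classes (so bigness and homological bigness agree on $S \times S$ as noted in Section \ref{backgroundsec}), we obtain $H^{1,0}(S) = H^{2,0}(S) = 0$. In other words, $q(S) = p_g(S) = 0$. Next, since $\Delta_S$ is big and nef, Corollary \ref{coro:upsefrank}(1) gives $N^1(S) \cong \mathbb{R}$, so the Picard number satisfies $\rho(S) = 1$.

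The third step converts these into Betti numbers. Because $p_g(S) = 0$ we have $H^2(S, \mathbb{C}) = H^{1,1}(S)$, so every rational cohomology class of degree $2$ is of type $(1,1)$. By the Lefschetz $(1,1)$-theorem, $NS(S) \otimes \mathbb{Q} = H^2(S, \mathbb{Q}) \cap H^{1,1}(S) = H^2(S, \mathbb{Q})$, hence $b_2(S) = \rho(S) = 1$. Combined with $b_1(S) = 2q(S) = 0$ and Poincaré duality, this yields $b_0 = b_2 = b_4 = 1$ and $b_1 = b_3 = 0$, so $S$ has the same rational cohomology as $\mathbb{P}^2$.

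By the definition of fake projective plane recalled in the introduction (a smooth surface $\neq \mathbb{P}^2$ with the Betti numbers of $\mathbb{P}^2$), $S$ is either $\mathbb{P}^2$ or a fake projective plane. There is no real obstacle here: the entire argument is a direct assembly of Theorem \ref{qpg} and Corollary \ref{coro:upsefrank}(1), together with the elementary observation that the vanishing of $p_g$ forces $b_2 = \rho$ via Lefschetz $(1,1)$. The only subtlety worth flagging in the writeup is the appeal to Lieberman's theorem to justify that bigness of $\Delta_S$ implies homological bigness, which is needed to feed into Theorem \ref{qpg}.
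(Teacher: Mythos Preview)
Your proposal is correct and follows essentially the same approach as the paper: the paper's own proof is the single sentence ``by combining Theorem \ref{qpg} with Corollary \ref{coro:upsefrank} (and using the equality of homological and numerical equivalence for surface classes) we obtain [the theorem],'' and your write-up simply unpacks this by making explicit the passage from $p_g=0$ and $\rho=1$ to $b_2=1$ via the Lefschetz $(1,1)$-theorem.
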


In this section we discuss each Kodaira dimension in turn.  We can summarize the discussion as follows:

\begin{itemize}
\item The only possible surfaces with big diagonal are $\mathbb{P}^{2}$ or a surface of general type satisfying $p_{g} = q = 0$.  In the latter case, the only example with big diagonal that we know of is a fake projective plane.
\item If the Kodaira dimension of $X$ is at most $1$, then $\Delta_{X}$ is nef if and only if $X$ has nef tangent bundle, with the exception of some properly elliptic surfaces which admit no section.  Surfaces with nef tangent bundle are classified by \cite{cp91}.
\end{itemize}

Note that any surface with nef diagonal must be minimal by Proposition \ref{nefimpliesnefcycles}.

\subsection{Kodaira dimension $-\infty$}

\begin{prop}
Let $X$ be a smooth surface of Kodaira dimension $-\infty$.  Then
\begin{enumerate}[(1)]
\item $\Delta_{X}$ is big if and only if $X = \mathbb{P}^{2}$.
\item $\Delta_{X}$ is nef if and only if $X$ has nef tangent bundle, or equivalently, if $X$ is either $\PP^2$, $\PP^1\times \PP^1$, or a projective bundle $\PP(\mathcal E)$ over an elliptic curve where $\mathcal{E}$ is either an unsplit vector bundle or (a twist of) a direct sum of two degree $0$ line bundles.
\end{enumerate}
\end{prop}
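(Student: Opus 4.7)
For part (1), my plan is to combine Theorem \ref{qpg} with Corollary \ref{bigmorcor}. Since numerical and homological equivalence coincide on surfaces, Theorem \ref{qpg} applies to give $h^{1,0}(X) = h^{2,0}(X) = 0$, so that by Castelnuovo's rationality criterion (together with $\kappa(X) = -\infty$) the surface $X$ must be rational. Corollary \ref{bigmorcor} then rules out any surjective morphism from $X$ onto a curve. I would finish by observing that every smooth rational surface other than $\mathbb{P}^2$ admits such a morphism: minimal rational surfaces are either $\mathbb{P}^2$ or a Hirzebruch surface $\mathbb{F}_n$ (with its natural ruling to $\mathbb{P}^1$), and in the non-minimal case $X$ has a birational morphism to a minimal model, through which $X$ either reaches some $\mathbb{F}_n$ directly, or else factors through the intermediate surface $\mathbb{F}_1 = \mathrm{Bl}_p(\mathbb{P}^2) \to \mathbb{P}^1$. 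This forces $X = \mathbb{P}^2$.

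For part (2), the $(\Leftarrow)$ direction is the observation (already recorded in the introduction) that nef $T_X$ yields nef $\Delta_X$, combined with the Campana--Peternell classification \cite{cp91} of surfaces of Kodaira dimension $-\infty$ with nef tangent bundle, which gives exactly the list in the statement. For the $(\Rightarrow)$ direction, my plan is as follows. Applying Proposition \ref{nefimpliesnefcycles} to $\Delta_X$ shows that every pseudoeffective class on $X$ is nef; in particular $X$ contains no irreducible curve with negative self-intersection, and is therefore minimal. The minimal surfaces of Kodaira dimension $-\infty$ are $\mathbb{P}^2$, the Hirzebruch surfaces $\mathbb{F}_n$ ($n \geq 0$), and the $\mathbb{P}^1$-bundles $\pi : X \to C$ over a smooth curve $C$ of genus $\geq 1$. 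The case $\mathbb{F}_n$ with $n \geq 1$ is eliminated by the $(-n)$-section, and the case $g(C) \geq 2$ by Lemma \ref{neflowgenus}.

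The remaining case is $X = \mathbb{P}(\mathcal{E}) \to C$ with $C$ elliptic, and I would show that $\mathcal{E}$ must be semistable. If not, the Harder--Narasimhan quotient $\mathcal{E} \twoheadrightarrow \mathcal{L}_2$ with $\deg \mathcal{L}_2 < \mu(\mathcal{E})$ yields a section $\sigma$ of $\pi$ with $\sigma^2 = 2\deg \mathcal{L}_2 - \deg \mathcal{E} < 0$, contradicting the minimality/nefness established above. By Atiyah's classification of vector bundles on an elliptic curve, a semistable rank-$2$ bundle is, after twisting by a line bundle, either indecomposable or a direct sum of two line bundles of the same (hence zero) degree. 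These are precisely the projectivizations listed in the statement, and each has nef tangent bundle by \cite{cp91}.

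The step I expect to be the main obstacle is the elliptic case: one needs to carefully extract semistability of $\mathcal{E}$ from the geometry of $X$, and then match Atiyah's classification of semistable bundles to the list in the statement while keeping track of twisting conventions for $\mathbb{P}(\mathcal{E})$. Both steps are classical, and the remaining cases are short applications of the tools developed earlier in the excerpt.
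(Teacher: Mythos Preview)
Your proposal is correct and follows essentially the same route as the paper. For part (1), the paper is slightly more direct: rather than invoking Theorem \ref{qpg} and Castelnuovo's criterion to first reduce to rational surfaces, it simply passes to a minimal model of the given $\kappa=-\infty$ surface, which is either $\mathbb{P}^2$ or a geometrically ruled surface over some curve (of arbitrary genus), and in the latter case visibly maps onto a curve; the non-minimal case over $\mathbb{P}^2$ is handled exactly as you do, by factoring through $\mathbb{F}_1$. Your detour through Theorem \ref{qpg} is valid but unnecessary. For part (2), your argument is the paper's argument with the elliptic-base case (semistability of $\mathcal{E}$ via absence of negative sections, then Atiyah) spelled out in more detail than the paper, which simply appeals to ``the classification'' together with Proposition \ref{nefimpliesnefcycles} and Lemma \ref{neflowgenus}.
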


\begin{proof}
(1) Let $S$ be a smooth uniruled surface and let $g: S \to T$ be a map to a minimal model.  If $T$ is not $\mathbb{P}^{2}$, then $T$ (and hence also $S$) admits a surjective morphism to a curve.  If $T = \mathbb{P}^{2}$ and $g$ is not an isomorphism, then $g$ factors through the blow up of $\mathbb{P}^{2}$ at a point, which also admits a surjective morphism to a curve.  In either case Corollary \ref{bigmorcor} shows that the diagonal of $S$ is not big.

(2) We only need to consider minimal surfaces.  Using the classification, we see that any minimal ruled surface besides the ones listed carries a curve with negative self-intersection or maps to a curve of genus $\geq 2$.  By Proposition \ref{nefimpliesnefcycles} and Lemma \ref{neflowgenus} such surfaces can not have nef diagonal.
\end{proof}

\subsection{Kodaira dimension $0$}

\begin{prop}
The diagonal of a surface of Kodaira dimension $0$ is not big.
\end{prop}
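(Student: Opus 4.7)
The plan is to reduce to the case of a minimal surface and then argue according to the four types in the Enriques--Kodaira classification of minimal surfaces of Kodaira dimension $0$.

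First I would reduce to the minimal case. If $X$ is a smooth projective surface of Kodaira dimension $0$ and $g : X \to X_{\min}$ is a birational morphism to its minimal model, then Lemma \ref{pushforwardpreservesbigdiag} shows that if $\Delta_{X}$ were big then $\Delta_{X_{\min}}$ would be big as well. Thus it suffices to handle the four classes of minimal surfaces of Kodaira dimension $0$: abelian surfaces, K3 surfaces, Enriques surfaces, and bielliptic surfaces.

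For abelian surfaces and K3 surfaces we have $h^{2,0}(X) = 1$, so $H^{2,0}(X) \neq 0$. Since cohomological and numerical equivalence coincide for surfaces (as noted in the background section), bigness of $\Delta_{X}$ would be the same as homological bigness, which by Theorem \ref{qpg} forces $H^{k,0}(X) = 0$ for all $k > 0$. This rules out both cases. For Enriques surfaces and bielliptic surfaces, instead I would exhibit a surjective morphism to a positive-dimensional variety of strictly smaller dimension. Every Enriques surface admits an elliptic fibration onto $\mathbb{P}^{1}$, and every bielliptic surface admits, by definition, a surjection onto an elliptic curve. In each case Corollary \ref{bigmorcor} directly gives that $\Delta_{X}$ is not big.

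There is no real obstacle here beyond assembling these ingredients; the only point that requires a little care is the reduction to the minimal model, which rests cleanly on Lemma \ref{pushforwardpreservesbigdiag}. Once that is in place the classification disposes of the abelian and K3 cases via Theorem \ref{qpg} (using the nontriviality of $H^{2,0}$) and disposes of the remaining Enriques and bielliptic cases via the existence of a fibration to a lower-dimensional base together with Corollary \ref{bigmorcor}.
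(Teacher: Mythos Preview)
Your proof is correct and follows essentially the same approach as the paper: reduce to the minimal model via Lemma \ref{pushforwardpreservesbigdiag}, then use Theorem \ref{qpg} together with the classification and an elliptic fibration for the Enriques case. The only minor difference is that the paper dispatches bielliptic surfaces via Theorem \ref{qpg} (since $q=1$ gives $H^{1,0}\neq 0$) rather than via the fibration, but both arguments work equally well.
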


\begin{proof}
By Lemma \ref{pushforwardpreservesbigdiag} it suffices to prove this for minimal surfaces.  Using the classification and Theorem \ref{qpg}, the only surface which could have a big diagonal would be an Enriques surface.  However, such surfaces always admit a map to $\mathbb{P}^{1}$ and thus can not have big diagonal by Lemma \ref{bigimpliesbigupsef}.
\end{proof}

We next turn to nefness of the diagonal.  Recall that any surface with nef diagonal must be minimal, and we argue case by case using classification.  Abelian surfaces and hyperelliptic surfaces both have nef tangent bundles, and thus nef diagonal.  For K3 surfaces, Theorem \ref{k3surfacenotnef} below verifies that the diagonal is never nef.

Finally, any Enriques surface admits an ample divisor $D$ with $D^2=2$ which defines a double ramified cover. Hence there is an involution $i:S\to S$ exchanging the two sheets. Then if $\Gamma_i$ is the graph, we have
$
\Delta_{S} \cdot \Gamma_i=-C^2<0.
$, and so $\Delta_{S}$ is not nef.

\subsubsection{K3 surfaces}

K3 surfaces are perhaps the most interesting example, and in this subsection we discuss them at some length.  We first discuss nefness, and we start with a couple low degree examples.

\begin{exmple} \label{deg2k3}
Let $S\to \PP^2$ be a degree 2 K3 surface. As for the Enriques surface, there is an involution $i:S\to S$, and intersecting $\Delta_{S}$ with the graph of the involution gives a negative number, so $\Delta_{S}$ is not nef.
\end{exmple}

\begin{exmple}\label{quarticcomputation}
Let $S$ be a surface in $\PP^3$, and let $W=\widetilde{S\times S}$ be the blow-up along the diagonal. Consider the divisor $H_1+H_2-E$, where $H_i$ is the pullback of the hyperplane section via the $i$-th projection. This divisor is base-point free, and defines a morphism
$$
\phi:W\to Gr(2,4).
$$
Geometrically, this is the morphism obtained by sending a pair of points on $S$ to the line they span; it is finite when $S$ contains no lines.

Now suppose that $S$ is a quartic K3 surface. Then
$$
(H_1+H_2-E)^2\pi^*\Delta_{S}=(H_1+H_2+\mathcal O(1))^2 \mathcal O(1)=(2H)^2-24=-8.
$$In particular, $\Delta_{S}$ has negative intersection with the images of the fibers of $\phi$.\end{exmple}

The previous example shows how knowledge of the nef cone of the blow-up $\widetilde{S\times S}$, or equivalently, $\Hilb^{2}(S)$, can be used to produce interesting subvarieties of $S\times S$ having negative intersection with $\Delta_{S}$. By the work of Bayer--Macr{\`{\i}, we can use similar arguments also for higher degrees.
\begin{thrm} \label{k3surfacenotnef}
Let $S$ be a K3 surface.  Then $\Delta_{S}$ is not nef.
\end{thrm}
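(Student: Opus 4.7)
The plan is to apply Theorem \ref{hilb2thrm}, which reduces the question to finding suitable movable (resp. nef) classes on $\Hilb^{2}(S)$. Take $A = H$ to be an ample divisor on $S$. Since $c_{2}(S) = 24$ for any K3 surface, the criterion of Theorem \ref{hilb2thrm} becomes: produce movable classes $D_{1} = H^{[2]} - b_{1}B'$ and $D_{2} = H^{[2]} - b_{2}B'$ on $\Hilb^{2}(S)$ with $b_{1}b_{2} > H^{2}/6$. Note Examples \ref{deg2k3} and \ref{quarticcomputation} already dispose of the cases $H^{2} = 2$ (via the covering involution) and $H^{2} = 4$ (via the explicit computation of $(H^{[2]} - B')^{2} \cdot \phi^{*}\Delta_{S} < 0$, which corresponds to $b_{1} = b_{2} = 1$).

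For the general case, the key input is the Bayer--Macr\`i description of the movable cone of $\Hilb^{2}(S)$ in terms of the extended Mukai lattice of $S$. When $S$ has Picard rank one and $H^{2} = 2d$, the Neron--Severi space of $\Hilb^{2}(S)$ is two-dimensional, spanned by $H^{[2]}$ and $B'$ with Beauville--Bogomolov form $q(xH^{[2]} - yB') = 2dx^{2} - 2y^{2}$. The Bayer--Macr\`i analysis of walls shows that the boundary of the movable cone in this plane is attained at a ratio $b_{\max}/1$ determined by the smallest positive integral solutions of a Pell-like equation $dx^{2} - y^{2} = \pm 1$ (or by an isotropic direction giving a Lagrangian fibration, when present). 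In either case $b_{\max}^{2}$ is comparable to $d = H^{2}/2$, which is substantially larger than the threshold $H^{2}/6$ needed to apply Theorem \ref{hilb2thrm}. A short arithmetic check rules out the finitely many small-$d$ cases where the Pell bound might fail to clear $H^{2}/6$.

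It remains to handle K3 surfaces of Picard rank $\geq 2$. Since any K3 surface $S$ with chosen polarization $H$ deforms in a family whose general member is still a K3 surface (retaining $H$ as a quasi-polarization) and to which our Picard-rank-one analysis applies, one can appeal to Proposition \ref{deformation} to transport the required effective/movable class on $\Hilb^{2}(S_{t})$ to the special fiber, yielding the desired negative intersection with $\Delta_{S}$. Alternatively, the Bayer--Macr\`i description works uniformly in all Picard ranks and directly gives a movable $H^{[2]} - bB'$ with $b$ close to $\sqrt{H^{2}/2}$.

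The main obstacle is carrying out the Bayer--Macr\`i bookkeeping and verifying the inequality $b^{2} > H^{2}/6$ uniformly for all degrees; the small-degree cases, where the Pell approximation is least favorable, must either be dispatched by direct geometric constructions (as in the quartic case) or by slightly different Hilbert-scheme arguments exploiting special features of those surfaces.
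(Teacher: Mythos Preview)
Your approach is essentially the same as the paper's: the paper proves Lemma~\ref{notnefk3} for Picard-rank-one K3s of degree $d\ge 4$ by invoking \cite[Proposition~13.1]{bm14} to get $H^{[2]}-b_dB'$ movable with $b_d=\sqrt{d/2-1}$, checks $b_d^2>4d/24$, and applies Theorem~\ref{hilb2thrm}; it then passes to arbitrary Picard rank by the deformation argument via Proposition~\ref{deformation}, with degree $2$ handled separately by Example~\ref{deg2k3}. Your Pell equation is stated in a slightly different (dual) form, but the content and the final inequality $b^2\sim H^2/2 > H^2/6$ are the same.
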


We first prove a special case:

\begin{lem} \label{notnefk3}
Let $S$ be a K3 surface of Picard rank $1$ polarized by an ample divisor $H$ of degree $d \geq 4$.  Then $\Delta_{S}$ is not nef.
\end{lem}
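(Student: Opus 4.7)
The plan is to apply Theorem \ref{hilb2thrm} with $A = H$. Since $c_2(S) = 24$ for a K3 surface, the numerical hypothesis of that theorem reduces to $b_1 b_2 > 4d/24 = d/6$, so it suffices to produce $b_1, b_2 > 0$ with $b_1 b_2 > d/6$ such that both divisors $D_i := H^{[2]} - b_i B'$ on $\Hilb^2(S)$ are movable.

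Because $\Pic(S) = \mathbb{Z}\langle H \rangle$, the Picard group $\Pic(\Hilb^2(S))$ has rank $2$, with basis $H^{[2]}, B'$ orthogonal under the Beauville-Bogomolov form and satisfying $(H^{[2]})^2 = d$, $(B')^2 = -2$. Along the ray $b \mapsto H^{[2]} - bB'$ the positive cone is thus $b \in [0, \sqrt{d/2})$. Locating the movable cone inside the positive cone is the content of Bayer-Macrì's theory \cite{bm14}: the movable cone is cut out of the positive cone by finitely many walls associated to spherical ($w^2 = -2$) or isotropic ($w^2 = 0$) Mukai vectors of the form $w = (r, aH, s)$, and for Picard rank $1$ the existence and position of these walls are governed by Pell-type equations in $d$.

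For $d = 4$ the claim is already essentially verified in Example \ref{quarticcomputation}, where $H^{[2]} - B'$ is shown to be base-point free (inducing the finite map $\Hilb^2(S) \to Gr(2,4)$); one takes $b_1 = b_2 = 1$ and checks $1 > 2/3 = d/6$. For $d \geq 6$ the main task, and the main obstacle, is to verify that the movable cone of $\Hilb^2(S)$ always reaches past $b = \sqrt{d/6}$. This reduces to showing that no spherical or isotropic wall blocks the positive cone at a value of $b$ smaller than $\sqrt{d/6}$, which is a finite lattice computation in the Mukai lattice amounting to bounding the smallest solutions of the relevant Pell equations. Once such a $b > \sqrt{d/6}$ is produced inside the movable cone, the choice $b_1 = b_2 = b$ completes the proof.
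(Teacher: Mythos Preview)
Your strategy is exactly the paper's: apply Theorem \ref{hilb2thrm} with $A=H$, reducing to the production of a movable class $H^{[2]} - bB'$ with $b^{2} > d/6$, and invoke Bayer--Macr{\`\i} to locate the movable boundary. The difference is that you stop precisely at the point where the actual work begins: you assert that checking the movable cone extends past $b=\sqrt{d/6}$ is ``a finite lattice computation'' and then declare the proof complete without performing it. That is the entire content of the lemma, and as written your argument is a plan rather than a proof.

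The paper closes this gap with a short explicit estimate. If $d/2$ is not a perfect square, the fundamental solution $(x,y)$ of $x^{2}-(d/2)y^{2}=1$ has $x\geq \sqrt{d/2}$, hence
\[
\frac{y}{x}=\sqrt{\tfrac{2}{d}}\sqrt{1-\tfrac{1}{x^{2}}}\geq \sqrt{\tfrac{2}{d}}\sqrt{\tfrac{d-2}{d}},
\]
so that $b_{d}:=\sqrt{d/2-1}\leq \tfrac{d}{2}\cdot\tfrac{y}{x}$. By \cite[Proposition 13.1]{bm14} (which covers the square case as well), $H^{[2]}-b_{d}B'$ is movable, and the inequality $b_{d}^{2}=d/2-1>d/6$ is immediate for $d\geq 4$. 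This single uniform bound also handles $d=4$, so the separate appeal to Example \ref{quarticcomputation} is unnecessary.
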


\begin{proof}
We start by recalling the results of \cite{bm14} on the geometry of $\Hilb^{2}(S)$.  Suppose that $d/2$ is not a square.  It is clear that the fundamental solution to the Pell's equation $x^{2} - (d/2)y^{2} =1$ must have $x \geq \sqrt{d/2}$, so that the fundamental solution yields a ratio
\begin{equation*}
\frac{y}{x} = \sqrt{ \frac{2}{d} } \sqrt{ 1 - \frac{1}{x^{2}}} \geq \sqrt{ \frac{2}{d} } \sqrt{ \frac{d-2}{d}}.
\end{equation*}
Set $b_{d}  = \sqrt{\frac{d}{2} - 1} \leq \frac{d}{2} \cdot \frac{y}{x}$. 
Applying \cite[Proposition 13.1]{bm14}, we see that (whether or not $d/2$ is a square) the divisor class $H'-b_{d}B$ is movable on $\Hilb^{2}(S)$, where $H'$ is induced by the symmetric power of $H$ and $2B$ is the exceptional divisor for the Hilbert-Chow morphism.

We then apply Theorem \ref{hilb2thrm}.  The only verification necessary is:
\begin{equation*}
b_{d}^{2} = \frac{d}{2} - 1 > \frac{4d}{24}
\end{equation*}
which holds for $d$ in our range.
\end{proof}

In fact, the previous proof gives a little more: over the family of degree $d$ K3 surfaces, we have a class on the total space which restricts to be effective on a very general K3 surface and which has constant negative intersection against $\Delta_{S}$ for such surfaces.  Applying Proposition \ref{deformation}, we can take limits to deduce that for {\em every} K3 surface in the family, $\Delta_{S}$ has negative intersection against a pseudoeffective class.  This concludes the proof of Theorem \ref{k3surfacenotnef} in degree $\geq 4$, and we have already done the degree $2$ case in Example \ref{deg2k3}.

\subsubsection{Rigidity for K3 surfaces}

By again appealing to the results of \cite{bm14}, we can show rigidity under certain situations.

\begin{prop}
Let $S$ be a K3 surface of Picard number $1$ and degree $d$.  Suppose that the Pell's equation
\begin{equation*}
x^{2} - 2dy^{2} = 5
\end{equation*}
has no solutions.  Then the diagonal is strongly numerically rigid.
\end{prop}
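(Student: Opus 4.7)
The plan is to invoke the second bullet of Theorem~\ref{hilb2thrm} with $A = H$. Since $c_{2}(S) = 24$ and $A \cdot H = d$, the sufficient condition for strong numerical rigidity of $\Delta_{S}$ reduces to producing $b_{1}, b_{2} > 0$ with $D_{1} = H^{[2]} - b_{1} B'$ nef and $D_{2} = H^{[2]} - b_{2} B'$ movable on $\Hilb^{2}(S)$, subject to $b_{1} b_{2} > d/6$. The whole task is therefore to push both the nef and movable cones as far as possible into the positive cone, and the Pell hypothesis is what makes this feasible.

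Since $S$ has Picard rank one, $\mathrm{NS}(\Hilb^{2}(S))$ has rank two with basis $H^{[2]}, B'$ and Beauville--Bogomolov--Fujiki form given by $(H^{[2]})^{2} = d$, $(B')^{2} = -2$, $H^{[2]} \cdot B' = 0$; the positive cone is the half-plane spanned by classes $H^{[2]} - bB'$ with $0 \le b < \sqrt{d/2}$. By the wall-crossing classification of Bayer and Macr\`\i{} \cite{bm14}, the internal chamber structure of the positive cone is controlled by primitive Mukai vectors $a = (r, \ell H, s)$ in the algebraic Mukai lattice of $S$, through their Mukai pairing with the Hilbert scheme's Mukai vector $v = (1, 0, -1)$. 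A short computation shows that a spherical Mukai vector (one with $a^{2} = \ell^{2}d - 2rs = -2$) satisfying $(a, v) = r - s = 1$ rearranges to $(2s+1)^{2} - 2d\ell^{2} = 5$: such vectors are in bijection with integer solutions $(x, y) = (2s+1, \ell)$ of $x^{2} - 2dy^{2} = 5$.

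Spherical vectors with $(a, v) = \pm 1$ are precisely those generating flopping walls inside the movable cone of $\Hilb^{2}(S)$. Under our hypothesis no such vectors exist, so the nef cone and movable cone coincide on the $-B'$ side of $H^{[2]}$, sharing a common outer boundary at the Brill--Noether wall produced by a spherical vector with $(a, v) = 0$ (which corresponds to the always-solvable Pell equation $x^{2} - (d/2)y^{2} = 1$). The computation in the proof of Lemma~\ref{notnefk3} gives that this outer wall lies at $b_{\max} = (d/2)(y_{0}/x_{0})$ with $b_{\max}^{2} \ge d/2 - 1$. Taking $b_{1} = b_{2} = \sqrt{d/2 - 1}$ one has $D_{1}$ nef and $D_{2}$ movable, and
\[
b_{1} b_{2} \;=\; \frac{d}{2} - 1 \;>\; \frac{d}{6}
\]
whenever $d \ge 4$. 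Since K3 polarizations have even degree and the case $d = 2$ is excluded by the hypothesis (as $(x, y) = (3, 1)$ solves $x^{2} - 4y^{2} = 5$), Theorem~\ref{hilb2thrm} applies and $\Delta_{S}$ is strongly numerically rigid.

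The main obstacle is the careful appeal to \cite{bm14}: one must verify that spherical vectors with $(a, v) = \pm 1$ indeed define flopping walls lying strictly inside the positive cone (rather than coinciding with the exceptional divisor ray $B'$ or falling outside the positive cone entirely), and that their absence genuinely forces the nef and movable cones to agree on the far side of $H^{[2]}$. Both statements are instances of the wall-crossing dictionary of Bayer--Macr\`\i{}, but pinning them down requires precise referencing of the relevant propositions in \cite{bm14}.
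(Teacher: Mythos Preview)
Your proposal is correct and follows essentially the same approach as the paper's proof, which reads in its entirety: ``By combining \cite[Lemma 13.3]{bm14} with the calculation in the proof of Lemma~\ref{notnefk3}, we obtain the result from Theorem~\ref{hilb2thrm}.'' You have unpacked the content of \cite[Lemma 13.3]{bm14} by carrying out the Mukai-vector computation showing that spherical classes with $(a,v)=\pm 1$ correspond to solutions of $x^{2}-2dy^{2}=5$, and then combined this with the movable-cone bound $b_{d}^{2}\ge d/2-1$ from Lemma~\ref{notnefk3} and the criterion of Theorem~\ref{hilb2thrm}, exactly as the paper does.
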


For example, the theorem applies when the degree is divisible by $4$, or when the degree is less than $50$ except for degrees $2,10,22,38$.

\begin{proof}
By combining \cite[Lemma 13.3]{bm14} with the calculation in the proof of Lemma \ref{notnefk3}, we obtain the result from Theorem \ref{hilb2thrm}.
\end{proof}

Finally, we will prove that the diagonal of a very general K3 of degree $d$ is numerically rigid, using a deformation argument.

Standard results on K3 surfaces give the existence of a degree $d$ K3 surface $S_0$ which is also a quartic surface. It follows by the computation in Example \ref{quarticcomputation} that $\pi^*(\Delta_{S_0})$ is not pseudoeffective. Now take a family $\mathcal S\to T$ of polarized degree $d$ surfaces in a neighbourhood of $S_0$. Let $\pi:\widetilde{\S\times_{T} \S}\to \S\times_{T} \S$ be the blow-up of the diagonal. The induced family $\widetilde{\S\times_{T} \S}\to T$ is a smooth morphism. Consider the cycle class  $(\pi_t)^*(\Delta_{S_{t}})=(\pi^*\Delta_{\mathcal{S}/T})_t$. Since this is not pseudoeffective on the special fiber, $\pi^*(\Delta_{S_{t}})$ is not pseudoeffective for $t$ very general, by Proposition \ref{deformation}.  So applying again Proposition \ref{blowuprigidity}, we see that $\Delta$ is strongly numerically rigid on the very general K3 surface of degree $d$.

\begin{thrm}
Let $S$ be a very general polarized K3 surface. Then the diagonal is strongly numerically rigid.
\end{thrm}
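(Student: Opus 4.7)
The plan is to combine the explicit witness computation of Example \ref{quarticcomputation} with the deformation principle of Proposition \ref{deformation} and the blow-up criterion of Proposition \ref{blowuprigidity}. Concretely, I would fix the polarization degree $d$ and aim to show that for very general polarized K3 surfaces $S$ of degree $d$, the pullback $\phi^{*}[\Delta_{S}]$ to the blow-up of $S \times S$ along the diagonal fails to be pseudoeffective; strong numerical rigidity then follows immediately from Proposition \ref{blowuprigidity}.

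The first step is to produce a single polarized K3 surface $S_{0}$ of degree $d$ that also carries the structure of a smooth quartic surface in $\mathbb{P}^{3}$, so that Example \ref{quarticcomputation} becomes available on it. This requires $S_{0}$ to have Picard rank at least two, with Picard lattice primitively containing both a class $L$ of square $d$ (ample, defining the degree-$d$ polarization) and the class $H$ of the quartic embedding with $H^{2} = 4$; for each $d$ a lattice-polarized Noether--Lefschetz construction produces such an $S_{0}$, and some care is needed to ensure that both classes lie in the same chamber of the positive cone so that they are genuinely ample. Once $S_{0}$ is in hand, Example \ref{quarticcomputation} exhibits the nef class $\eta := (H_{1} + H_{2} - E)^{2}$ on $W_{0} := \widetilde{S_{0} \times S_{0}}$, pulled back from $\mathrm{Gr}(2,4)$, and computes $\eta \cdot \phi^{*}[\Delta_{S_{0}}] = -8$. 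Hence $\phi^{*}[\Delta_{S_{0}}]$ is not pseudoeffective on $W_{0}$.

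Next I would choose a smooth quasi-projective base $T$ and a smooth family $\mathcal{S} \to T$ of degree-$d$ polarized K3 surfaces with $S_{0}$ realized as a special fiber, and form the simultaneous blow-up $\widetilde{\mathcal{S} \times_{T} \mathcal{S}} \to T$ along the relative diagonal. The pullback of the relative diagonal class restricts on each fiber to $\phi_{t}^{*}[\Delta_{S_{t}}]$. To propagate non-pseudoeffectivity to the very general fiber I would apply the contrapositive of Proposition \ref{deformation}: if $\phi_{t}^{*}[\Delta_{S_{t}}]$ were pseudoeffective on a very general fiber, then for each rational $\epsilon > 0$ a suitable positive integer multiple of $\phi_{t}^{*}[\Delta_{S_{t}}] + \epsilon H^{2}$ (for an ample class $H$ on the total family) would be effective on the very general fiber, hence on every fiber by Proposition \ref{deformation}, hence pseudoeffective on $S_{0}$; letting $\epsilon \to 0$ and using closedness of the pseudoeffective cone would contradict the special-fiber computation above.

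The final step is just an invocation of Proposition \ref{blowuprigidity} on a very general fiber. The main obstacle in the plan is really the lattice-theoretic existence of $S_{0}$: one must verify, for every degree $d$, that the K3 lattice admits a primitive rank-two sublattice containing both a square-$d$ class and a square-$4$ class which simultaneously lie in a common ample chamber, and then invoke surjectivity of the period map to realize it as the Picard lattice of an actual projective K3 surface. Everything past that step is a formal application of results already proved in the paper.
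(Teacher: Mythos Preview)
Your proposal is correct and follows essentially the same route as the paper: produce a degree-$d$ K3 that is simultaneously a smooth quartic, use Example \ref{quarticcomputation} to see that $\phi^{*}[\Delta_{S_{0}}]$ is not pseudoeffective on the blow-up, spread this over a family via Proposition \ref{deformation}, and conclude with Proposition \ref{blowuprigidity}. Your $\epsilon$-perturbation argument makes the passage from effectivity (as in Proposition \ref{deformation}) to pseudoeffectivity more explicit than the paper does, and you are right to flag the lattice-theoretic existence of $S_{0}$ as the only point needing external input---the paper simply cites ``standard results on K3 surfaces'' here.
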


\def\K{\mathcal K}
A posterori, this result is intuitive in light of the Torelli theorem, at least for subvarieties of $S\times S$ which are graphs of self-maps $f:S\to S$: if $\Gamma$ is such a graph and $[\Gamma]=[\Delta]$, then $f$ induces the identity on $H^2(S,\ZZ)$, and hence has to be the identity, and so $\Gamma=\Delta_S$.

\subsection{Kodaira dimension $1$}
Let $S$ be a surface of Kodaira dimension 1 and let $\pi:S\to C$ be the canonical map. By Corollary \ref{bigmorcor}, we have:

\begin{cor}
A surface of Kodaira dimension $1$ does not have big diagonal.
\end{cor}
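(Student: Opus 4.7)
My plan is to apply Corollary \ref{bigmorcor} directly to the Iitaka fibration, with a small reduction to the minimal case to ensure we actually have a morphism (not merely a rational map) from $S$ to the base curve.

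First, I would reduce to the minimal case. By Lemma \ref{pushforwardpreservesbigdiag}, bigness of the diagonal is preserved under surjective morphisms, so if $\Delta_S$ were big, then $\Delta_{S_{\min}}$ would be big as well for the minimal model $S_{\min}$ of $S$. Hence it suffices to prove the statement assuming $S$ is minimal of Kodaira dimension $1$.

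Next, I would use the fact that a minimal surface of Kodaira dimension $1$ is a properly elliptic surface, and that for such $S$ the Iitaka fibration is a genuine morphism $\pi: S \to C$ to a smooth projective curve $C$ (this is the standard pluricanonical elliptic fibration coming from $|mK_S|$ for $m$ sufficiently large and divisible). In particular $\pi$ is surjective with $\dim C = 1 < 2 = \dim S$.

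Finally, Corollary \ref{bigmorcor} applied to $\pi: S \to C$ immediately yields that $\Delta_S$ is not big, which completes the proof. There is no real obstacle here: the only non-formal input is the existence of the elliptic fibration as an honest morphism on the minimal model, which is a classical fact from the Enriques--Kodaira classification. The entire argument is essentially a single invocation of Corollary \ref{bigmorcor}, with the pushforward lemma handling the non-minimal case.
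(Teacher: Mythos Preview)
Your proof is correct and follows essentially the same route as the paper: the paper simply invokes Corollary \ref{bigmorcor} for the canonical map $\pi:S\to C$. Your explicit reduction to the minimal model via Lemma \ref{pushforwardpreservesbigdiag} is a harmless extra step (and not strictly necessary, since composing the contraction to the minimal model with the elliptic fibration already gives a morphism from $S$ to a curve), but the core argument is identical.
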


We next show that the diagonal is not nef when $\pi$ admits a section.  As usual we may assume $S$ is minimal so that $K_{S}$ is proportional to some multiple of a general fiber of $\pi$.  Using Lemma \ref{neflowgenus}, we see that if the diagonal is nef then the base $C$ of the canonical map must have genus either $0$ or $1$.  If $T$ is a section of $\pi$, then by adjunction we see that $T^{2} < 0$, and so $\Delta_{S}$ is not nef (since as before $\Delta_{S} \cdot (\pi_{1}^{*}T \cdot \pi_{2}^{*} T)<0$).

\begin{exmple} \label{hyperellipticnefexample}
When $S\to C$ does not admit a section, it is possible for the diagonal to be nef. Indeed, let $E$ be an elliptic curve without complex multiplication and let $C$ be a hyperelliptic curve of genus $g$ which is very general in moduli.  The product $E \times C$ admits an involution $i$ which acts on $E$ as translation by a $2$-torsion point and on $C$ by the hyperelliptic involution. The quotient surface $S=(E\times C)/i$ is a properly elliptic surface of Kodaira dimension 1. The elliptic fibration $S\to C/i=\PP^1$ has a non-reduced fiber, and therefore can not admit a section. We claim that the diagonal of $S$ is nef. 

Let $S'=E\times C$ and let $\Gamma$ denote the graph of the involution $i$. By the projection formula it is enough to check that $\Delta_{S'}+\Gamma$ is nef on $S' \times S'$. Indeed, if $\pi:S'\to S$ is the quotient map, the map $\pi\times \pi$ is flat, and $(\pi\times \pi)^*\Delta_S=\Delta_{S'} +\Gamma$ is nef if and only if $\Delta_S$ is.

Let $f:S'\times S'\to C\times C$ denote the projection map $\pi_2\times \pi_2$.

\medskip

{\bf Claim:}
If an irreducible surface $T\subset E \times C \times E \times C$ is not nef, then it maps to a curve $D$ in $C \times C$ with negative self-intersection.  Furthermore it can only have negative intersection with surfaces contained in $f^{-1}(D)$.

\begin{proof}
It is clear that $T$ is nef if it maps to a point in $C \times C$.  We next prove nefness if $T$ maps dominantly onto $C \times C$.  Fix an irreducible surface $V$; we will show $T \cdot V \ge 0$.  It suffices to consider the case when $V$ is not a fiber of the map to $C \times C$.  In this situation we can deform $T$ using the abelian surface action so that it meets $V$ in a dimension $0$ subset.  Indeed, the set $Y\subset C\times C$ of points $y$ such that the fiber $T \cap f^{-1}(y)$ is $1$-dimensional is finite.  For a general translation in $E \times E$, this curve will meet $V \cap f^{-1}(y)$ in a finite set of points.  Next consider the open set $U = C \times C \backslash Y$.  Let $W \subset T$ be the subset lying over $U$.  We have a finite map from $E \times E \times W \to E \times E \times U$ given by $(a,b,w) \mapsto (a,b) \cdot w$.  In particular, the preimage of $V$ in $E \times E \times W$ will be a surface, and thus will meet a general fiber of $E \times E \times W \to E \times E$ properly.  Altogether we see a general translation of $T$ will meet $V$ properly.  Thus $T$ is nef.

Finally, suppose that $T$ maps to a curve $D$ and let $F=f^{-1}(D)$.  Suppose $V$ is a surface not contained in $F$.  Then $V \cdot T$ can be computed by restricting $V$ to $F$.  This restriction is effective, and hence nef (by the group action on $F$), showing that $T \cdot V \geq 0$.  If $V$ is also contained in $F$, then $V \cdot T$ in $X$ is the same as $f_*(v\cdot t)\cdot D$ on $C\times C$, where $V=i_*v$, $T=i_*t$. Note that $f_*(v\cdot t)$ will be a non-negative multiple of $D$ in $C\times C$. So if $D^2\ge 0$,  we again find that $T \cdot V \geq 0$. This completes the proof of the claim above.
\end{proof}

Note that if the diagonal $\Delta_S$ is not nef, there is a surface $T\subset S'\times S'$ with $T\cdot (\Gamma+\Delta_{S'})<0$. We must have either $\Gamma\cdot T<0$ or $\Delta_{S'} \cdot T<0$; replacing $T$ by $i(T)$, we may assume that the latter is the case. 

Arguing as above, we see $\Delta_{S'}$ and $T$ are both contained in the preimage $L$ of $\Delta_{C}$.   The intersection $\Gamma \cdot L$ is transversal; it consists of the points of $\Gamma$ over the 2-torsion points of $C$. In particular, the the restriction of $\Gamma$ to $L$ is numerically equivalent to 
\begin{equation*}
(\Delta_{C} \cdot \Gamma_{C})\Gamma_{E} = (2g+2) \Gamma_{E}
\end{equation*}
where $\Gamma_{E}$ is the pushforward of the graph of the involution from a fiber $E \times E$.  In contrast, $i^{*}i_{*}\Delta_{S}$ will be
\begin{equation*}
i^{*}i_{*}\Delta_{S} = (2-2g) \Delta_{E}
\end{equation*}
where $\Delta_{E}$ is the pushforward of the diagonal from a fiber $E \times E$.  Since $\Gamma_{E}$ and $\Delta_{E}$ are numerically proportional, we see that $\Delta_{S'} + \Gamma$ is nef when restricted to this threefold.  Hence its intersection with $T$ is non-negative, and so $\Delta_{S'}+\Gamma$ is nef overall.
\end{exmple}

\subsection{Surfaces of general type} \label{gentypesec}
\subsubsection{Surfaces with vanishing genus}

By Castelnuovo's formula, a surface of general type satisfying $p_{g} = 0$ also must satisfy $q=0$.  The minimal surfaces satisfying these conditions are categorized according to $K_{S}^{2}$, which is an integer satisfying $1 \leq K_{S}^{2} \leq 9$, and have Picard rank $10-K_{S}^{2}$.  It is interesting to look for examples where bigness holds or fails.

For such surfaces Theorem \ref{goodproducteff} shows:

\begin{cor}
Let $S$ be a smooth surface satisfying $p_{g}(S) = q(S) = 0$.  If $\Eff_{1}(S)$ is simplicial, then $\Delta_{S}$ is big if and only if every nef divisor is big.
\end{cor}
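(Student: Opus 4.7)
The plan is to apply Theorem \ref{goodproducteff} once I verify its hypothesis that
$$
\Eff_{k}(S \times S) = \sum_{i+j=k} \pi_{1}^{*}\Eff_{i}(S) \cdot \pi_{2}^{*}\Eff_{j}(S) \qquad \text{for every } k.
$$
Granting this, the theorem yields that $\Delta_{S}$ is big if and only if every nef class on $S$ is big. The nef cones $\Nef^{0}(S)$ and $\Nef^{2}(S)$ are one-dimensional, generated respectively by $[S]$ and $[\mathrm{pt}]$, which are big classes, so the condition reduces to the claimed statement about nef divisors.

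To obtain the outer-product decomposition I will check the three sufficient conditions listed in the second typical situation after Theorem \ref{goodproducteff}. First, the hypothesis $p_{g} = q = 0$ forces $H^{p,q}(S) = 0$ for $p \neq q$; by K\"unneth the same is true on $S \times S$, so all cohomology classes on $S \times S$ are of type $(p,p)$ and are algebraic. Poincar\'e duality then forces homological and numerical equivalence to agree on $S \times S$, giving $N_{k}(S \times S) = \bigoplus_{i+j=k} \pi_{1}^{*}N_{i}(S) \cdot \pi_{2}^{*}N_{j}(S)$. Second, $\Eff_{0}(S)$ and $\Eff_{2}(S)$ are trivially simplicial, and $\Eff_{1}(S)$ is simplicial by hypothesis. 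Third, any nef divisor on $S$ pulls back under every morphism to a nef, hence pseudoeffective, divisor, and the generator of $\Nef^{2}(S)$ is a complete intersection of two ample divisors; therefore every nef class on $S$ is universally pseudoeffective.

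With these in place, set $P_{k} := \sum_{i+j=k} \pi_{1}^{*}\Eff_{i}(S) \cdot \pi_{2}^{*}\Eff_{j}(S)$ and $M_{k} := \sum_{a+b=k} \pi_{1}^{*}\Nef^{a}(S) \cdot \pi_{2}^{*}\Nef^{b}(S)$. Tautologically $P_{k} \subseteq \Eff_{k}(S \times S)$. By the argument of Lemma \ref{bigimpliesbigupsef} applied factor-by-factor and using universal pseudoeffectivity, $M_{k} \subseteq \Nef^{k}(S \times S)$. Using the first condition to identify the ambient vector space with a tensor product, and the simplicial structure of the factor cones to pick mutually dual simplicial generators of $\Eff_{i}(S)$ and $\Nef^{i}(S)$, a direct pairing computation shows that the outer-product generators of $P_{k}$ and $M_{k}$ pair in a diagonal fashion; hence $P_{k}$ and $M_{k}$ are mutually dual simplicial cones. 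Therefore $\Eff_{k}(S \times S)$ is sandwiched between $P_{k}$ and the cone dual to $M_{k}$, which is $P_{k}$ itself, forcing equality. Theorem \ref{goodproducteff} then produces the stated biconditional.

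The main step requiring care is the mutual duality between $P_{k}$ and $M_{k}$. After choosing suitable simplicial generators, the pairing between outer-product generators on $S \times S$ factors as a product of pairings on $S$, so the diagonal form of the pairing matrix on $S$ propagates to $S \times S$ and identifies the two outer-product cones as duals under the tensor-product decomposition of $N_{k}(S \times S)$. This is exactly where the simplicial hypothesis on $\Eff_{1}(S)$ is essential.
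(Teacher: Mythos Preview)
Your proof is correct and follows exactly the route the paper intends: you verify the three conditions in the second bullet point after Theorem \ref{goodproducteff} (K\"unneth decomposition of $N_{k}(S\times S)$ from $p_g=q=0$, simpliciality of all $\Eff_i(S)$, and universal pseudoeffectivity of nef classes in codimensions $0,1,2$), and then reproduce the paper's duality argument showing these force the outer-product decomposition of $\Eff_k(S\times S)$. The reduction to nef \emph{divisors} at the end is also handled correctly.
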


However, determining bigness can still be subtle.

{$\mathbf{K_{S}^{2} = 9}$:} The surfaces here are exactly the fake projective planes, and we saw in the introduction that $\Delta_{S}$ is both big and nef.

Suppose we blow-up a very general point to obtain a surface $Y$.  The results of \cite{steffens98} on Seshadri constants show that $Y$ carries a divisor which is nef and has self-intersection $0$, so the diagonal for $Y$ is neither nef or big.

{$\mathbf{K_{S}^{2} = 8}$:} Since these surfaces have Picard rank $2$, the pseudoeffective cone is automatically simplicial.  Thus we have an interesting trichotomy of behaviors:
\begin{itemize}
\item $\Eff^{1}(S) = \Nef^{1}(S)$.  Let $D_{1}$, $D_{2}$ be generators of the two rays of the pseudoeffective cone, and set $a = D_{1} \cdot D_{2}$.  Then
\begin{equation*}
\Delta_{S} = F_{1} + F_{2} + \frac{1}{a}\pi_{1}^{*}D_{1} \cdot \pi_{2}^{*}D_{2} + \frac{1}{a}\pi_{1}^{*}D_{2} \cdot \pi_{2}^{*}D_{1}
\end{equation*}
is nef.  However, $\Delta_{S}$ is not big since $S$ carries a non-zero nef class with self-intersection $0$.
\item If exactly one extremal ray of $\Eff^{1}(S)$ is nef, then $S$ carries both a curve of negative self-intersection and a nef class with vanishing self-intersection.  Thus $\Delta_{S}$ is neither big nor nef.
\item If no extremal rays of $\Eff^{1}(S)$ are nef, then $\Delta_{S}$ is big by Lemma \ref{goodproducteff} but is not nef.
\end{itemize}
There are a few known geometric constructions of such surfaces.  First, there are the surfaces constructed explicitly via ball quotients which are classified in \cite{dzambic14} and \cite{lsv15}.  Second, there are the surfaces admitting a finite \'etale cover which is a product of two curves.  Such surfaces are classified in \cite{bcg08} and are further subdivided into two types.  Write $S = (C_{1} \times C_{2})/G$ for some finite group $G$ acting on the product.  If no element of $G$ swaps the two factors, then $G$ acts on each factor separately.  This is known as the ``unmixed'' case.  There is also the ``mixed'' case, when $C_{1} \cong C_{2}$ and some elements of $G$ swap the two factors.

In the unmixed case, we have $C_{1}/G \cong C_{2}/G \cong \mathbb{P}^{1}$, and we obtain two maps $S \to \mathbb{P}^{1}$ such that the pullbacks of $\mathcal{O}(1)$ generate the pseudoeffective cone.  In particular $\Delta_{S}$ is nef but not big.  However, we do not know what happens in the other two situations.

\textbf{Burniat surfaces}:  These are certain surfaces constructed as Galois covers of weak del Pezzos (see for example \cite{alexeev13}).  By pulling back from the del Pezzo we obtain nef divisors with self-intersection $0$, so that $\Delta_{S}$ is not big.

\textbf{The Godeaux surface}: This surface is the quotient of the Fermat quintic by a $\mathbb{Z}/5$ action.  This surface admits a morphism to $\mathbb{P}^{1}$ (see for example the second-to-last paragraph on page 3 of \cite{gp02}), so $\Delta_{S}$ is not big.

\subsubsection{Surfaces with non-vanishing genus}

We next discuss several classes of surfaces of general type where we can apply our results.  These examples have $p_g>0$, and so $\Delta_{S}$ cannot be big. 

We note that by the computations of \cite{bc13}, if $H$ is a very ample divisor on a surface $S$ then (maintaining the notation of Section \ref{hilbschemesection}) $H^{[2]} - B'$ is nef. 

\begin{exmple}[Surfaces in $\PP^3$] \label{surfacesinP3}
Suppose that $S$ is a smooth degree $d$ hypersurface in $\mathbb{P}^{3}$.  Then $c_{2}(S) = d^{3} - 4d^{2} + 6d$.  Thus Theorem \ref{hilb2thrm} shows that the diagonal is strongly numerically rigid and is not nef as soon as $d \geq 4$.
\end{exmple}

\begin{exmple}[Double covers]
Suppose that $S$ is a double cover of $\mathbb{P}^{2}$ ramified over a smooth curve of even degree $d$.  Then $S$ is of general type once $d \geq 8$.  These surfaces have $c_{2}(S) = d^{2} - 3d+6$ and carry a very ample divisor of degree $d$.  Thus Theorem \ref{hilb2thrm} shows that the diagonal is strongly numerically rigid and is not nef as soon as $d \geq 8$.
\end{exmple}

\begin{exmple}[Horikawa surfaces]
Minimal surfaces of general type satisfying $q(S) = 0$ and $K_{S}^{2} = 2p_{g}(S) - 4$ are known as Horikawa surfaces and are studied by \cite{horikawa76}.  (These surfaces are the boundary case of Noether's inequality.)  The canonical map for such a surface defines a $2:1$ morphism onto a rational surface.  While $K_{S}$ is not very ample, it is big and basepoint free, which is enough to determine that $K_{S}^{[2]} - B'$ is movable on $\Hilb^{2}(S)$.  Using the equality $c_2(S) = 12 + 12p_{g}(S) - K_{S}^{2}$, we see that the diagonal for such surfaces is never nef by Theorem \ref{hilb2thrm}.
\end{exmple}

\section{Higher dimensional examples}

\subsection{Quadric hypersurfaces}\label{quadricsection}

An odd dimensional quadric is a fake projective space and thus will have big and nef diagonal as discussed in the introduction.  An even dimensional quadric will have diagonal that is nef but not big. Indeed, if $X$ is a quadric of dimension $2k$, then $X$ carries two disjoint linear spaces of dimension $k$. These linear spaces are nef (since $X$ is homogeneous), but not big, and hence $\Delta_{X}$ is not big.

\subsection{Nefness for hypersurfaces}

Example \ref{surfacesinP3} shows that the diagonal of a smooth hypersurface in $\mathbb{P}^{3}$ of degree at least $3$ is not nef.  The same is true in arbitrary dimension:

\begin{prop}
Let $X \subset \mathbb{P}^{n+1}$ be a smooth degree $d$ hypersurface.  If $d\geq 3$ then the diagonal for $X$ is not nef.
\end{prop}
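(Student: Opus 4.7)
The plan is to split into two cases depending on the sign of the Euler characteristic $\chi(X)$, which admits the closed form
\[
\chi(X_d^n) = \frac{(1-d)^{n+2} - 1 + d(n+2)}{d}.
\]

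In the case $\chi(X) < 0$, I would invoke the self-intersection formula $[\Delta_X]^2 = \chi(X) < 0$: the diagonal is itself an effective $n$-cycle in $X \times X$ whose intersection with $\Delta_X$ is negative, so $\Delta_X$ is not nef. Using the closed form above, $\chi(X) < 0$ holds for every smooth hypersurface of odd dimension $n \geq 3$ and degree $d \geq 3$, so this handles all odd-dimensional hypersurfaces in a single line.

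In the case $\chi(X) > 0$, I would generalize the Grassmannian pencil computation of Example~\ref{quarticcomputation}. Let $\pi \colon W \to X \times X$ be the blow-up along $\Delta_X$ with exceptional divisor $E \cong \mathbb{P}(T_X)$, bundle projection $g \colon E \to X$, and relative hyperplane class $\xi$. Because any two distinct points of $\mathbb{P}^{n+1}$ span a unique line, the rational map $(p,q) \mapsto \overline{pq}$ extends to a morphism $\phi \colon W \to \mathrm{Gr}(2, n+2)$, and $D := H_1 + H_2 - E = \phi^* \mathcal{O}(1)$ is nef and base-point-free. Using the excess-intersection description $\pi^*[\Delta_X] = i_* c_{n-1}(\mathcal{Q})$ (for $\mathcal{Q} = g^* T_X / \mathcal{O}_E(-1)$), binomial expansion of $D^n$, and the Chern--Segre identity $c(T_X) \cdot s(T_X) = 1$, the mixed pushforwards $g_*(\xi^k \cdot c_{n-1}(\mathcal{Q}))$ vanish for $1 \leq k \leq n - 1$, so only the extreme $(H_1+H_2)^n$ and $(-E)^n$ terms contribute, giving
\[
D^n \cdot \pi^*[\Delta_X] = 2^n d - \chi(X).
\]
Whenever $\chi(X) > 2^n d$, the pushforward $\pi_* D^n$ is an effective $n$-cycle on $X \times X$ with negative intersection against $\Delta_X$, and $\Delta_X$ is not nef. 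Using the closed form, this inequality holds for every even-dimensional hypersurface of degree $d \geq 4$.

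The only remaining cases are the cubic hypersurfaces of even dimension ($d = 3$, $n$ even), for which the closed form gives $\chi(X) - 2^n d = (-5\cdot 2^n + 3n + 5)/3 < 0$, so the Grassmannian formula alone does not suffice. The cubic surface is handled at once by Proposition~\ref{nefimpliesnefcycles}, since smooth cubic surfaces are non-minimal and carry pseudoeffective $(-1)$-curves that are not nef. For cubic hypersurfaces of even dimension $n \geq 4$, I would construct an effective cycle on $X \times X$ from the Fano variety of lines $F(X)$: specifically, the incidence correspondence of collinear pairs, cut down to dimension $n$ by hyperplane classes, provides a candidate cycle whose intersection with $\Delta_X$ can be computed via excess intersection, with contribution controlled by Chern classes of the normal bundles $N_{\ell/X}$. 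The main obstacle I expect will be performing this excess-intersection computation uniformly and verifying that the outcome is strictly negative for every even-dimensional cubic.
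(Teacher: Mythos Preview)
Your odd-dimensional argument coincides with the paper's: when $n$ is odd, $\Delta_X^2 = c_n(X) < 0$ and nefness fails immediately.

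For even $n$, however, your proposal has a genuine gap. Your Grassmannian computation $D^n \cdot \pi^*[\Delta_X] = 2^n d - \chi(X)$ is correct (and the pushforward argument via $\pi_*(D^n) = (H_1+H_2)^n - [\Delta_X]$ is perhaps the cleanest way to see it), and it does dispose of all even-dimensional hypersurfaces with $d \geq 4$. But for even-dimensional cubics with $n \geq 4$ you only \emph{outline} a plan involving the Fano variety of lines and an unspecified excess-intersection calculation, explicitly flagging this as the ``main obstacle.'' That is not a proof; it is a sketch of where you would look next.

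The paper closes this gap in one line, and with a tool already on the table: Corollary~\ref{gonalitynefness}. Projection from a general point of $\mathbb{P}^{n+1}\setminus X$ is a finite degree-$d$ morphism $X \to \mathbb{P}^n$, so by that corollary $\Delta_X$ fails to be nef whenever $c_n(X) > (n+1)d$. For even $n$ one has
\[
c_n(X) = \frac{(d-1)^{n+2}-1}{d} + n + 2,
\]
and the inequality $c_n(X) > (n+1)d$ is straightforward to verify for all $d \geq 3$ and even $n \geq 4$, including the cubic case $d=3$ (where it reads $2^{n+2} > 6n+4$). Thus the paper's even-dimensional argument is both simpler than your Grassmannian approach and uniform in $d$, avoiding any separate treatment of cubics. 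The only boundary case is the cubic surface ($n=2$, $d=3$), where $c_2 = 9 = 3d$; this is handled separately (in the paper via Example~\ref{surfacesinP3}, in your proposal via the $(-1)$-curves and Proposition~\ref{nefimpliesnefcycles}, which is also fine).

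In short: replace your incomplete Fano-variety plan for even cubics, and indeed your entire Grassmannian argument for even $n$, with a direct appeal to Corollary~\ref{gonalitynefness} and the elementary inequality $c_n(X) > (n+1)d$.
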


\begin{proof}
The Euler characteristic of $X$ is
\begin{equation*}
c_n(X) = \frac{(1-d)^{n+2}-1}{d} + n + 2.
\end{equation*}
If $n$ is odd, then $\Delta_{X}^{2} = c_n(X) < 0$ and so $\Delta_{X}$ is not nef.  If $X$ has even dimension, then Corollary \ref{gonalitynefness} applied to the projection map from a general point outside $X$ shows that $\Delta_{X}$ is not nef. \end{proof}

\subsection{Bigness and rigidity for hypersurfaces}

Suppose that $X \subset \mathbb{P}^{n+1}$ is a smooth hypersurface of dimension $n$.  We will apply Lemma \ref{blowupcomputation} to show that the diagonal is not big for hypersurfaces of small dimension.  Note that homological non-bigness follows from Theorem \ref{qpg} once the degree is larger than $n+1$, so we will focus only on the Fano hypersurfaces.

Recall that Lemma \ref{blowupcomputation} requires us to find a class $\alpha \in N_{n}(X \times X)$ whose pullback under the blow-up of the diagonal $\phi: W \to X \times X$ has a special form.  We record the relevant information in the table below.  The first column records the kind of hypersurface.  The second column records the class $\alpha$ -- we will let $H_{1}$ and $H_{2}$ denote the pullback of the hyperplane class under the two projection maps.  

The third column verifies that Lemma \ref{blowupcomputation} applies to $\alpha$.  This is done by rewriting $\phi^{*}\alpha$ in terms of Schubert classes pulled back under the natural morphism $g: W \to G(2,n+2)$.  We will let $i: E \to W$ denote the inclusion of the exceptional divisor.  Identifying $E \cong \mathbb{P}(\Omega_{X})$ this divisor carries the class $\xi$ of the relative $\mathcal{O}(1)$ and  the pullback $H$ of the hyperplane class from the base of the projective bundle.  For convenience we let $h: E \to Gr(2,n+2)$ denote the restriction of $g$ to $E$. 

{\small 
\begin{center}
\begin{tabular}{c|c|c}
type & $\alpha$ & $\phi^{*}\alpha$ \\ \hline
cubic threefold & $H_{1}^{2}H_{2} + H_{1}H_{2}^{2} + \Delta_{X}$ & $g^{*}\sigma_{2,1} + i_{*}(h^{*}\sigma_{2} + 4H^{2})$ \\ \hline
quartic threefold & $2H_{1}^{2}H_{2} + 2H_{1}H_{2}^{2} + \Delta_{X}$ & $2g^{*}\sigma_{2,1} + i_{*}(h^{*}\sigma_{2} + 9H^{2})$ \\ \hline
cubic fourfold & $\begin{array}{c} H_{1}^{4} + H_{1}^{3}H_{2} + 3H_{1}^{2}H_{2}^{2} + \\ H_{1}H_{2}^{3} + H_{2}^{4} - \Delta_{X} \end{array}$ & $g^{*}\sigma_{4} + 2g^{*}\sigma_{2,2} + i_{*}(2h^{*}\sigma_{2,1} + 8H^{3})$  \\ \hline
quartic fourfold & $\begin{array}{c} H_{1}^{4} + H_{1}^{3}H_{2} + 7H_{1}^{2}H_{2}^{2} + \\ H_{1}H_{2}^{3} + H_{2}^{4}  - \Delta_{X} \end{array}$ & $g^{*}\sigma_{4} + 6g^{*}\sigma_{2,2} + i_{*}(3h^{*}\sigma_{2,1} + 24H^{3})$ \\ \hline
quintic fourfold & $\begin{array}{c} H_{1}^{4} + H_{1}^{3}H_{2} + 13H_{1}^{2}H_{2}^{2} + \\ H_{1}H_{2}^{3} + H_{2}^{4}  - \Delta_{X} \end{array}$ & $g^{*}\sigma_{4} + 12g^{*}\sigma_{2,2} + i_{*}(4h^{*}\sigma_{2,1} + 56H^{3})$ \\ \hline
cubic fivefold & $\begin{array}{c} H_{1}^{4}H_{2} + H_{1}^{3}H_{2}^{2} + \\ H_{1}^{2}H_{2}^{3} + H_{1}H_{2}^{4} + \Delta_{X} \end{array}$ & $g^{*}\sigma_{4,1} + i_{*}(h^{*}\sigma_{4} + 4 h^{*}\sigma_{2,2} + 12 H^{4})$ \\ \hline
quartic fivefold & $\begin{array}{c} 2H_{1}^{4}H_{2} + 11H_{1}^{3}H_{2}^{2} + \\ 11H_{1}^{2}H_{2}^{3} + 2H_{1}H_{2}^{4} + \Delta_{X}\end{array}$ & $\begin{array}{c} 2g^{*}\sigma_{4,1} + 9 H_{1}H_{2} g^{*}\sigma_{2,1} + \\ i_{*}(h^{*}\sigma_{4} + 9 h^{*}\sigma_{2,2} + 62 H^{4}) \end{array}$ \\ \hline
quintic fivefold & $\begin{array}{c} 3H_{1}^{4}H_{2} + 35H_{1}^{3}H_{2}^{2} + \\ 3H_{1}^{2}H_{2}^{3} + 3H_{1}H_{2}^{4} + \Delta_{X} \end{array}$ & $\begin{array}{c} 3g^{*}\sigma_{4,1} + 32 H_{1} g^{*}\sigma_{2,2} + \\ i_{*}(h^{*}\sigma_{4} + 16 h^{*}\sigma_{2,2} + 208 H^{4}) \end{array}$ \\ \hline
sextic fivefold & $\begin{array}{c} 4H_{1}^{4}H_{2} + 79H_{1}^{3}H_{2}^{2} + \\ 4H_{1}^{2}H_{2}^{3} + 4H_{1}H_{2}^{4} + \Delta_{X} \end{array}$ & $\begin{array}{c} 4g^{*}\sigma_{4,1} + 75 H_{1} g^{*}\sigma_{2,2} + \\ i_{*}(h^{*}\sigma_{4} + 25 h^{*}\sigma_{2,2} + 500 H^{4}) \end{array}$ \\ \hline
cubic sixfold & $\begin{array}{c} H_{1}^{6} + H_{1}^{5}H_{2} + 3H_{1}^{4}H_{2}^{2} + 3H_{1}^{3}H_{2}^{3} + \\ 3H_{1}^{2}H_{2}^{4} + H_{1}H_{2}^{5} + H_{2}^{6} - \Delta_{X} \end{array}$ & $\begin{array}{c} g^{*}\sigma_{6} + 2g^{*}\sigma_{4,2} + \\ i_{*}(2h^{*}\sigma_{4,1} + 8H h^{*}\sigma_{2,2} + 24H^{5}) \end{array}$
\end{tabular}
\end{center}
}
In all cases the class $\alpha$ satisfies $\alpha \cdot \Delta \leq 0$, and in all but the first the intersection is negative.  By Lemma \ref{blowupcomputation}, a smooth cubic threefold has non-big diagonal and in all other cases the diagonal is strongly numerically rigid.

It seems very likely that the same approach will work for all hypersurfaces of degree $\ge 3$ and dimension $\geq 3$.  Indeed, as the degree increases the coefficients are becoming more favorable.  We have verified strong numerical rigidity for several more cubic hypersurfaces but unfortunately the combinatorics become somewhat complicated.

\subsection{Grassmannians}
For a Grassmannian $X=Gr(k,n)$, the tangent bundle is globally generated, and hence the diagonal is nef. However, when $X$ is not a projective space, the diagonal is not big because the Schubert classes on $X$ are universally pseudoeffective but not big.

\subsection{Toric varieties}

\cite[Proposition 5.3]{fs09} shows that the only smooth toric varieties for which the pseudoeffective and nef cones coincide for all $k$ are products of projective spaces.  Thus products of projective spaces are the only toric varieties with $\Delta_{X}$ nef.  

Theorem \ref{goodproducteff} shows that for a toric variety $\Delta_{X}$ is big if and only if every nef class on $X$ is big.  Although it seems reasonable to expect that the only toric varieties with big diagonal are projective spaces, this turns out to be false.

\begin{exmple}
\cite{fs09} constructs an example of a toric threefold which does not admit a morphism to a variety of dimension $0 < d <3$.  In particular, this implies that every nef divisor is big.  Dually, every nef curve is big.  Thus $X$ has big diagonal.
\end{exmple}

Note that, aside from projective space, the diagonal can not be big for smooth projective toric varieties of Picard rank $\leq 3$ (or for threefolds of Picard rank $\leq 4$), since these varieties admit a non-constant morphism to a lower-dimensional variety \cite{rt15}.

\subsection{Varieties with nef tangent bundle}

Varieties with nef tangent bundle are expected to have a rich geometric structure.  A conjecture of Campana--Peternell \cite{cp91} predicts that such varieties are (up to \'etale cover) flat bundles with rational homogeneous fibers over their albanese variety.  To prove this result, it suffices by a result of \cite{dps94} to show that a Fano variety with nef tangent bundle is rational homogeneous.  This conjecture has been verified up to dimension 5 and in several related circumstances (see \cite{mok88}, \cite{hwang01}, \cite{mok02}, \cite{watanabe14}, \cite{kanemitsu15}, \cite{watanabe15}).

A variety with nef tangent bundle has nef diagonal.  However, the diagonal can only be big if the albanese map is trivial.

\begin{exmple}
A variety with nef tangent bundle need not carry an action of an algebraic group -- this can only be expected after taking an \'etale cover.  An explicit example is discussed in \cite[Example 3.3]{dps94}.

However, the diagonal can still be universally pseudoeffective in this situation.  Indeed, suppose we have an \'etale cover $f: Y \to X$ where $Y$ admits a transitive action of an algebraic group.  The induced flat finite map $f^{\times 2}: Y \times Y \to X \times X$ has that $f^{\times 2}_{*} \Delta_{Y} = d \Delta_{X}$.  Of course the diagonal of $Y$ is universally pseudoeffective and since universal pseudoeffectiveness is preserved by flat pushforward for smooth varieties by \cite[Theorem 1.8]{fl14} we deduce that $\Delta_{X}$ is also universally pseudoeffective.
\end{exmple}

\section{Threefolds}

In this section we discuss a couple classification results for threefolds.

\subsection{Fano threefolds}

The Fano threefolds which are fake projective spaces are are $\mathbb{P}^{3}$, the quadric, the Del Pezzo quintic threefolds $V_{5}$, and the Fano threefolds $V_{22}$; see \cite{ls86}.  As discussed above these will have big and nef diagonal.

Any Fano threefold with nef diagonal must be primitive and the contractions corresponding to the extremal rays of $\Eff_{1}(X)$ can not be birational.  Another obstruction to nefness is the fact that $\Delta_{X}^{2}$ is the topological Euler characteristic of $X$.  In addition to the fake projective spaces discussed earlier, the classification of \cite{AGVFANO} and \cite{Ott05} leaves $7$ possibilities:
\begin{itemize}
\item \textbf{Picard rank 1:} $V_{18}$, the intersection of two quadrics in $\mathbb{P}^{5}$.
\item \textbf{Picard rank 2:} a double cover of $\mathbb{P}^{2} \times \mathbb{P}^{1}$ with branch locus $(2,2)$, a divisor on $\mathbb{P}^{2} \times \mathbb{P}^{2}$ of type $(1,2)$ or $(1,1)$, $\mathbb{P}^{1} \times \mathbb{P}^{2}$.
\item \textbf{Picard rank 3:}  $\mathbb{P}^{1} \times \mathbb{P}^{1} \times \mathbb{P}^{1}$.
\end{itemize}
The threefolds on this list with nef tangent bundle are classified by \cite{cp91} and will automatically have nef diagonal: these are the products of projective spaces and the $(1,1)$ divisor in $\mathbb{P}^{2} \times \mathbb{P}^{2}$.   The divisor of type $(1,2)$ on $\mathbb{P}^{2} \times \mathbb{P}^{2}$ will also have nef diagonal, which we can see as follows.  Let $\{ D_{1}, D_{2} \}$ be the basis of $N_{2}(X)$ consisting of extremal nef divisors and $\{ C_{1}, C_{2} \}$ denote the basis of $N_{1}(X)$ consisting of extremal nef curves.  Since $h^{1,2}(X)=0$ we can write the diagonal very explicitly using products of these basis elements and it is easy to see that it is a non-negative sum of nef classes.  We are unsure of what happens in the remaining $3$ cases.

Any Fano threefold with big diagonal must have that the contractions corresponding to the extremal rays of $\Eff_{1}(X)$ are birational.  We focus on the primitive Fano threefolds; going through the classification of \cite{Ott05}, we see that any such threefold must have Picard rank $1$.  Unfortunately it seems subtle to determine which of these threefolds actually have big diagonal.

\subsection{Threefolds with nef and big diagonal}

\begin{prop}
Let $X$ be a smooth minimal threefold of Kodaira dimension $\geq 0$.  Then $X$ does not have homologically big diagonal.
\end{prop}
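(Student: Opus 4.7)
The plan is to assume $\Delta_X$ is homologically big and then deduce a numerical contradiction by combining the Hodge-theoretic constraints from Theorem \ref{qpg} with Hirzebruch--Riemann--Roch and Miyaoka's inequality for minimal threefolds.

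First I would apply Theorem \ref{qpg}: under the assumption that $\Delta_X$ is homologically big, $H^{k,0}(X)=0$ for every $k>0$. Hodge symmetry gives $h^{0,k}(X)=h^{k,0}(X)=0$ for $k=1,2,3$, and so
$$
\chi(\mathcal{O}_X)=1-h^{0,1}(X)+h^{0,2}(X)-h^{0,3}(X)=1.
$$
Next, Hirzebruch--Riemann--Roch for a smooth projective threefold gives $\chi(\mathcal{O}_X)=\tfrac{1}{24}\,c_1(X)\cdot c_2(X)=-\tfrac{1}{24}\,K_X\cdot c_2(X)$, so the previous step forces
$$
K_X\cdot c_2(X)=-24.
$$
On the other hand, since $X$ is smooth and minimal with $\kappa(X)\geq 0$, $K_X$ is nef, and Miyaoka's inequality (a consequence of his theorem on generic semi-positivity of $\Omega^1_X$ for minimal threefolds with nef canonical class) yields $K_X\cdot c_2(X)\geq 0$. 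This contradicts the displayed identity, so $\Delta_X$ cannot be homologically big.

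The main obstacle, and what makes the argument work uniformly across all $\kappa(X)\geq 0$, is identifying Miyaoka's inequality as the right external input. A more elementary approach via abundance for threefolds (producing a morphism $X\to Y$ with $\dim Y=\kappa(X)$ through $|mK_X|$ and then invoking Lemma \ref{bigimpliesbigupsef} applied to $K_X$, or Corollary \ref{bigmorcor}) handles the cases $\kappa(X)\leq 2$ cleanly, but breaks down for $\kappa(X)=3$, because $K_X$ itself is then big and no obvious nef, non-big divisor is available. Miyaoka's inequality circumvents this difficulty by turning the Hodge-theoretic vanishing from Theorem \ref{qpg} into a numerical contradiction through the Todd genus.
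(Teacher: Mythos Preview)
Your proof is correct and, in fact, more streamlined than the paper's. Both arguments begin identically: Theorem \ref{qpg} forces $\chi(\mathcal O_X)=1$, and Riemann--Roch then gives $c_1c_2=24$, i.e.\ $K_X\cdot c_2(X)=-24$. From here the paper splits into cases. For $\kappa(X)=0$ it observes that $c_1c_2=24$ forces $c_1\neq 0$, contradicting numerical triviality of $K_X$ (abundance); for $0<\kappa(X)<3$ it invokes the Iitaka fibration together with Corollary \ref{bigmorcor}; and for $\kappa(X)=3$ it applies the Miyaoka--Yau inequality $c_1^3\geq \tfrac{8}{3}c_1c_2$ to get $-K_X^3\geq 64$, contradicting bigness of $K_X$. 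You instead appeal once to Miyaoka's pseudoeffectivity of $c_2$ for minimal varieties (so $K_X\cdot c_2(X)\geq 0$ whenever $K_X$ is nef), which dispatches all Kodaira dimensions simultaneously. Your route trades the case analysis for a single, slightly less elementary external input; the paper's route keeps each step closer to standard classification tools but at the cost of handling the three ranges of $\kappa$ separately.
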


\begin{proof}
If $\Delta_{X}$ is homologically big, then $H^{k,0}(X)=0$ for all $k>0$. In particular, $\chi(\O_X)=1$. By Riemann--Roch, $\chi(\O_X)=\frac{1}{24}c_1c_2$, so we find that $c_1c_2=24$. In particular, $c_1(X)\neq 0$. If $0<\kappa(X)<3$, then $X$ admits a map to a lower-dimensional variety, contradicting bigness of $\Delta_{X}$. If $X$ has general type, we have by the Miayoka--Yau inequality, 
$$
0>-K^3=c_1^3\ge \frac83 c_1c_2=64
$$
This is a contradiction. 
\end{proof}

\begin{cor}
Let $X$ be a smooth threefold such that $\Delta_{X}$ is nef and homologically big.  Then $X$ is a fake projective space: $\mathbb{P}^{3}$, the quadric, the Del Pezzo quintic threefolds $V_{5}$, the Fano threefolds $V_{22}$.
\end{cor}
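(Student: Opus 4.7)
The plan is to show $X$ must be a Fano threefold of Picard rank one, then pin down its Betti numbers and identify it via the classification of Fano threefolds. By Corollary \ref{coro:upsefrank}(1), the nefness and homological bigness of $\Delta_X$ force $N^1(X)\cong \mathbb{R}$, so $X$ has Picard rank one and $K_X$ is proportional to the ample generator. If $K_X$ were trivial or ample, $X$ would be a smooth minimal threefold of Kodaira dimension $\geq 0$, contradicting the preceding proposition; hence $-K_X$ is ample and $X$ is Fano.

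With $X$ Fano of Picard rank one in hand, Theorem \ref{qpg} gives $h^{k,0}(X)=0$ for $k>0$, and together with $h^{2,0}=0$ and Picard rank one this yields $h^{1,1}=1$. The resulting Betti numbers are $b_0=b_2=b_4=b_6=1$, $b_1=b_5=0$, and $b_3=2h^{2,1}(X)$, so
\begin{equation*}
\chi(X) = 4 - 2\,h^{2,1}(X).
\end{equation*}

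The crux is to show $\chi(X)>0$, which by the Gauss--Bonnet identity $\chi(X)=\Delta_X\cdot\Delta_X$ reduces to a positivity estimate. Homological bigness provides a cohomological decomposition $[\Delta_X]=\varepsilon\,\alpha+[Z]$ with $\alpha$ a complete intersection of ample $\mathbb{Q}$-divisors on $X\times X$, $\varepsilon>0$, and $Z$ an effective $\mathbb{Q}$-cycle. Intersecting with $\Delta_X$,
\begin{equation*}
\Delta_X^2 = \varepsilon\,(\alpha\cdot\Delta_X) + Z\cdot\Delta_X.
\end{equation*}
The first term is strictly positive: restricting $\alpha$ to $\Delta_X\cong X$ yields a positive self-intersection of ample classes on $X$. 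The second term is nonnegative by nefness of $\Delta_X$. Therefore $\chi(X)>0$, forcing $h^{2,1}(X)\leq 1$.

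Finally I would consult the classification of Fano threefolds of Picard rank one (see, for example, \cite{AGVFANO}): none has $h^{2,1}=1$, so $h^{2,1}(X)=0$ and $X$ has the rational cohomology of $\mathbb{P}^3$. By \cite{ls86}, the only Fano threefolds with this property are $\mathbb{P}^3$, the smooth quadric, $V_5$, and $V_{22}$. The main obstacle is this final classification step: excluding $h^{2,1}=1$ directly from the positivity of $\Delta_X$ appears to require new input beyond Theorem \ref{qpg}, whose Hodge--Riemann construction only controls the outer Hodge groups $H^{k,0}$ and is ill-suited to $H^{2,1}$ (where the relevant Hodge--Riemann form has the wrong sign on $\sigma\wedge\bar\sigma$).
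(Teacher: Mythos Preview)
Your proof is correct and follows essentially the same route as the paper: reduce to a Fano threefold of Picard rank one via Corollary~\ref{coro:upsefrank} and the preceding proposition, use $\chi(X)=\Delta_X^2>0$ (from nefness plus homological bigness) to force $h^{2,1}\leq 1$, and finish with the classification. The only cosmetic difference is the order of steps---the paper first rules out $\kappa(X)\geq 0$ and then invokes uniruledness, whereas you establish Picard rank one first and case on the sign of $K_X$---and your closing remark about the difficulty of avoiding the classification is commentary rather than a gap.
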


\begin{proof}
Suppose first that $\kappa(X) \geq 0$.  Since $\Delta_{X}$ is nef, $X$ must be a minimal threefold.  We conclude by the previous proposition that the diagonal can not be big.

Thus we know that $X$ is uniruled.  Furthermore $N^1(X)=\mathbb R$, and so $X$ is a Fano threefold of Picard number 1. 
Note also that $\chi(X)=\Delta_{X}^2\ge 1$, so in particular $h^{2,1}$ is at most 1. Going through the classification of such Fano threefolds \cite{AGVFANO} reveals that the only possibilities are the fake projective 3-spaces.
\end{proof}

Recall that when $X$ is a threefold not of general type then numerical and homological equivalence coincide on $X \times X$.  Thus the corollary also classifies the threefolds not of general type which have nef and big diagonal.

\section{Cohomological decomposition of the diagonal and positivity} \label{blochconjsec}

This section discusses the relationship of Theorem \ref{bignessthrm} with the decomposition of the diagonal in cohomology on a surface. The following result is surely well-known to experts, but we include a proof for the convenience of the reader.

%
%
%
%
%

\begin{prop} \label{numbloch}
Let $S$ be a smooth projective surface.  Then:
\begin{enumerate}
\item $\Delta_{S}$ is homologous to a sum of cycles contracted by the projection maps if and only if $p_{g}(S) = 0$.
\item $\Delta_{S}$ is homologous to  a sum of products of pullbacks of classes from the two projections if and only if $p_{g}(S) = q(S) = 0$.
\end{enumerate}
\end{prop}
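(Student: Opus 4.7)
The plan is to analyze the K\"unneth decomposition
$$[\Delta_S] = \sum_{i=0}^{4} \delta_i, \qquad \delta_i \in H^i(S,\mathbb{Q}) \otimes H^{4-i}(S,\mathbb{Q}),$$
in which each $\delta_i$ corresponds, under the correspondence action, to the identity on $H^i(S,\mathbb{Q})$. In particular $\delta_0 = 1 \otimes [\mathrm{pt}]$ is the class of $S \times \{\mathrm{pt}\}$ and $\delta_4 = [\mathrm{pt}] \otimes 1$ is the class of $\{\mathrm{pt}\} \times S$.

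For the ``only if'' directions I will use Hodge theory to restrict which K\"unneth components can appear. A product cycle $A \times B$ on $S \times S$ with $\dim A + \dim B = 2$ has class only in $(H^0 \otimes H^4) \oplus (H^2 \otimes H^2) \oplus (H^4 \otimes H^0)$, with the $H^2 \otimes H^2$ piece a product of algebraic divisor classes, hence of Hodge type $(1,1)$ on each side. If $[\Delta_S]$ is such a sum of products, then $\delta_1 = \delta_3 = 0$, forcing $q(S) = 0$, and the projection of $\delta_2$ onto $H^{2,0}(S) \otimes H^{0,2}(S)$ must vanish; but by Serre duality this projection is the identity on $H^{2,0}(S)$, so $p_g(S) = 0$. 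For (1), a 2-cycle supported on $C \times S$ (with $C$ a curve) contributes to $H^2 \otimes H^2$ only through classes in $[C] \otimes H^2(S) \subset H^{1,1}_{\mathrm{alg}}(S,\mathbb{R}) \otimes H^2(S,\mathbb{R})$, and symmetrically for $\pi_2$-contracted cycles; the same Hodge-type argument again forces $p_g(S) = 0$.

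For the ``if'' direction of (2), Lefschetz $(1,1)$ together with $p_g = q = 0$ shows that $H^2(S,\mathbb{Q})$ is spanned by algebraic divisor classes, and for an algebraic Poincar\'e-dual basis $D_i, D_i^\vee$ one has $[\Delta_S] = [S \times \{\mathrm{pt}\}] + \sum_i [D_i \times D_i^\vee] + [\{\mathrm{pt}\} \times S]$. For the ``if'' direction of (1), assuming only $p_g(S) = 0$, the pieces $\delta_0, \delta_4, \delta_2$ are realized by the same decomposition (each $D_i \times D_i^\vee$ is contracted by $\pi_1$), and it remains to represent $\delta_1$ and $\delta_3$. I would choose a smooth very ample curve $V \subset S$ so that by Lefschetz hyperplane $j^*: H^1(S,\mathbb{Q}) \hookrightarrow H^1(V,\mathbb{Q})$ is injective, and dually $j_*: H^1(V,\mathbb{Q}) \twoheadrightarrow H^3(S,\mathbb{Q})$ is surjective. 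By semi-simplicity of polarizable rational Hodge structures this surjection admits a section of Hodge structures, corresponding to a rational $(1,1)$-class in $H^1(S) \otimes H^1(V) \subset H^2(S \times V, \mathbb{Q})$. Lefschetz $(1,1)$ on $S \times V$ then produces a rational divisor $D \subset S \times V$ representing this class, and pushing forward along $\mathrm{id} \times j$ yields a 2-cycle on $S \times S$ contracted by $\pi_2$ whose $H^1 \otimes H^3$ K\"unneth component is $\delta_1$, with any spurious contributions in $H^0 \otimes H^4$ or $H^2 \otimes H^2$ absorbed by the already-handled pieces. A symmetric construction on $V \times S$ realizes $\delta_3$.

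The hard part is this last construction of $\delta_1$ and $\delta_3$, which rests on combining Lefschetz hyperplane for $V \subset S$, Lefschetz $(1,1)$ on $S \times V$, and the semi-simplicity of the category of polarizable rational Hodge structures (due to Deligne). The remaining steps are formal consequences of K\"unneth, the Hodge decomposition, and (for the product decomposition in (2)) Lefschetz $(1,1)$ on $S$ itself.
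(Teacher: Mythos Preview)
Your argument is correct. The ``only if'' directions and the ``if'' direction of (2) match the paper's proof in content (the paper cites the Bloch--Srinivas argument as in \cite{voisinH} where you unpack the K\"unneth/Hodge-type analysis directly, but this is the same reasoning).

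The genuine divergence is in the ``if'' direction of (1). The paper does not construct the K\"unneth projectors $\delta_1,\delta_3$ directly. Instead it splits into cases by Kodaira dimension: for $\kappa(S)<2$ it invokes Bloch--Kas--Lieberman \cite{bkl76}, who use the classification of surfaces to find a curve $C\subset S$ with $\CH_0(C)\to\CH_0(S)$ surjective, and then applies the Bloch--Srinivas decomposition \cite{bs83}; for $\kappa(S)=2$ it uses Castelnuovo's theorem ($p_g=0\Rightarrow q=0$) to reduce to (2). Your route is a uniform Hodge-theoretic construction: choose a smooth ample curve $V\subset S$, produce a Hodge-theoretic splitting of $j_*\colon H^1(V,\mathbb{Q})\twoheadrightarrow H^3(S,\mathbb{Q})$ (equivalently, invert the Hard Lefschetz isomorphism $\cup[V]\colon H^1(S)\to H^3(S)$ and compose with $j^*$), and apply the Lefschetz $(1,1)$ theorem on the threefold $S\times V$ to get an algebraic divisor whose pushforward realizes $\delta_1$. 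This avoids surface classification entirely and is more self-contained; the paper's approach, by contrast, actually yields a Chow-theoretic decomposition in the $\kappa<2$ range (stronger than what the proposition asks for) and ties the statement to the classical decomposition-of-the-diagonal literature.
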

\begin{proof}
We first prove (2).  Note that $\Delta$ is homologous to a sum $Z+Z'$ where $Z$ is a cycle supported in a divisor $D\times S$ and $Z'$ is supported on a fiber $S\times s$ of the projection map. By a Bloch--Srinivas type argument (as in \cite[Theorem 10.17]{voisinH}), we find that $p_g(S)=q(S)=0$. For the reverse implication, the K\"unneth formula implies that all of $H^{2,2}(S \times S)$ is algebraic and generated by products of pullbacks of divisors from the two projections. 

Now we prove (1).  The forward implication follows again from the argument of \cite[Theorem 10.17]{voisinH}.  Conversely, the arguments of \cite{bkl76} using the classification theory of surfaces show that for surfaces with $\kappa(S) < 2$ and $p_{g}(S) = 0$ there is a curve $C \subset S$ such that $\CH_{0}(C) \to \CH_{0}(S)$ is surjective.  Using the decomposition of the diagonal as in \cite{bs83}, we see that the reverse implication holds except possibly for surfaces of general type.  But a surface of general type satisfying $p_{g}(S) = 0$ also satisfies $q(S) = 0$ by Castelnuovo's theorem.  Thus we are reduced to (2).
\end{proof}


It is natural to ask whether one can obtain a tighter link between positivity and decompositions of $\Delta_{X}$ than Theorem \ref{qpg}.  Following an idea of \cite{djv13}, we will prove such a statement for surfaces by perturbing the diagonal by an external product of ample divisors.

\begin{prop}
Let $S$ be a smooth projective surface.  Then $p_{g}(S) = 0$ if and only if there is an ample divisor $H$ such that $\Delta_{S} + \pi_{1}^{*}H \cdot \pi_{2}^{*}H$ is big.
\end{prop}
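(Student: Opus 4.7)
The plan is to prove both directions. The easy $(\Leftarrow)$ follows from adapting the Hodge--Riemann argument of Theorem \ref{qpg}, while $(\Rightarrow)$ combines the cohomological decomposition in Proposition \ref{numbloch}(1) with a direct absorption of negative contributions into $\pi_1^*H \cdot \pi_2^*H$ for $H$ sufficiently ample.

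For $(\Leftarrow)$, suppose $p_g(S) > 0$ and fix a nonzero $\sigma \in H^{2,0}(S)$. Form the weakly positive $(2,2)$-form
\[
\beta = (\pi_1^*\sigma - \pi_2^*\sigma) \wedge (\pi_1^*\bar\sigma - \pi_2^*\bar\sigma)
\]
on $S\times S$. The restriction $\beta|_{\Delta_S}$ vanishes, so $\beta \cdot \Delta_S = 0$. The new observation is that $\beta \cdot \pi_1^*H \cdot \pi_2^*H = 0$ for any $H$ as well: expanding, each of the four summands of $\beta \wedge \pi_1^*H \wedge \pi_2^*H$ factors as an integral over $S$ of either $\sigma \wedge \bar\sigma \wedge H$ or $\sigma \wedge H$ (and its conjugate), both of which vanish by degree since $\dim S = 2$. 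Hence $\beta \cdot (\Delta_S + \pi_1^*H \cdot \pi_2^*H) = 0$. On the other hand, if the class were big, writing it as $\epsilon h^2 + [E]$ for $h$ ample on $S\times S$ and $E$ effective would give $\beta \cdot h^2 > 0$ by Hodge--Riemann together with $\beta \cdot E \geq 0$, a contradiction.

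For $(\Rightarrow)$, assume $p_g(S) = 0$. Proposition \ref{numbloch}(1) yields a cohomological decomposition $[\Delta_S] = [Z_1] + [Z_2]$ with $Z_1$ supported on $S \times C$ and $Z_2$ on $D \times S$ for some curve $C$ and divisor $D \subset S$. Combining this with the K\"unneth structure on the ambient 3-folds $S\times C$ and $D\times S$ (and using Lieberman's Lefschetz standard conjecture on surfaces to ensure algebraicity of all relevant K\"unneth components), we can write
\[
[\Delta_S] = \pi_1^*[\mathrm{pt}] + \pi_2^*[\mathrm{pt}] + \sum_i c_i\, \pi_1^*\alpha_i \cdot \pi_2^*\beta_i + \tau,
\]
where $\alpha_i, \beta_i \in N^1(S)$, $c_i \in \mathbb{Q}$, and $\tau$ is a ``correspondence'' term arising from $H^1(S) \otimes H^1(C)$ and $H^1(D) \otimes H^1(S)$ (vanishing whenever $q(S) = 0$).

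Writing each $\alpha_i = \alpha_i^+ - \alpha_i^-$, $\beta_i = \beta_i^+ - \beta_i^-$ as differences of effective classes, the decomposition expresses $[\Delta_S]$ as an effective combination minus finitely many effective product classes $\pi_1^*\alpha \cdot \pi_2^*\beta$. These negative contributions are absorbed into a sufficiently large multiple of $\pi_1^*H \cdot \pi_2^*H$ via the algebraic identity
\[
\pi_1^*H \cdot \pi_2^*H - \pi_1^*\alpha \cdot \pi_2^*\beta = \pi_1^*(H - \alpha) \cdot \pi_2^*(H - \beta) + \pi_1^*(H - \alpha) \cdot \pi_2^*\beta + \pi_1^*\alpha \cdot \pi_2^*(H - \beta),
\]
effective whenever $H - \alpha, H - \beta$ are effective. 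Since replacing $H$ by $NH$ multiplies $\pi_1^*H \cdot \pi_2^*H$ by $N^2$, we have arbitrarily large coefficients available, and for $H$ sufficiently ample all negative terms can be absorbed. Combined with the positivity of the fiber contributions $\pi_1^*[\mathrm{pt}] + \pi_2^*[\mathrm{pt}]$, this shows that $[\Delta_S] + \pi_1^*H \cdot \pi_2^*H - \epsilon(\pi_1^*H + \pi_2^*H)^2$ remains pseudoeffective for small $\epsilon > 0$, giving bigness. The main obstacle is treating $\tau$ when $q(S) > 0$: as an algebraic $(2,2)$-class it decomposes into combinations of graph-type 2-cycles on $S\times C$ and $D\times S$, which must be shown to admit domination by $N\cdot\pi_1^*H \cdot \pi_2^*H$ for $H$ sufficiently ample via an analogous, if more delicate, absorption argument.
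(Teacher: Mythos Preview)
Your $(\Leftarrow)$ direction is essentially the paper's argument and is correct.

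For $(\Rightarrow)$, your approach diverges from the paper's and leaves a genuine gap precisely where you flag it. You pass from the decomposition of Proposition \ref{numbloch}(1) to a K\"unneth decomposition
\[
[\Delta_S] = \pi_1^*[\mathrm{pt}] + \pi_2^*[\mathrm{pt}] + \sum_i c_i\,\pi_1^*\alpha_i \cdot \pi_2^*\beta_i + \tau,
\]
and your absorption identity disposes of the product terms cleanly. But $\tau$, the component in $H^1(S)\otimes H^3(S)\oplus H^3(S)\otimes H^1(S)$, is not of product type, and you do not actually show it can be dominated by $N\,\pi_1^*H\cdot\pi_2^*H$; you only assert that an ``analogous, if more delicate'' argument should exist. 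When $q(S)>0$ this is the entire content of the implication. The difficulty is real: the K\"unneth projection of an algebraic class is algebraic here (Lefschetz holds for surfaces), but that gives no control on the effective cycles needed to represent $\tau$, and no reason those cycles are bounded by a class of the special shape $\pi_1^*H\cdot\pi_2^*H$. Your sentence about ``graph-type $2$-cycles on $S\times C$'' does not help: extracting the $H^1\otimes H^1$ K\"unneth piece of a divisor class on $S\times C$ and then representing it effectively is exactly the missing step.

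The paper sidesteps this by \emph{not} passing to K\"unneth components. It keeps the decomposition of Proposition \ref{numbloch}(1) at the level of irreducible surfaces $E_i$ contracted to curves $C_i$ by $\pi_1$ (and symmetrically $E_j'$ for $\pi_2$). Each $E_i$ lives on the threefold $D_i=C_i\times S$, and the key observation is that $a_iE_i+(c_i+\epsilon)F_1$ is $\pi_2$-\emph{relatively ample} as a divisor on $D_i$; hence adding a large $\pi_2|_{D_i}^*H_i$ makes it effective on $D_i$, and one pushes forward. Summing over $i,j$ yields $\Delta_S+N$ equal to positive multiples of the fibers plus an effective class, where $N$ is a positive sum of external products of amples, which is then dominated by a single $\pi_1^*H\cdot\pi_2^*H$. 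The relative-ampleness step is what absorbs the odd-cohomology contribution without ever isolating it, and is the idea your argument is missing.
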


\begin{proof}
We first prove the forward implication.  By Proposition \ref{numbloch}, we have an equality of numerical classes
\begin{equation*}
\Delta_{S} = a_{0}F_{1} + b_{0}F_{2} + \sum_{i=1}^{r_{1}} a_{i}E_{i} + \sum_{j=1}^{r_{2}} b_{j}E_{j}'
\end{equation*}
where $a_{i},b_{j} \in \mathbb{Q}$, each $E_{i}$ is an irreducible surface contracted to a curve by $\pi_{1}$, and each $E_{j}'$ is an irreducible surface contracted to a curve by $\pi_{2}$.  Note that
\begin{equation*}
a_{0} + \sum_{i=1}^{r_{1}} a_{i}E_{i} \cdot F_{2} = 1 \qquad \qquad b_{0} + \sum_{j=1}^{r_{2}} b_{j}E_{j}' \cdot F_{1} = 1
\end{equation*}
For each $i$, let $C_{i}$ denote the normalization of the image $\pi_{1}(E_{i})$ and let $D_{i}$ denote $C_{i} \times S$.  For notational convenience, we will omit the normalization and write $C_{i}$ and $D_{i}$ as if they were subvarieties of $S$ and $S \times S$.

Fix a small $\epsilon > 0$ satisfying $\epsilon < 1/r_{1}$ and $\epsilon < 1/r_{2}$.  Set $c_{i} = -a_{i}E_{i} \cdot F_{2}$ so that
\begin{equation*}
a_{i}E_{i} + \left(c_{i}+ \epsilon \right) F_{1}
\end{equation*}
 is $\pi_{2}$-relatively ample as a divisor on $D_{i}$.  Thus, for some sufficiently large ample $H_{i}$ on $S$, we have that
\begin{equation*}
a_{i}E_{i} +\left( c_{i} +  \epsilon \right) F_{1} + \pi_{2}|_{D_{i}}^{*}H_{i}
\end{equation*}
is an effective class on $D_{i}$.  Pushing forward to $S \times S$ and adding up as $i$ varies, we see that
\begin{equation*}
\left( r_{1}\epsilon + a_{0} - 1 \right) F_{1} + \sum_{i=1}^{r_{1}} \left( a_{i}E_{i} + \pi_{1}^{*}C_{i} \cdot \pi_{2}^{*}H_{i} \right)
\end{equation*}
is an effective class on $S \times S$.  Arguing symmetrically, with analogous notation,
\begin{equation*}
\left( r_{2}\epsilon + b_{0} - 1 \right) F_{2} + \sum_{j=1}^{r_{2}} \left( b_{j}E_{j}' + \pi_{1}^{*}H_{j}' \cdot \pi_{2}^{*}C_{j}' \right)
\end{equation*}
is an effective class.  Of course, we can replace the $C_{i}, H_{i}, C_{j}', H_{j}'$ by larger ample divisors without affecting the effectiveness of this class.  All told, there is an effective surface class $Q$ and a positive sum of external products of ample divisors $N$ such that
\begin{equation*}
\Delta_{S} + N = (1-r_{1}\epsilon)F_{1} + (1-r_{2}\epsilon)F_{2} + Q.
\end{equation*}
Adding on a further external product of amples to both sides we can ensure that the right hand side is big: if $A$ is ample on $X$, a class of the form $c_{1}F_{1} + c_{2}F_{2} + c_{3}\pi_{1}^{*}A \cdot \pi_{2}^{*}A$ with positive coefficients will dominate a small multiple of the big class $(\pi_{1}^{*}A + \pi_{2}^{*}A)^{2}$.  Finally, any positive sum of external products of amples is dominated by a single external product of amples, finishing the proof of the implication.

Conversely, suppose $h^{2,0}(X) > 0$ and let $\alpha \in H^{2,0}(X)$ be non-zero.  Let $\beta$ be the nef class constructed as in Theorem \ref{qpg} satisfying $\beta \cdot \Delta_{S} = 0$.  This $\beta$ also satisfies $\beta \cdot \pi_{1}^{*}H \cdot \pi_{2}^{*}H=0$.  Since $\beta$ is nef, there can be no big class of the desired form (again appealing to the equality of homological and numerical equivalence).
\end{proof}

\section{Questions}\label{questions}

We finish with a list of questions raised by our work.

\begin{ques}
Are $\mathbb{P}^{2}$ and fake projective planes the only smooth surfaces with big diagonal?
\end{ques}

\begin{ques}
Suppose that $S$ is a smooth surface $S$ of general type with $q(S) = 0$ and $p_g(S) > 0$.  Is the diagonal for $S$ numerically rigid?
\end{ques}

It is natural to ask whether some of the results for surfaces generalize for higher dimensions:

\begin{ques}
Does a smooth projective variety with big and nef diagonal have the same rational cohomology as projective space?
\end{ques}

\begin{ques}
Are the only smooth projective varieties with big diagonal either uniruled or of general type?
\end{ques}

\begin{ques}
Is there a threefold of general type with nef diagonal?
\end{ques}

\begin{ques}
Are there any topological restrictions on smooth varieties with nef diagonal aside from $c_{n}(X) \geq 0$?  For example, does a threefold with nef diagonal satisfy $\chi(\mathcal{O}_{X}) \geq 0$?
\end{ques}

\bibliographystyle{alpha}
{
\bibliography{posdiag}
}
%
%
%
%
%
%

\end{document}